\documentclass{article}
\usepackage[margin=35mm]{geometry}
\usepackage{amsthm,amsmath,amssymb}
\usepackage[pdf]{pstricks} %for including an .eps file

\usepackage{upgreek}
\usepackage{mathtools}  %provides \coloneqq
\usepackage{tabularx}   %provide \extrarowheight for vertical stretching of tables
\usepackage{xfrac}
\linespread{1.2}

%{{{tikz
\usepackage{tikz}
\usepackage{pgfplots}
\usetikzlibrary{matrix,shapes,arrows,positioning,shadows,backgrounds}
%}}}
%{{{ redefinition of \qed, for arXiv

%}}}

%{{{ custom numbered theorems
%%%%%%%%%%%%%%%%%%%%%%%%%%%%%%%%%%%%%%%%%%%%%%%%%%%%%%%%%%%%%%%%%%%%%%%%
% use custom numbering for theorems in the introduction.
% http://tex.stackexchange.com/questions/53978/custom-theorem-numbering
%%%%%%%%%%%%%%%%%%%%%%%%%%%%%%%%%%%%%%%%%%%%%%%%%%%%%%%%%%%%%%%%%%%%%%%%

\newenvironment{introtheorem}[1]
  {\intro}
  {\endintro}
%}}} custom numbered theorems

 %small space before \digitsum
\newcommand{\eqdef}{\coloneqq}

\newcommand{\C}{\mathbb{C}}
\newcommand{\Z}{\mathbb{Z}}
\newcommand{\N}{\mathbb{N}}

\newcommand{\NI}{\overline{\mathbb{N}}_+}

\def\bl{\hspace{0.5pt}\underline{\hphantom{\hspace{0.6em}}}\hspace{0.5pt}}	%bottom line

%load hyperref always as the last package
\usepackage{hyperref}

\newtheorem*{theorem*}{Theorem}
\newtheorem{theorem}{Theorem}
\newtheorem{proposition}[theorem]{Proposition}
\newtheorem{corollary}[theorem]{Corollary}
\newtheorem{lemma}[theorem]{Lemma}
\newtheorem{conjecture}[theorem]{Conjecture}

        %extra counter for named conjectures.

\theoremstyle{remark}
\newtheorem{remark}{Remark}

\newtheorem*{example}{Example}

\theoremstyle{definition}
\newtheorem{definition}{Definition}
\newtheorem*{problem}{Problem}

\newcommand{\DeltaC}[2]{\Delta_{#1}(#2)}

\newcommand{\bfk}{\mathbf k}
\newcommand{\bfl}{\boldsymbol{\ell}}
\newcommand{\bftau}{\boldsymbol{\tau}}
\newcommand{\bft}{\mathbf t}
\newcommand{\teven}{\bft_{\!\textsf{e}}}
\newcommand{\todd}{\bft_{\!\textsf{o}}}

\newcommand{\compPrefactor}{\mathfrak g}
\newcommand{\comp}{\mathfrak C}

\newcommand{\tO}{\mathtt 0}                            % typewriter 0
\newcommand{\tL}{\mathtt 1}                            % typewriter 1

\newcommand{\digitsum}{\mathsf s}

\DeclareMathOperator{\e}{\mathrm{e}}                   % exp(ix)
\DeclareMathOperator{\realpart}{\mathrm{Re}}
\DeclareMathOperator{\imagpart}{\mathrm{Im}}                   % exp(ix)
            % asymptotic density
\DeclareMathOperator{\LandauO}{\mathcal O}             % Big O-notation

\newcommand{\gammaINF}{\mathfrak b}
\newcommand{\rsum}{\mathfrak B}

\title{Decomposing the sum-of-digits correlation measure}

\author{
\begin{tabular}{cc}
\begin{tabular}{c}
Bartosz Sobolewski\\
Leoben and Krak\'ow
\end{tabular}
&
\begin{tabular}{c}Lukas Spiegelhofer\\
Leoben
\end{tabular}
\end{tabular}
}
\date{}
\pgfplotsset{compat=1.18}
\begin{document}
\maketitle
\begin{abstract}
Let $\mathsf s(n)$ denote the number of ones in the binary expansion of the nonnegative integer $n$.
How does $\mathsf s$ behave under addition of a constant $t$?
In order to study the differences
\[\mathsf s(n+t)-\mathsf s(n),\]
for all $n\ge0$, we consider the associated characteristic function $\gamma_t$.
Our main theorem is a structural result on the decomposition of $\gamma_t$ into a sum of \emph{components}.
We also study in detail the case that $t$ contains at most two blocks of consecutive $\mathtt 1$s.
The results in this paper are motivated by  \emph{Cusick's conjecture} on the sum-of-digits function.
This conjecture is concerned with the \emph{central tendency} of the corresponding probability distributions,
 and is still unsolved.
\end{abstract}

\renewcommand{\thefootnote}{\fnsymbol{footnote}}
\footnotetext{\emph{2020 Mathematics Subject Classification.} Primary: 11B65, 11A63; Secondary: 05A20,11T71}
%   11B65: Binomial coefficients; factorials; $q$-identities
%   11A63: Radix representation; digital problems {For metric results, see 11K16}
%   05A20:     Combinatorial inequalities
%   11T71:     Algebraic coding theory; cryptography
\footnotetext{\emph{Key words and phrases.}  Sum of digits, valuation of binomial coefficients, Cusick's conjecture, Hamming weight,}
\footnotetext{
The authors were supported by the FWF project P36137-N.\\
Lukas Spiegelhofer acknowledges support by the FWF--ANR projects ArithRand (grant numbers I4945-N and ANR-20-CE91-0006),
and SymDynAr (I6750 and ANR-23-CE40-0024)
}
\renewcommand{\thefootnote}{\arabic{footnote}}

\maketitle

\smallskip\noindent

\section{Introduction}

The fundamental object we consider in this paper is the \emph{binary sum of digits} $\digitsum(n)$ of a nonnegative integer $n$, which is really just the sum of the binary digits of $n$: if $\varepsilon_0,\ldots,\varepsilon_{\nu-1}\in\{0,1\}$, then
\begin{equation*}
\digitsum\Biggl(\sum_{0\leq j<\nu}\varepsilon_j2^j\Biggr)=
\sum_{0\leq j<\nu}\varepsilon_j.
\end{equation*}
The value $\digitsum(n)$ is also given by
\begin{enumerate}

    \item the minimal number of powers of two needed to write $n$ as their sum;
    \item the expression $n-\nu_2(n!)$, where $\nu_2$ is the $2$-adic valuation function.
\end{enumerate}
The first equivalence can be seen by noting that the binary expansion of $n$ is the lexicographically largest tuple $(\delta_0,\ldots,\delta_{\nu-1})$ of nonnegative integers such that
$n=\sum_{0\leq j<\nu}\delta_j2^j$.
The equivalence to the second description goes back to Legendre, who stated the equivalent formulation
\[\nu_2(n!)=\sum_{i\ge1}\biggl\lfloor \frac n{2^i}\biggr\rfloor,\]
also valid for arbitrary primes in place of $2$ (``Legendre's formula'').
Applying Legendre's formula thrice, we obtain the important identity
\begin{equation*}
\digitsum(n)-\digitsum(n+t)=\nu_2\biggl(\!\binom{n+t}t\!\biggr)-\digitsum(t)
\end{equation*}
for integers $n,t\ge0$.
This intimate connection between
\begin{itemize}
\item[(a)] the \emph{$2$-valuation of binomial coefficients} and
\item[(b)] \emph{correlations of the binary sum-of-digits function}
\end{itemize}
is a major motivation for the study of the topics set out in the present paper.
Related to this, we wish to point out
the well-known identity
\begin{equation*}
\sum_{n\ge0}\binom{n+t}{t}x^n
=
\biggl(\frac1{1-x}\biggr)^{t+1}
\quad (t\ge0),
\end{equation*}
which provides a link to the study of \emph{powers of formal power series} (see Drmota~\cite{Drmota1994}, and Ulas~\cite{Ulas2019}).

In the article~\cite{EmmePrikhodko2017}, a probability measure on $\mathbb Z$ is defined starting from the differences $\digitsum(n+t)-\digitsum(n)$, see also~\cite{MorgenbesserSpiegelhofer2012}.
\begin{definition}
Assume that $t\ge0$ and $j$ are integers.
Define a probability measure on $\mathbb Z$ by
\begin{equation*}
\mu_t(j)\eqdef\lim_{N\to\infty}\frac1N\#\bigl\{
0\leq n<N: \digitsum(n+t)-\digitsum(n)=j
\bigr\},
\end{equation*}
and the characteristic function
\begin{equation*}
\gamma_t(\vartheta)\eqdef\sum_{j\in\mathbb Z}\mu_t(j)\exp(ij\vartheta)
=\sum_{n\ge0}\exp\bigl(i\vartheta(\digitsum(n+t)-\digitsum(n))\bigr).
\end{equation*}
\end{definition}
By B\'esineau~\cite{Besineau1972}, the limits appearing in this definition actually exist.

\medskip\noindent
\emph{Cusick's conjecture} is a statement on the structure of the probability distribution defined by $\mu_t$.
T.~W.~Cusick originally posed this as a question in 2011, and upgraded it to ``conjecture'' later.
\begin{conjecture}\label{conj_cusick}
For all $t\ge0$ we have
\begin{equation*}
c_t>\frac12,\quad\mbox{where}\quad
c_t\eqdef\sum_{j\ge0}\mu_t(j).
\end{equation*}
\end{conjecture}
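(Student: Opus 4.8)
The plan is to recast the conjecture in terms of carries and then to extract $c_t$ from the characteristic function by Fourier inversion, the point at which the decomposition of $\gamma_t$ becomes the essential tool. Combining the displayed identity $\digitsum(n)-\digitsum(n+t)=\nu_2\binom{n+t}{t}-\digitsum(t)$ with Kummer's theorem, the difference $\digitsum(n+t)-\digitsum(n)$ equals $\digitsum(t)-C(n,t)$, where $C(n,t)=\nu_2\binom{n+t}{t}$ counts the carries produced when $n$ and $t$ are added in base $2$. Hence
\[
c_t=\dens\{n\ge0:\ C(n,t)\le\digitsum(t)\}.
\]
A short telescoping computation shows that $\mu_t$ has mean zero, so $\sum_j j\,\mu_t(j)=0$ forces the mean carry count to equal $\digitsum(t)$ exactly. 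In this language Cusick's conjecture asserts that the carry count stays at or below its own mean for a set of inputs of density strictly more than one half; equivalently, the carry distribution is \emph{right-skewed}, the rare cascades of carries (long runs of $\mathtt1$s in $n$ meeting $t$) producing a thin but heavy upper tail that is compensated by a slight majority of low-carry inputs.

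First I would turn this into a sign condition on an integral. Writing $2c_t-1=\mu_t(0)+\sum_{j\ge1}\bigl(\mu_t(j)-\mu_t(-j)\bigr)$, inserting the Fourier inversion $\mu_t(j)=\frac{1}{2\pi}\int_{-\pi}^{\pi}\gamma_t(\vartheta)\exp(-ij\vartheta)\,d\vartheta$, and summing the odd part against the Abel-regularized kernel $\sum_{j\ge1}\sin(j\vartheta)=\tfrac12\cot(\vartheta/2)$, I obtain
\[
2c_t-1=\mu_t(0)+\frac{1}{2\pi}\int_{-\pi}^{\pi}\imagpart\gamma_t(\vartheta)\,\cot\!\Bigl(\frac{\vartheta}{2}\Bigr)\,d\vartheta .
\]
The mean-zero property gives $\imagpart\gamma_t(\vartheta)=\LandauO(\vartheta^{3})$ near the origin, so the singularity of the kernel is removable and the integral converges absolutely. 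Cusick's conjecture is thus equivalent to the statement that the integral term on the right-hand side exceeds $-\mu_t(0)$.

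The decisive step is to substitute the structural decomposition that is the main theorem of the paper, writing schematically $\gamma_t=\sum_{\alpha}\compPrefactor_{\alpha}\,\comp_{\alpha}$ as a finite sum of components governed by the block structure of $t$, and then integrating term by term against $\cot(\vartheta/2)$. I would first isolate the principal component, which carries the bulk of the mass near $\vartheta=0$, and show that its integral against the kernel, together with the $\mu_t(0)$ term, already yields a strictly positive value for $2c_t-1$; this is precisely where the right-skewness of the carry distribution should surface as a definite positive sign. The remaining components, indexed by the finer block data of $t$, would then be estimated and shown to be of lower order or of controllable sign. When $t$ has only one or two blocks of consecutive $\mathtt1$s the number of components is small and the corresponding integrals can be evaluated essentially explicitly, which is presumably why that case is singled out for detailed treatment here.

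The main obstacle I anticipate is \emph{uniformity in $t$}. The number of components grows with the number of blocks of $\mathtt1$s in $t$, and the cancellations among the subdominant components need not individually have a fixed sign, so controlling all of them simultaneously for every $t$ --- rather than asymptotically, or under a bound on the block count --- is exactly the gap that leaves the full conjecture open. A secondary difficulty is the oscillatory, only conditionally convergent nature of the kernel sum, which forbids naive term-by-term majorization and demands that the regularization be carried through uniformly across the decomposition.
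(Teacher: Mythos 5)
You have not given a proof, and none exists: the statement you are addressing is Cusick's conjecture itself, which the paper explicitly leaves open (``is still unsolved''). Your sketch correctly rederives two pieces of machinery that the paper does develop --- the reformulation via carries through Kummer's theorem, and the cotangent identity $2c_t-1=\mu_t(0)+\frac{1}{2\pi}\int_{-\pi}^{\pi}\imagpart\gamma_t(\vartheta)\cot(\vartheta/2)\,d\vartheta$, which is exactly Lemma~\ref{lem_cot} of the paper --- and it correctly identifies the component decomposition of Theorem~\ref{thm_components} as the natural tool to insert into that integral. But the step you label ``decisive'' is precisely the step that is missing, both in your proposal and in the paper: you assert that the principal component together with $\mu_t(0)$ ``already yields a strictly positive value'' and that the remaining components are ``of lower order or of controllable sign,'' without any argument. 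The paper only establishes the positivity of the base case in isolation (Proposition~\ref{prp_dominant_component}, $\rsum_N(0)>1/2$ for the all-infinite-block tuple), carries out the comparison with the non-base components explicitly for $N=1$ and $N=2$, and for general $N$ retreats to Conjecture~\ref{con:c_ineq}. The subdominant components $\compPrefactor_{N,n}\comp_{N,n}$ for $n\ge1$ carry a factor as small as $2^{-\lvert\bftau\rvert_{\tL}}$ but their number grows like $4^N$ and their signs oscillate, so no term-by-term majorization closes the argument; controlling their sum uniformly in $\bft$ is the entire content of the open problem, not a technical afterthought.

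To your credit, your final paragraph concedes exactly this, so you are not claiming more than you deliver; but as a proof of the stated conjecture the proposal fails at its central step. Two smaller points: your claim $\imagpart\gamma_t(\vartheta)=\LandauO(\vartheta^{3})$ is fine (it uses mean zero, oddness, and the existence of the third moment, which the paper establishes), though the paper only needs $\LandauO(\vartheta)$ to remove the singularity of the kernel; and the heuristic framing via ``right-skewness of the carry distribution'' is suggestive but is never converted into an inequality --- the paper itself remarks that it found no convincing heuristic for why the median should sit at the mean.
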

We note that the expected value of $\mu_t$ equals $0$ (related to the fact that $\digitsum(n)\ll \log n$).
Therefore we feel that an interpretation of the kind ``this conjecture is plausible since larger numbers typically have a larger sum of digits'' is improper.
To us, the apparent truth of Conjecture~\ref{conj_cusick} (resp. the analogous statement $\sum_{j\le0}\mu_t(j)\ge1/2$~\cite{DrmotaKauersSpiegelhofer2016}) is still a mystery.
We did not find a convincing heuristics as to why the median should invariably be close to the expected value $0$.
This conjecture is in fact not about limits.
More precisely, for each $j\in\mathbb Z$ and $t\ge0$, the values
\begin{equation*}
v_{t,\lambda,j}\eqdef\frac1{2^\lambda}\bigl \lvert\{0\leq n<2^\lambda:\digitsum(n+t)-\digitsum(n)\geq j\}\bigr \rvert
\end{equation*}
are nonincreasing in $\lambda$,
and we have
\begin{equation*}
c_t=v_{t,\lambda,0}
\end{equation*}
for $\lambda\geq \alpha+\mu$, where $\alpha=\digitsum(t)+1$ and $2^\mu\leq t<2^{\mu+1}$~\cite[Equation~(10) and Section~3.3]{DrmotaKauersSpiegelhofer2016},~\cite{SpiegelhoferWallner2019}.

The case that the binary expansion of $t$ contains sufficiently many blocks of $\tL$s was settled by Wallner and the second author~\cite{SpiegelhoferWallner2023}.
\begin{introtheorem}{A} \label{thm_SW23}
    There exists an effective constant $N_0$ with the following property.
    If the natural number $t$ has at least $N_0$ maximal blocks of $\tL$s in its binary expansion, then $c_t > 1/2$.
\end{introtheorem}
In other words, for $t\in\mathbb N$ having the binary expansion
\begin{equation}\label{eqn_t_binary}
(t)_2=\tL^{k_{N-1}}\tO^{\ell_{N-2}}\tL^{k_{N-2}}\cdots\tO^{\ell_0}\tL^{k_0},\quad\mbox{where}\quad\begin{cases} k_1,\ldots,k_{N-1}\ge1,\\
\ell_0,\ldots,\ell_{N-2}\ge1,
\end{cases}
\end{equation}
and $N\ge N_0$,
we have $c_t>1/2$.
The proof uses a representation by means of \emph{cumulants} $\kappa_j(t)$,
\begin{equation}\label{eqn_cumulant_representation}
\gamma_t(\vartheta)=\exp\Biggl(\sum_{2\leq j\leq J}\frac{\kappa_j(t)}{j!}(i\vartheta)^j\Biggr)+\mbox{Error},
\end{equation}
near $\vartheta=0$.
An admissible value for $N_0$ could easily be found by following the proof in~\cite{SpiegelhoferWallner2023}, as our arguments do not involve ``non-constructive elements''.
The resulting figure (certainly well above $10^3$) can certainly be somewhat lowered by closer investigation, but still a \emph{brute force attack}
on the remaining cases --- few blocks of $\tL$s in binary ---
appears infeasible.
These were called the \emph{hard cases} by Cusick (2020); here the cumulant approximation~\eqref{eqn_cumulant_representation} is not good enough.

We are certain that a satisfactory proof of Cusick's conjecture necessitates new structural results.
In the present paper, such a theorem is proved, which may lead to future progress on the full solution of the conjecture.

\section{Preliminaries and notation} \label{sec_preliminaries}
We will use the notation 
$\e(x)=\exp(ix)$ for real $x$ throughout.
In this paper, $0\in\mathbb N$.
We will use Landau notation, employing the symbols $\LandauO$ and $o$.

It can be shown \cite{DrmotaKauersSpiegelhofer2016} that
\begin{equation*}
\begin{aligned}
\mu_{2t}(j)&=\mu_t(j),\\
\mu_{2t+1}(j)&=\frac12\mu_t(j-1)+\frac12\mu_{t+1}(j+1),
\end{aligned}
\end{equation*}
for $t\ge0$ and $j\in\mathbb Z$.
Defining for brevity
\begin{equation*}
\alpha\eqdef\frac{\e(\vartheta)}2,\quad\beta\eqdef\frac{\e(-\vartheta)}2,
\end{equation*}
we thus get 
\begin{equation}\label{eqn_gamma_recurrence}
\begin{aligned}
\gamma_{2t}(\vartheta)&=\gamma_{t}(\vartheta),\\
\gamma_{2t+1}(\vartheta)&=\alpha\gamma_{t}(\vartheta)+\beta\gamma_{t+1}(\vartheta),
\end{aligned}
\end{equation}
with initial condition $\gamma_0(\vartheta)=1$. In particular, plugging $t=0$ into the second equality leads to
\begin{equation*}
\gamma_1(\vartheta)=\frac{\alpha}{1-\beta}.
\end{equation*}
Due to the first equality, we may assume without loss of generality that $t$ is odd, and will usually do so in the remainder of the paper.

Using the transition matrices
\begin{equation*}
A_0=\left(\begin{matrix}1&0\\\alpha&\beta\end{matrix}\right),\quad
A_1=\left(\begin{matrix}\alpha&\beta\\0&1\end{matrix}\right),
\end{equation*}
we we may rewrite the relations \eqref{eqn_gamma_recurrence} in matrix form:
\begin{equation} \label{eqn_matrix_recurrence}
    \begin{pmatrix} \gamma_{2t} \\ \gamma_{2t+1} \end{pmatrix} = A_0
   \begin{pmatrix} \gamma_{t} \\ \gamma_{t+1} \end{pmatrix}, \qquad \begin{pmatrix} \gamma_{2t+1} \\ \gamma_{2t+2} \end{pmatrix} = A_1
   \begin{pmatrix} \gamma_{t} \\ \gamma_{t+1} \end{pmatrix}.
\end{equation} 
Let the binary expansion of $t\ge0$ be given by~\eqref{eqn_t_binary}, where $N\ge1$.
The values $k_i$ and $\ell_j$ are uniquely determined in this case
%(we will later drop the assumption that all block lengths are positive), 
and by repeated application of \eqref{eqn_matrix_recurrence} we get

\begin{equation}\label{eqn_gamma_blocks}
\gamma_t=\left(\begin{matrix}1&0\end{matrix}\right)
A_1^{k_0} A_0^{\ell_0} \cdots A_0^{\ell_{N-2}} A_1^{k_{N-1}}
\left(\begin{matrix}1\\\gamma_1\end{matrix}\right)
\end{equation}
(compare~\cite[(1)]{MorgenbesserSpiegelhofer2012}).
This is also valid for $N=0$, in which case only the product of the outermost matrices remains, yielding $\gamma_0=1$.

Products of the matrices $A(\varepsilon)$ therefore encode all the information about the characteristic function $\gamma_t$.
Their powers are readily computed:
\begin{equation*}
A_0^\ell=\left(\begin{matrix}
1&0\\\alpha\frac{1-\beta^\ell}{1-\beta}&\beta^\ell
\end{matrix}\right), \qquad  A_1^k=\left(\begin{matrix}
\alpha^k&\beta\frac{1-\alpha^k}{1-\alpha}\\0&1
\end{matrix}\right).
\end{equation*}

Note that the factors in \eqref{eqn_gamma_blocks} appear in reverse order compared to the corresponding blocks of digits in the binary expansion of $t$.
% Moreover, $\left(\begin{smallmatrix}1\\\gamma_1\end{smallmatrix}\right)$ is a right eigenvector of $A_0$ to the eigenvalue $1$.
% Similarly, $\left(\begin{smallmatrix}1&0\end{smallmatrix}\right)$ is a left eigenvector of $A_0$ to the eigenvalue $1$. 

\subsection{An extension to infinite blocks}
As we will see later, certain interesting properties of the distributions $\gamma_t$ can be discovered when prolonging a given block of $\tL$s or $\tO$s in the binary expansion of $t$ indefinitely. In order to treat these limiting cases and the functions $\gamma_t$ themselves in a unified manner, we extend the definition \eqref{eqn_gamma_blocks} by allowing ``infinite blocks'' of zeros and ones in the binary expansion of $t$. Furthermore, we may also consider $\alpha, \beta$ as independent variables (with ``default'' values $\alpha=\e(\vartheta)/2, \beta=\e(-\vartheta)/2$).
In this extended setting, more general and simpler statements can be given, while their restrictions to $\gamma_t$ for $t \in \N$ are easily recovered.

 % We are now going to extend the definition \eqref{eqn_gamma_blocks} of $\gamma_t$ by allowing ``infinite blocks'' of zeros and ones in the binary expansion of $t$, and also considering $\alpha, \beta$ as independent variables.
%This will yield some more general and simpler statements.

Let us make this more precise. Consider the set of extended positive integers
$\NI \eqdef \N_+ \cup \{+\infty\}$,
and their $2N$-tuples 
\[\bft = (k_0, \ell_0, \dots, k_{N-1}, \ell_{N-1}) \in  \NI^{2N}.\]
We call each such $\bft$ an \emph{extended binary expansion}. For any odd positive integer $t$ having binary expansion $\tL^{k_{N-1}} \tO^{\ell_{N-2}} \tL^{k_{N-2}} \cdots \tO^{\ell_{0}} \tL^{k_{0}}$ we can (uniquely) identify it with an extended binary expansion via the mapping
$$t  \mapsto \bft = (k_0, \ell_0, \dots, k_{N-2}, \ell_{N-2},k_{N-1}, \infty).  $$
Here, the last component being $\infty$ effectively corresponds to prepending the binary expansion of $t$ with infinitely many leading zeros. Although this may seem artificial, for the purpose of our results it will be convenient to have the same amount of blocks of $\tO$s and $\tL$s. Observe that the order of block lengths $k_i,\ell_j$ is reversed, which aligns with the matrix products in \eqref{eqn_gamma_blocks}.

\begin{example} \label{ex:extended_binary_expansion}
   %\mathbf{t} = (5,3,2,\infty)$
   $\mathbf{t} = (2,3,5,\infty)$
   is an extended binary expansion for $t = [\tL^5 \tO^3 \tL^2]_2 = 995$. 
\end{example}

It is important to note that if the length $\ell_{N-1}$ of the block of leading $\tO$s varies, then the obtained extended binary expansions are treated as different. In particular, we cannot identify $\bft$ such that  $\ell_{N-1} < \infty$ with integers. Hence, $\mathbf{t}$ from example \ref{ex:extended_binary_expansion} modified to
%$ (5,3,2,2)$
$ (2,3,5,2)$
does \emph{not} represent $995$.

\begin{remark}\label{rem_zero}
Throughout the remainder of the paper we use the notations $t$ as well as $\bft$.
The notation $t$ always signifies a nonnegative integer, while $\bft$ is an extended binary expansion which may (or may not) represent an integer.
\end{remark}

We move on to extend the definition of $\gamma_t$ to extended binary expansions. In a later part of this section we verify that this procedure is indeed correct. 
We first define the ``matrix powers'' $A_0^\infty$ and $A_1^\infty$ by
\begin{equation}\label{eqn_powers_limit}
A_0^\infty
\eqdef\left(\begin{matrix}
1&0\\ \frac{\alpha}{1-\beta}&0
\end{matrix}\right), \qquad  A_1^\infty\eqdef
\left(\begin{matrix}
0&\frac{\beta}{1-\alpha}\\0&1
\end{matrix}\right),
\end{equation}
where $\alpha, \beta$ are treated as variables.

Taking a hint from~\eqref{eqn_gamma_blocks},
we define
\begin{equation}\label{eqn_Mt_def}
M(\bft)\eqdef
A_1^{k_0} A_0^{\ell_0} \cdots A_1^{k_{N-1}} A_0^{\ell_{N-1}},
\end{equation}
and, reusing notation, $\gamma_\bft(\alpha,\beta)$ as the top left entry of $M(\bft)$:
\begin{equation} \label{eqn_gamma_blocks_infinite}
\gamma_{\bft}(\alpha,\beta) \eqdef \begin{pmatrix} 1 & 0 \end{pmatrix}
M(\bft)
\begin{pmatrix} 1 \\ 0 \end{pmatrix}.
\end{equation}

Importantly, for $\alpha=\e(\vartheta)/2, \beta=\e(-\vartheta)/2$ we have 
\begin{equation}\label{eqn_infinite_final_block}
A_0^{\infty} \begin{pmatrix} 1 \\ 0 \end{pmatrix} = \begin{pmatrix}
    1\\ \frac{\alpha}{1-\beta}
\end{pmatrix} =
\begin{pmatrix} 1 \\ \gamma_1 \end{pmatrix}.
\end{equation}
Hence, when $\ell_{N-1}=\infty$ and all the other components of $\bft$ are finite, we can recover  the old definition \eqref{eqn_gamma_blocks} for an integer $t$ with binary expansion $(t)_2 = \tL^{k_{N-1}} \tO^{\ell_{N-2}} \cdots \tO^{\ell_0} \tL^{k_0}$. More precisely, we then get
$$ \gamma_t(\vartheta) = \gamma_\bft\left(\frac{\e(\vartheta)}{2}, \frac{\e(-\vartheta)}{2} \right), \quad 
 \mathbf t=(k_0,\ell_0,\ldots,k_{N-2},\ell_{N-2},k_{N-1},\infty)..$$

In a similar fashion, we may define Cusick densities $c_\bft$, by considering the expansion of $\gamma_\bft$ into a bivariate power series in $\alpha, \beta$, and letting
$$  c_{\bft} =  \sum_{i \ge j }\frac1{2^{i+j}}
\bigl[\alpha^i\beta^j\bigr]
\gamma_{\mathbf t}\bigl(\alpha,\beta\bigr).$$
Again, if we let $\bft$ represent an odd integer $t$ and put $\alpha=\e(\vartheta)/2, \beta=\e(-\vartheta)/2$, we obtain the initial definition of $c_t$.

The goal of the remainder of this section is to verify that $\gamma_\bft, c_\bft$ are indeed well-defined and have desired formal and analytical properties. In particular, we would like to prove that passing to the limit $\bft^{(n)} \to \bft \in \NI^{2N}$ yields $\gamma_{\bft^{(n)}} \to \gamma_\bft$ uniformly, as well as $c_{\bft^{(n)}} \to c_\bft$. This in turn would mean that for positive integers $t$ with long blocks of $\tO$s and $\tL$s in their binary expansions, $\gamma_t, c_t$ can be approximated by setting those block lengths to $\infty$ (effectively reducing the number of parameters $k_j, \ell_j$).

\subsection{Properties of the extension} 
In the following, we consider all expressions as formal power series in the variables $\alpha$ and $\beta$.
Thus, the matrices $A_0$ and $A_1$ are $2\times 2$ matrices over the ring $\mathbb C[[\alpha,\beta]]$ of formal power series in two variables $\alpha$, $\beta$.

We now discuss formal and analytical correctness of the more general definition of $\gamma_{\bft}$. First of all, note that the sequences of formal power series given by
\[
p_m=
\alpha\frac{1-\beta^{m+1}}{1-\beta}
=\sum_{0\leq j\leq m}\alpha\beta^j\quad\mbox{and}\quad
q_m=\beta^m
\]
converge, as formal power series, to the power series $\alpha(1-\beta)^{-1}$ and $0$, respectively.
(Each coefficient is ultimately equal to the corresponding coefficient of the prospective limit.)
Of course, the same is true for $\alpha$ and $\beta$ exchanged.
By Cauchy multiplication, this immediately implies that an analogous statement is true for products of the matrices $A_1^k$ and $A_0^\ell$.
More precisely, we introduce a metric $d$ on the set $\NI^M$ of $M$-tuples of extended natural numbers, defined for $\bft=(t_0,\ldots,t_{M-1}), \bft'=(t'_0,\ldots,t'_{M-1})$ by
\begin{equation*}
\begin{aligned}
d(\bft,\bft')&\eqdef \max_{0\leq j<M}
\bigl\lvert f(t_j)-f(t'_j)\bigr\rvert,
\quad\mbox{where}\quad
f(n)\eqdef \frac1{n+1}
\mbox{ and }
f(\infty)\eqdef 0.
\end{aligned}
\end{equation*}
If $\bft^{(n)}\to \bft$ with respect to this metric (i.e. component-wise),
the sequence of matrices
$M\bigl(\bft^{(n)}\bigr)$
of power series
converges entry-wise to a limit independent of the chosen sequence $(\bft^{(n)})_n$.

In order to translate this continuity statement to an ``analytical'' one, we state the following lemma.
\begin{lemma}\label{lem_LR_splitting}
Assume that $\bft=(k_0,\ell_0,\ldots,k_{N-1},\ell_{N-1})\in\NI^{2N}$.
Let the formal power series $\mathfrak a$, $\mathfrak b$, $\mathfrak c$, and $\mathfrak d$ in $(\alpha,\beta)$ be chosen such that
\[
M(\bft)
=
\left(\begin{matrix}\mathfrak a(\alpha,\beta)&\mathfrak b(\alpha,\beta)\\\mathfrak c(\alpha,\beta)&\mathfrak d(\alpha,\beta)\end{matrix}\right).\]
Then
\begin{equation}\label{eqn_sum_of_coefficients}
\begin{aligned}
\sum_{i,j\ge0}2^{-(i+j)}\bigl[\alpha^i\beta^j\bigr]\bigl(\mathfrak a(\alpha,\beta)+\mathfrak b(\alpha,\beta)\bigr)=1,\\
\sum_{i,j\ge0}2^{-(i+j)}\bigl[\alpha^i\beta^j\bigr]\bigl(\mathfrak c(\alpha,\beta)+\mathfrak d(\alpha,\beta)\bigr)=1.
\end{aligned}
\end{equation}
Also, the coefficients of the formal power series $\mathfrak a,\mathfrak b,\mathfrak c,\mathfrak d$ are nonnegative.
\end{lemma}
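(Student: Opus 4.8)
The plan is to read the two displayed identities as the single statement that the matrix obtained from $M(\bft)$ by the substitution $\alpha=\beta=\tfrac12$ is row-stochastic, and to prove stochasticity by exploiting that it is preserved under matrix multiplication. First I would dispose of the nonnegativity claim, since it underpins everything else. From the explicit expansions the entries of the finite building blocks, namely $\alpha\frac{1-\beta^\ell}{1-\beta}=\sum_{0\le j<\ell}\alpha\beta^j$ and $\beta^\ell$ in $A_0^\ell$ (and symmetrically in $A_1^k$), as well as those of the infinite blocks, where $\frac{\alpha}{1-\beta}=\sum_{j\ge0}\alpha\beta^j$ and $\frac{\beta}{1-\alpha}=\sum_{j\ge0}\beta\alpha^j$, all have coefficients in $\{0,1\}$, hence nonnegative. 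Because a Cauchy product of power series with nonnegative coefficients again has nonnegative coefficients, and $M(\bft)$ is a finite product of such matrices, the entries $\mathfrak a,\mathfrak b,\mathfrak c,\mathfrak d$ have nonnegative coefficients.

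Next I would introduce the functional $\Phi(F)\eqdef\sum_{i,j\ge0}2^{-(i+j)}[\alpha^i\beta^j]F\in[0,\infty]$ on power series with nonnegative coefficients (morally, evaluation at $\alpha=\beta=\tfrac12$), extended entrywise to matrices, and record its two key properties. It is additive; and, more importantly, it is multiplicative on products with finite values: if $G,H$ have nonnegative coefficients and $\Phi(G),\Phi(H)<\infty$, then $\Phi(GH)=\Phi(G)\Phi(H)$. This is exactly the statement that the Cauchy product of two convergent series of nonnegative terms sums to the product of the sums, justified by Tonelli's theorem applied to the nonnegative family $2^{-(i'+i''+j'+j'')}g_{i'j'}h_{i''j''}$. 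Consequently $\Phi(BC)=\Phi(B)\Phi(C)$ for matrices $B,C$ of nonnegative-coefficient series all of whose $\Phi$-entries are finite.

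The third step is a direct computation showing that each building block becomes stochastic under $\Phi$. Writing $\mathbf 1=(1,1)^{\top}$, a nonnegative matrix with finite entries is stochastic exactly when $\Phi(B)\mathbf 1=\mathbf 1$. For $A_0^\ell$ the $\Phi$-image of the bottom row is $(1-2^{-\ell})+2^{-\ell}=1$, and similarly for $A_1^k$; for the infinite blocks one gets $\Phi(A_0^\infty)=\left(\begin{smallmatrix}1&0\\1&0\end{smallmatrix}\right)$ and $\Phi(A_1^\infty)=\left(\begin{smallmatrix}0&1\\0&1\end{smallmatrix}\right)$, again stochastic and with all entries finite. Since stochasticity is preserved under multiplication, via $(BC)\mathbf 1=B(C\mathbf 1)=B\mathbf 1=\mathbf 1$, an induction on the number of factors — using the multiplicativity of the previous step at each stage to keep every partial product finite (indeed with entries in $[0,1]$) — shows that $\Phi(M(\bft))$ is stochastic. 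Its two row-sum equations $\Phi(\mathfrak a)+\Phi(\mathfrak b)=1$ and $\Phi(\mathfrak c)+\Phi(\mathfrak d)=1$ are, by additivity of $\Phi$, precisely the identities~\eqref{eqn_sum_of_coefficients}.

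I expect the hard part to be the rigorous justification that $\Phi$ commutes with matrix multiplication, i.e. the interchange of the infinite coefficient summation with the Cauchy product. This is why establishing nonnegativity first is not an afterthought but a prerequisite: it is what makes Tonelli's theorem applicable and what guarantees that finiteness is maintained along the inductive product, so that no conditionally convergent rearrangement ever occurs.
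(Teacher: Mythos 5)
Your proof is correct and follows essentially the same route as the paper, which establishes nonnegativity by inspection of the building blocks and proves \eqref{eqn_sum_of_coefficients} by an induction whose core is that the property is preserved under right multiplication by $A_0$, $A_1$, $A_0^\infty$, $A_1^\infty$ --- exactly your observation that evaluation at $\alpha=\beta=\tfrac12$ yields row-stochastic matrices and that stochasticity is preserved under products. The paper explicitly skips the details; your Tonelli justification of the multiplicativity of $\Phi$ supplies precisely the omitted verification.
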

\begin{proof}
Nonnegativity immediately follows by inspecting the matrices $A_0$ and $A_1$, as the power series $(1-\alpha)^{-1}$ and $(1-\beta)^{-1}$ satisfy this property.
The first statement is proved by an easy induction.
Its core is the fact that the properties~\eqref{eqn_sum_of_coefficients} are preserved under multiplication by $A_0$, $A_1$, $A_0^\infty$, or $A_1^\infty$ from the right. 
We skip the details.
\end{proof}

\subsubsection{Evaluating the formal power series $\gamma$} \label{sec_evaluating}

Assume that $\alpha$ and $\beta$ are complex numbers of absolute values $\lvert\alpha\rvert, \lvert\beta\rvert\leq1/2$.
We may evaluate $(1-\alpha)^{-1}$ and $(1-\beta)^{-1}$, and thus assign a complex value to $\gamma_{\bft}(\alpha,\beta)$.
Now assume that $\bft^{(n)}\to \bft$, which gives rise to a convergent sequence of formal power series in $(\alpha,\beta)$:
\begin{equation}\label{eqn_formal_convergence}
\gamma_{\bft^{(n)}}(\alpha,\beta)
\to\gamma_{\bft}(\alpha,\beta)\quad\mbox{as}\quad n\to\infty.
\end{equation}
Hence, for each $M\ge0$ and $n\ge n_0(M)$, we have
\begin{equation}\label{eqn_limit_swap_0}
2^{-(i+j)}\bigl[\alpha^i\beta^j\bigr]\gamma_{\mathbf t^{(n)}}(\alpha,\beta)=
2^{-(i+j)}\bigl[\alpha^i\beta^j\bigr]\gamma_{\bft}(\alpha,\beta)
\end{equation}
for all $i,j\leq M$.
By Lemma~\ref{lem_LR_splitting},
\[
\sum_{\max(i,j)>M}\textrm{LHS}=o(1)\quad\mbox{and}\quad
\sum_{\max(i,j)>M}\textrm{RHS}=o(1),\]
as $M\to\infty$.
Consequently, we have \emph{uniform convergence}
\begin{equation}\label{eqn_uniform_convergence}
\gamma_{\mathbf t^{(n)}}\to
\gamma_{\mathbf t}
\end{equation}
of the corresponding sequence of functions $B^2\to\mathbb C$, where $B=\{z\in\mathbb C:\lvert z\rvert\leq 1/2\}$.
Also, we have the uniform bound
\begin{equation*}
\bigl\lvert\gamma_{\mathbf t}(\alpha,\beta)\bigr\rvert\leq1
\quad\mbox{for all}\quad
\alpha,\beta\in B.
\end{equation*}

\noindent\textbf{Recovering the characteristic function.}
We already noted before that the values $\gamma_t(\vartheta)$ are contained as a special case.
To see this, assume that $t$ has a binary expansion as in~\eqref{eqn_t_binary}.
By~\eqref{eqn_infinite_final_block},
the substitution $\alpha=\e(\vartheta)/2$, $\beta=\e(-\vartheta)/2$
leads to
\begin{equation}\label{eqn_gamma_generalization}
\gamma_t(\vartheta) = \gamma_{\mathbf t}\biggl(\frac{\e(\vartheta)}2,\frac{\e(-\vartheta)}2\biggr),
\quad\mbox{where}\quad
\mathbf t=(k_0,\ell_0,\ldots,k_{N-2},\ell_{N-2},k_{N-1},\infty).
\end{equation}
 Lemma~\eqref{lem_LR_splitting} nicely illustrates the fact that the right hand side does in general does not define a characteristic function, as $\mathfrak b$ may be nonzero.
However, in the case $\ell_{N-1}=\infty$, the right column of the product of matrices is in fact identically $0$, which aligns with the fact that $\gamma_t$ is indeed a characteristic function.

\subsubsection{Generalized densities for Cusick's conjecture}
In this vein, the bivariate power series $\gamma_{\mathbf t}(\bl,\bl)$, defined for any $\bft\in\NI^{2N}$, gives rise to a Fourier series with coefficients
\begin{equation}\label{eqn_deltac_def}
\mu_{\mathbf t}(m)
\eqdef
\sum_{i-j=m}
\frac1{2^{i+j}}
\bigl[\alpha^i\beta^j\bigr]
\gamma_{\mathbf t}\bigl(\alpha,\beta\bigr)
,
\end{equation}
viz.
\begin{equation*}
\gamma_{\mathbf t}\left(\frac{\e(\vartheta)}{2}, \frac{\e(-\vartheta)}{2}  \right) = \sum_{m \in \Z} \mu_{\mathbf t}(m)\e(m \vartheta).
\end{equation*}

Taking~\eqref{eqn_gamma_generalization},~\eqref{eqn_deltac_def} into account, we see that
\begin{equation*}
\mu_t(m)=\mu_{\bft}(m)\quad\mbox{for}\quad
\mathbf t=(k_0,\ell_0,\ldots,k_{N-2},\ell_{N-2},k_{N-1},\infty),
\end{equation*}
where the binary expansion of $t$ is given by $k_j$ and $\ell_j$ as before.
Note that $\mu_{\bft}$ is defined for general $2N$-tuples $\bft$.
In particular, tuples $\bft\in\N^{2N}$ are admissible, but leading zeros are important in our setting:
the families $\mu_{\bft}$ and $\mu_t$ may be different if the leading block of zeros is not infinite (the left hand side always sums to $1$, while this is not true in general for the right hand side, see Lemma~\ref{lem_LR_splitting}).

We have $\mu_{\mathbf t}(m)\ge0$, and by Lemma~\ref{lem_LR_splitting}, $\sum_{m\in\mathbb Z}\mu_{\mathbf t}(m)\le1$.
We may therefore define values $\Delta_{\mathbf t}(a)$ by means of the absolutely convergent series
\begin{equation}\label{eqn_Delta_def}
\begin{aligned}
\Delta_{\mathbf t}(a)&\eqdef \sum_{m\geq a} \mu_{\mathbf t}(m)
=\sum_{i-j\ge a}\frac1{2^{i+j}}
\bigl[\alpha^i\beta^j\bigr]
\gamma_{\mathbf t}\bigl(\alpha,\beta\bigr),
\quad\mbox{in particular,}\\
\quad
c_{\mathbf t}&\eqdef 
\Delta_{\mathbf t}(a)
=\sum_{m \geq 0} \mu_{\mathbf t}(m).
\end{aligned}
\end{equation}
In particular, if $\bft$ is the extended binary expansion of $t$, we have
\begin{equation*}
c_t = c_{\bft}.
\end{equation*}

The following proposition shows that computing $c_{\mathbf t}$ commutes with limits.
\begin{proposition} \label{prop_limit_commute}
Let $\mathbf t\in\NI^{2N}$, and $a\in\mathbb Z$.
Assume that $\mathbf t^{(n)}\in\NI^{2N}$, for $n\ge0$, are such that
$\mathbf t^{(n)}\to\mathbf t$.
Then 
\begin{equation*}
\lim_{n\to\infty}  \Delta_{\mathbf t^{(n)}}(a)= \Delta_{\mathbf t}(a).
\end{equation*}
\end{proposition}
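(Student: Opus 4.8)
The plan is to reduce the statement to an $\ell^1$-convergence result for the two-dimensional array of weights underlying $\Delta$, and then close it with a Scheffé-type argument. Write
\[
w_{\mathbf t}(i,j)\eqdef 2^{-(i+j)}\bigl[\alpha^i\beta^j\bigr]\gamma_{\mathbf t}(\alpha,\beta),
\]
so that by~\eqref{eqn_Delta_def} we have $\Delta_{\mathbf t}(a)=\sum_{i-j\ge a}w_{\mathbf t}(i,j)$, and likewise for each $\mathbf t^{(n)}$. By Lemma~\ref{lem_LR_splitting} all the weights $w_{\mathbf t}(i,j)$ and $w_{\mathbf t^{(n)}}(i,j)$ are nonnegative, and the total sums satisfy $\sum_{i,j}w_{\mathbf t}(i,j)\le 1$. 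Since
\[
\bigl\lvert\Delta_{\mathbf t^{(n)}}(a)-\Delta_{\mathbf t}(a)\bigr\rvert
\le\sum_{i,j}\bigl\lvert w_{\mathbf t^{(n)}}(i,j)-w_{\mathbf t}(i,j)\bigr\rvert
\]
\emph{uniformly in $a$}, it suffices to show that this $\ell^1$ distance tends to $0$.

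Two ingredients feed the Scheffé argument. First, \emph{coefficientwise convergence}: by the formal convergence~\eqref{eqn_formal_convergence}, made quantitative in~\eqref{eqn_limit_swap_0}, for every $M$ and every $n\ge n_0(M)$ we have $w_{\mathbf t^{(n)}}(i,j)=w_{\mathbf t}(i,j)$ for all $i,j\le M$; in particular $w_{\mathbf t^{(n)}}(i,j)\to w_{\mathbf t}(i,j)$ for each fixed pair $(i,j)$. Second, \emph{convergence of total mass}: since all coefficients are nonnegative and the series converges on $B=\{z\in\mathbb C:\lvert z\rvert\le 1/2\}$, the total mass is the honest evaluation
\[
S_{\mathbf t}\eqdef\sum_{i,j\ge 0}w_{\mathbf t}(i,j)=\gamma_{\mathbf t}\Bigl(\tfrac12,\tfrac12\Bigr),
\]
that is, the value of $\gamma_{\mathbf t}$ at $\vartheta=0$. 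The uniform convergence~\eqref{eqn_uniform_convergence} of $\gamma_{\mathbf t^{(n)}}$ to $\gamma_{\mathbf t}$ on $B^2$ then yields $S_{\mathbf t^{(n)}}\to S_{\mathbf t}$.

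With these two facts the standard Scheffé computation finishes the proof. Writing $g_n=w_{\mathbf t^{(n)}}$ and $g=w_{\mathbf t}$ and using the pointwise identity $\lvert g_n-g\rvert=(g_n-g)+2(g-g_n)^+$, we obtain
\[
\sum_{i,j}\bigl\lvert g_n(i,j)-g(i,j)\bigr\rvert
=\bigl(S_{\mathbf t^{(n)}}-S_{\mathbf t}\bigr)
+2\sum_{i,j}\bigl(g(i,j)-g_n(i,j)\bigr)^+.
\]
The first term tends to $0$ by total-mass convergence. In the second term each summand is dominated by $g(i,j)$, which is summable, and tends to $0$ pointwise by coefficientwise convergence; dominated convergence (on $\N^2$ with counting measure) forces $\sum_{i,j}(g-g_n)^+\to 0$. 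Hence the $\ell^1$ distance tends to $0$, and with it $\Delta_{\mathbf t^{(n)}}(a)\to\Delta_{\mathbf t}(a)$, uniformly in $a$.

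I expect the main obstacle to be precisely the control of the tail of the array, i.e.\ ruling out an \emph{escape of mass to infinity}: coefficientwise convergence alone is too weak, since it is compatible with mass leaking off to large $(i,j)$. The decisive observation that removes this obstacle is that the total mass equals the evaluation $\gamma_{\mathbf t}(1/2,1/2)$, which we already control through the uniform convergence established earlier in the excerpt; this is exactly what upgrades mere coefficientwise convergence to $\ell^1$-convergence via Scheffé's lemma.
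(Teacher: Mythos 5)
Your proof is correct. It rests on the same two pillars as the paper's argument --- coefficientwise stabilization \eqref{eqn_limit_swap_0} and a mechanism ruling out escape of mass to infinity --- but implements the second pillar differently. The paper truncates at $\max(i,j)\le M$, interchanges the limit with the finite sum, and bounds the tails uniformly in $n$ via Lemma~\ref{lem_LR_splitting} (the weighted coefficients of $\mathfrak a+\mathfrak b$ sum to exactly $1$, and the coefficients up to $M$ stabilize, so the tail of the $n$-th term is eventually controlled by that of the limit); this stays entirely at the level of formal power series. You instead identify the total mass with the point evaluation $\gamma_{\bft}(1/2,1/2)$, invoke the uniform convergence \eqref{eqn_uniform_convergence} to obtain convergence of total masses, and close with a Scheff\'e argument upgrading coefficientwise convergence to $\ell^1$ convergence of the weight arrays. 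Both routes are sound; yours yields a slightly stronger conclusion (convergence uniform in $a$, indeed $\ell^1$ convergence of the measures $\mu_{\bft^{(n)}}\to\mu_{\bft}$), at the cost of being mildly indirect --- the uniform convergence \eqref{eqn_uniform_convergence} you cite was itself derived in Section~\ref{sec_preliminaries} from the very tail estimate of Lemma~\ref{lem_LR_splitting} that the paper reuses here, so the two proofs ultimately draw on the same source.
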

\begin{proof}
By~\eqref{eqn_formal_convergence} and~\eqref{eqn_Delta_def}, we need to show that
\[
\sum_{i-j\ge a}\frac1{2^{i+j}}
\bigl[\alpha^i\beta^j\bigr]
\gamma_{\mathbf t}\bigl(\alpha,\beta\bigr)
=
\lim_{n\to\infty}
\sum_{i-j\ge a}\frac1{2^{i+j}}
\bigl[\alpha^i\beta^j\bigr]
\gamma_{\mathbf t^{(n)}}\bigl(\alpha,\beta\bigr).
\]
Note first that, for all $i,j\ge0$,
\[
\bigl[\alpha^i\beta^j\bigr]
\gamma_{\mathbf t}\bigl(\alpha,\beta\bigr)
=
\bigl[\alpha^i\beta^j\bigr]
\lim_{n\to\infty}
\gamma_{\mathbf t^{(n)}}\bigl(\alpha,\beta\bigr)
=\lim_{n\to\infty}
\bigl[\alpha^i\beta^j\bigr]
\gamma_{\mathbf t^{(n)}}\bigl(\alpha,\beta\bigr),
\]
as the sequence $n\mapsto\gamma_{\bft^{(n)}}$ stabilizes component-wise. 
For any $M>0$, we may interchange summation and transition to a limit for the restricted sums over
\[\bigl\{(i,j)\in\mathbb N^2:i-j\ge a, i\le M,j\le M\bigr\},\]
while the remaining sums are $o(1)$ as $M\to\infty$, by Lemma~\ref{lem_LR_splitting}.
\end{proof}

\section{Main theorems} \label{sec_main} 
The content of our main theorem is a decomposition of $\gamma_{\bft}$ into a sum of \emph{components} (defined in~\eqref{eqn_component_decomp},~\eqref{eqn_comp_def} below).
Let us consider the sum-of-digits case, where $\alpha=\e(\vartheta)/2$, $\beta=\e(-\vartheta)/2$.
The presence of the term $(1-\alpha-\beta)^n$ in~\eqref{eqn_component_decomp} below
causes the Taylor expansion of component $n$ to start with $\vartheta^{2n}$ (at the earliest).
Thus the choice of the name ``component'' becomes apparent.

In order to state the theorem, we introduce some notation.
Assume that $N\ge1$ is an integer, and $\bft=(k_0,\ell_0,\ldots,k_{N-1},\ell_{N-1})\in\NI^{2N}$.
A crucial role in our theorem is played by the sequence of lengths of $\tL$- and $\tO$-blocks.
We thus write
{\everymath={\displaystyle}
\begin{equation*}
\begin{array}{l@{\hspace{3pt}}l@{}l@{\hspace{0.5em}}l@{\hspace{3pt}}c@{\hspace{3pt}}c@{\hspace{3pt}}c@{\hspace{3pt}}c@{\hspace{3pt}}c@{\hspace{3pt}}c@{\hspace{3pt}}c@{\hspace{3pt}}c@{\hspace{3pt}}c}
\teven&\eqdef&&\bigl(&k_0,&0,&k_1,&0,&\ldots,&k_{N-1},&0&\bigr),\\[3mm]
\todd&\eqdef&&\bigl(&0,&\ell_0,&0,&\ell_1,&\ldots,&0,&\ell_{N-1}&\bigr).
\end{array}
\end{equation*}
}
Our decomposition involves a sum over all subsets of blocks, indexed by 
$\bftau\in\{\tO,\tL\}^{2N}$. Define
{\everymath={\displaystyle}
\begin{equation}
\begin{array}{r@{\hspace{1mm}}l}
\lvert\bftau\rvert_\tL&\eqdef\#\{0\leq j<2N:\bftau_j=\tL\},\\[2mm]
\lvert\bftau\rvert_{\tL\tO}&\eqdef\#\{0\leq j<2N-1:(\bftau_{j},\bftau_{j+1})=(\tL,\tO)\}.
\end{array}
\end{equation}
}
The introduction of $\lvert\bftau\rvert_{\tL\tO}$ seems artificial at this point, but in fact this quantity is a key element of the problem.
 We will comment on this later on (see Section~\ref{sec_remarks}).

For convenience, we extend the inner product ``$\cdot$''  to $\NI^{2N}$ by the standard convention $0 \times\infty=\infty \times0=0$ and $a+\infty=\infty+a=\infty+\infty=\infty$,
and we set $\alpha^\infty=\beta^\infty=0$.

\bigskip
\begin{theorem}\label{thm_components}
Let $N\ge1$ be an integer.
For all $\mathbf t\in\NI^{2N}$ we have
\begin{equation}\label{eqn_gamma_components}
\gamma_{\mathbf t}(\alpha,\beta)=
\sum_{0\leq n\leq N}
(-1)^n
\compPrefactor_{N,n}(\alpha,\beta)
\comp_{N,n}(\alpha,\beta),
\end{equation}
where
\begin{equation}\label{eqn_component_decomp}
\compPrefactor_{N,n}(\alpha,\beta)
\eqdef
(\alpha\beta)^{N-n}
\frac{(1-\alpha-\beta)^n}{((1-\alpha)(1-\beta))^N},
\end{equation}
\begin{equation}\label{eqn_comp_def}
\comp_{N,n}(\alpha,\beta)\eqdef
\sum_{\substack{\bftau\in \{\tO,\tL\}^{2N}\\
\lvert\bftau\rvert_{\tL\tO}=n}}
(-1)^{\lvert\bftau\rvert_{\tL}}
\alpha^{\bftau\cdot\teven}\beta^{\bftau\cdot\todd}.
\end{equation}
\end{theorem}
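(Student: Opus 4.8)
The plan is to expand the matrix product $M(\bft)$ from~\eqref{eqn_Mt_def} one summand at a time and read off its top-left entry. The starting point is the elementary decomposition
\[
A_1^{k} = \alpha^{k} P_1 + Q_1, \qquad A_0^{\ell} = \beta^{\ell} P_0 + Q_0,
\]
where, writing $a\eqdef\alpha/(1-\beta)$ and $b\eqdef\beta/(1-\alpha)$,
\[
P_1 = \begin{pmatrix} 1 & -b \\ 0 & 0 \end{pmatrix}, \quad Q_1 = A_1^\infty, \quad P_0 = \begin{pmatrix} 0 & 0 \\ -a & 1 \end{pmatrix}, \quad Q_0 = A_0^\infty .
\]
This holds uniformly for $k,\ell\in\NI$ under the convention $\alpha^\infty=\beta^\infty=0$, so that $Q_1=A_1^\infty$ and $Q_0=A_0^\infty$ are exactly the $\infty$-cases in~\eqref{eqn_powers_limit}. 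Substituting into $M(\bft)=\prod_{i=0}^{N-1}(A_1^{k_i}A_0^{\ell_i})$ and multiplying out, I obtain a sum over all choice strings $\bftau\in\{\tO,\tL\}^{2N}$: at the factor in position $j$ the letter $\tL$ selects the $P$-matrix and $\tO$ the $Q$-matrix. Collecting the scalars $\alpha^{k_i},\beta^{\ell_i}$ attached to the $P$-choices shows that the coefficient of the resulting matrix product is precisely the monomial $\alpha^{\bftau\cdot\teven}\beta^{\bftau\cdot\todd}$ of~\eqref{eqn_comp_def}.

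The decisive observation is that each of $P_1,Q_1,P_0,Q_0$ has rank one. Writing each as a column times a row, a product of rank-one matrices telescopes: its top-left entry is the first entry of the leading column, times the first entry of the trailing row, times the product of the consecutive inner products $r^{(j)}\cdot c^{(j+1)}$ (row of factor $j$ dotted with column of factor $j+1$). These inner products, together with the two boundary entries, form a short finite table depending only on the pair $(\bftau_j,\bftau_{j+1})$ and on the parity of $j$; crucially, an inner product equals $1-ab=(1-\alpha-\beta)/((1-\alpha)(1-\beta))$ exactly at a $(\tL,\tO)$ descent, while all other values are $\pm1$, $\pm a$, or $\pm b$.

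It then remains to prove the key identity: for each $\bftau$ the top-left entry $W(\bftau)$ of its matrix product equals $(-1)^{n+\lvert\bftau\rvert_\tL}\,\compPrefactor_{N,n}(\alpha,\beta)$ with $n=\lvert\bftau\rvert_{\tL\tO}$; summing over $\bftau$ grouped by the value of $n$ then gives~\eqref{eqn_gamma_components}. I would establish this by bookkeeping the five kinds of factors in $W(\bftau)$. The number of $(1-\alpha-\beta)$-factors is the number of $(\tL,\tO)$ descents, namely $n$. Next, each of the $N$ even positions contributes exactly one $(1-\alpha)^{-1}$ (carried rightward when it is a $P_1$, leftward — into the left boundary entry at position $0$ — when it is a $Q_1$) and, symmetrically, each odd position one $(1-\beta)^{-1}$; hence the denominator is $((1-\alpha)(1-\beta))^{N}$. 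Of the $N$ factors $(1-\beta)^{-1}$, exactly $n$ come from the $1-ab$ at descents, and the remaining $N-n$ accompany standalone factors $\pm a$, each carrying one $\alpha$; so the numerator power of $\alpha$ is $N-n$, and symmetrically that of $\beta$ is $N-n$. Finally the sign is $(-1)^{\epsilon+p_{\tL\tL}}$, where $p_{\tL\tL}$ counts the $(\tL,\tL)$ pairs and $\epsilon=1$ if $\bftau_{2N-1}=\tL$ and $\epsilon=0$ otherwise; since $\lvert\bftau\rvert_\tL=p_{\tL\tL}+n+\epsilon$, this reduces modulo $2$ to $(-1)^{n+\lvert\bftau\rvert_\tL}$. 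Assembling these five counts yields $W(\bftau)=(-1)^{n+\lvert\bftau\rvert_\tL}\compPrefactor_{N,n}$.

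The main obstacle is exactly this last bookkeeping: one must check that the per-position accounting of the denominators and of the standalone $a,b$ factors is exhaustive and non-overlapping across every transition and boundary case, including both ends of the string. Once the rank-one telescoping is in place, however, this is a finite — if delicate — verification, and the $\infty$-blocks require no separate treatment, being subsumed by the conventions $\alpha^\infty=\beta^\infty=0$ that make $Q_1,Q_0$ the limiting matrices.
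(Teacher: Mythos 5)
Your proof is correct, and it reaches the theorem by a genuinely different evaluation mechanism than the paper, even though the initial expansion is the same object in disguise: your splitting $A_1^k=\alpha^kP_1+A_1^\infty$, $A_0^\ell=\beta^\ell P_0+A_0^\infty$ is exactly the paper's identity $A_1^{k}A_0^{\ell}=((1-\alpha)(1-\beta))^{-1}\bigl(T_{\tO\tO}+\alpha^{k}T_{\tL\tO}+\beta^{\ell}T_{\tO\tL}+\alpha^{k}\beta^{\ell}T_{\tL\tL}\bigr)$ before the two factors of a pair are multiplied out (one checks, e.g., $T_{\tL\tO}=(1-\alpha)(1-\beta)\,P_1A_0^\infty$). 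The divergence is in how the $2^{2N}$ matrix products are evaluated. The paper notes that the four matrices $T_u$ are closed under multiplication up to the scalars $\pm\alpha\beta$ and $\pm(\alpha+\beta-1)$, records the $4\times4$ multiplication table, and proves by induction explicit formulas for the sign, the exponent of $(\alpha+\beta-1)$, and the surviving matrix type in terms of $\lvert\bftau\rvert_{\tL}$ and $\lvert\bftau\rvert_{\tL\tO}$. You instead use that all four factors are rank one, so the top-left entry telescopes into two boundary entries times $2N-1$ consecutive inner products, and you evaluate this by local accounting. I verified your local data: the inner product equals $1-ab=(1-\alpha-\beta)/((1-\alpha)(1-\beta))$ precisely at $(\tL,\tO)$ transitions of either parity, and otherwise takes the values $-b,1,a$ (even to odd) or $-a,1,b$ (odd to even), with boundary entries $1$ or $b$ on the left and $-a$ or $1$ on the right; the sign identity $\lvert\bftau\rvert_{\tL}=p_{\tL\tL}+n+\epsilon$ holds; and the assignment of exactly one factor $(1-\alpha)^{-1}$ to each even position and one $(1-\beta)^{-1}$ to each odd position is exhaustive and non-overlapping, so indeed $W(\bftau)=(-1)^{n+\lvert\bftau\rvert_{\tL}}\compPrefactor_{N,n}$ and grouping by $n$ gives~\eqref{eqn_gamma_components}. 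The trade-off is that the paper's closure table plus induction is mechanical and keeps the matrix structure visible throughout, while your telescoping collapses everything to scalars in one stroke at the cost of a delicate but finite boundary-and-transition case check, which your sketch carries out correctly; the $\infty$-blocks are handled identically in both arguments via $\alpha^\infty=\beta^\infty=0$.
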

Specializing to the sum-of-digits case $\alpha=\e(\vartheta)/2$, $\beta=\e(-\vartheta)/2$, we immediately obtain the following corollary.
\begin{corollary} 
Setting 
\begin{equation*}
\begin{aligned}
\compPrefactor_{N,n}(\vartheta)\eqdef
\compPrefactor_{N,n}\biggl(\frac{\e(\vartheta)}2,\frac{\e(-\vartheta)}2\biggr)
&=
\frac{\bigl(4-4\cos\vartheta\bigr)^n}
{\hspace{2pt}(5-4\cos\vartheta)^N},\\
\comp_{N,n}(\vartheta)\eqdef
\comp_{N,n}\biggl(\frac{\e(\vartheta)}2,\frac{\e(-\vartheta)}2\biggr)&=
\sum_{\substack{\bftau\in \{\tO,\tL\}^{2N}\\\bftau\cdot\bft<\infty\\
\lvert\bftau\rvert_{\tL\tO}=n}}
(-1)^{\lvert\bftau\rvert_{\tL}}
\frac{\e\bigl(\bftau\cdot(\teven-\todd)\vartheta\bigr)}{2^{\bftau\cdot(\teven+\todd)}},
\end{aligned}
\end{equation*}
we have
\begin{equation}\label{eqn_gamma_kl_rho_alt}
\gamma_t(\vartheta)=
\sum_{0\leq n\leq N}\
(-1)^n
\compPrefactor_{N,n}(\vartheta)
\comp_{N,n}(\vartheta).
\end{equation}

\end{corollary}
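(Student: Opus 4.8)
The Corollary is the substitution $\alpha=\e(\vartheta)/2,\ \beta=\e(-\vartheta)/2$ into Theorem~\ref{thm_components}, using~\eqref{eqn_gamma_generalization} to identify $\gamma_t(\vartheta)$ with $\gamma_\bft$, together with the elementary simplifications $(\alpha\beta)^{N-n}=4^{-(N-n)}$, $1-\alpha-\beta=1-\cos\vartheta$, and $(1-\alpha)(1-\beta)=5/4-\cos\vartheta$, which turn $\compPrefactor_{N,n}$ into $(4-4\cos\vartheta)^n/(5-4\cos\vartheta)^N$ after clearing $4^N$; the side condition $\bftau\cdot\bft<\infty$ merely drops the terms already killed by $\alpha^\infty=\beta^\infty=0$. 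So the content is Theorem~\ref{thm_components}, which I now address by expanding the matrix product $M(\bft)$ directly. The plan is to write each factor as a sum of two rank-one matrices and multiply out. Concretely, I would first check the identities $A_1^k=A_1^\infty+\alpha^k Q_1$ and $A_0^\ell=A_0^\infty+\beta^\ell Q_0$, where $Q_1=\left(\begin{smallmatrix}1&-\beta/(1-\alpha)\\0&0\end{smallmatrix}\right)$ and $Q_0=\left(\begin{smallmatrix}0&0\\-\alpha/(1-\beta)&1\end{smallmatrix}\right)$; these hold uniformly for $k,\ell\in\NI$ under $\alpha^\infty=\beta^\infty=0$. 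Substituting into~\eqref{eqn_Mt_def} and expanding over the $2^{2N}$ choices indexes the terms by $\bftau\in\{\tO,\tL\}^{2N}$ (with $\bftau_j=\tL$ marking the choice of $Q$), and the monomial accumulated by a given $\bftau$ is exactly $\alpha^{\bftau\cdot\teven}\beta^{\bftau\cdot\todd}$, since the even (odd) slots carry the $k_i$ ($\ell_i$). Thus
\[
\gamma_\bft(\alpha,\beta)=\sum_{\bftau\in\{\tO,\tL\}^{2N}}\alpha^{\bftau\cdot\teven}\beta^{\bftau\cdot\todd}\,(1,0)\,R_\bftau\,(1,0)^{\mathsf{T}},
\]
where $R_\bftau$ is the ordered product of the chosen rank-one matrices ($A_1^\infty$ or $Q_1$ at even slots, $A_0^\infty$ or $Q_0$ at odd slots).

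The crux is the scalar $(1,0)R_\bftau(1,0)^{\mathsf{T}}$. As all four building blocks are rank one, the product telescopes into $2N-1$ consecutive row$\cdot$column inner products times two boundary scalars. Writing $x=\alpha/(1-\beta)$ and $y=\beta/(1-\alpha)$, a short table of these inner products shows that each $\tL\tO$ adjacency of $\bftau$ contributes $1-xy=(1-\alpha-\beta)/((1-\alpha)(1-\beta))$, each $\tO\tL$ contributes $1$, while $\tO\tO$ and $\tL\tL$ contribute $x$ or $y$ (respectively $-y$ or $-x$) according to the parity of the slot. Crucially, the two boundary scalars coincide with the transition values obtained by \emph{augmenting} $\bftau$ with a virtual $\tO$ before position $0$ and a virtual $\tL$ after position $2N-1$; with this device $(1,0)R_\bftau(1,0)^{\mathsf{T}}$ becomes a single product of transition factors over the augmented word $\hat\bftau$. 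I would then establish the key identity
\[
(1,0)\,R_\bftau\,(1,0)^{\mathsf{T}}=(-1)^{\,n+|\bftau|_\tL}(xy)^{N-n}(1-xy)^{n}=(-1)^{\,n+|\bftau|_\tL}\compPrefactor_{N,n}(\alpha,\beta),\qquad n=|\bftau|_{\tL\tO},
\]
after which grouping the sum by $n$ and factoring out $(-1)^n\compPrefactor_{N,n}$ leaves exactly $\comp_{N,n}$, proving~\eqref{eqn_gamma_components}.

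I expect the parity and sign bookkeeping inside this identity to be the main obstacle, and the augmentation is what tames it. Decomposing $\hat\bftau$ into maximal runs, the convention ``$\tO$ first, $\tL$ last'' forces exactly $n+1$ runs of each letter; hence $\hat\bftau$ has $n$ descents $\tL\tO$ (the source of $(1-xy)^n$), $|\bftau|_\tL-n$ same-letter pairs $\tL\tL$ (giving the sign $(-1)^{|\bftau|_\tL-n}=(-1)^{n+|\bftau|_\tL}$), and $2N-|\bftau|_\tL-n$ pairs $\tO\tO$. Counting the $x$- and $y$-contributions of the same-letter pairs shows that the total $x$-exponent and $y$-exponent sum to $2N-2n$, so it remains to see that the two exponents are equal.

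Equality of the two exponents follows from a telescoping evaluation. Setting $\sigma_j=+1$ or $-1$ according as $\hat\bftau_j=\tO$ or $\tL$, the difference of exponents equals $\sum_{j=-1}^{2N-1}(-1)^j\sigma_j\tfrac{1+\sigma_j\sigma_{j+1}}{2}$, since the parity weight of a same-letter pair is $(-1)^j\sigma_j$ with the opposite sign for $\tL\tL$. Using $\sigma_j^2=1$ and reindexing, the interior terms cancel and this collapses to
\[
\sum_{j=-1}^{2N-1}(-1)^j\sigma_j\frac{1+\sigma_j\sigma_{j+1}}{2}=\tfrac12\bigl(-\sigma_{-1}-\sigma_{2N}\bigr)=0,
\]
the vanishing occurring precisely because the prepended letter is $\tO$ (so $\sigma_{-1}=+1$) and the appended one is $\tL$ (so $\sigma_{2N}=-1$). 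Hence each exponent equals $N-n$, which pins the factor $(xy)^{N-n}$ and completes the key identity, and with it Theorem~\ref{thm_components}. The Corollary then follows by the substitution and trigonometric simplification described in the first paragraph.
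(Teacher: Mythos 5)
Your proposal is correct, and while it shares the paper's overall strategy --- expand the matrix product $M(\bft)$ into a sum over words $\bftau\in\{\tO,\tL\}^{2N}$ and identify the scalar contribution of each word as $\pm(\alpha\beta)^{N-n}(1-\alpha-\beta)^n/((1-\alpha)(1-\beta))^N$ with $n=\lvert\bftau\rvert_{\tL\tO}$ --- the execution is genuinely different. The paper splits each \emph{pair} $A_1^{k_j}A_0^{\ell_j}$ into four matrices $T_{\tO\tO},T_{\tL\tO},T_{\tO\tL},T_{\tL\tL}$ and closes the argument with a $4\times4$ multiplication table together with an induction tracking the three invariants $\nu,\sigma,\lambda$. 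You instead split each \emph{single} factor as $A_1^k=A_1^\infty+\alpha^kQ_1$ and $A_0^\ell=A_0^\infty+\beta^\ell Q_0$ and exploit the fact that all four pieces are rank one, so that $(1,0)R_{\bftau}(1,0)^{\mathsf T}$ telescopes into a transfer-matrix-style product of scalar transition weights over the augmented word $\tO\,\bftau\,\tL$; I have checked your transition table, the boundary-augmentation trick, the run-count bookkeeping ($n+1$ runs of each letter, hence $\lvert\bftau\rvert_\tL-n$ factors $\tL\tL$ giving the sign, and $2N-2n$ same-letter pairs in total), and the telescoping identity $\sum_{j=-1}^{2N-1}(-1)^j(\sigma_j+\sigma_{j+1})/2=\tfrac12(-\sigma_{-1}-\sigma_{2N})=0$ that equalizes the $x$- and $y$-exponents; all are correct. (Indeed your decomposition refines the paper's: $T_{u}$ for $u=(\bftau_{2j},\bftau_{2j+1})$ equals $(1-\alpha)(1-\beta)$ times the product of your two rank-one matrices for those slots.) What your route buys is a closed-form, induction-free explanation of \emph{why} the quantity $\lvert\bftau\rvert_{\tL\tO}$ governs the decomposition --- it literally counts the $(1-xy)$-transitions in the product --- whereas the paper's table-plus-induction is shorter to verify mechanically but leaves that structure implicit. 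The final specialization to the Corollary (substitution $\alpha=\e(\vartheta)/2$, $\beta=\e(-\vartheta)/2$ via~\eqref{eqn_gamma_generalization}, the trigonometric simplification of $\compPrefactor_{N,n}$, and the observation that the condition $\bftau\cdot\bft<\infty$ only discards terms already annihilated by $\alpha^\infty=\beta^\infty=0$) is exactly as in the paper.
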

From this representation we can clearly see the fact~\eqref{eqn_uniform_convergence}, proved before, that prolonging an arbitrary block of $\tO$s or $\tL$s step by step causes uniform convergence, of order $1$ and rate $1/2$, of the corresponding characteristic functions $\gamma_t(\vartheta)$.
In particular, words $\bftau$ hitting an infinite block do not contribute to the sum.

\begin{example}
Let $t=153=(10011001)_2$. We have
$\teven=(1,0,2,0,1,0)$ and $\todd=(0,2,0,2,0,\infty)$.
Distinguishing between $32$ cases (with the help of Sage~\cite{sage}) we obtain
\begin{equation*}
\begin{aligned}
\comp_{3,0}(\vartheta)&=1,\\
\comp_{3,1}(\vartheta)&=
-\frac{33}{64}\e(-2\vartheta)
+\frac{17}{64}\e(-\vartheta)
+\frac{31}{256}
-\frac{17}{16}\e(\vartheta)
-\frac14\e(2\vartheta),\\
\comp_{3,2}(\vartheta)&=
\frac1{16}\e(-4\vartheta)
-\frac1{16}\e(-3\vartheta)
+\frac1{64}\e(-2\vartheta)
+\frac14\e(-\vartheta)
-\frac18
-\frac1{16}\e(\vartheta)
+\frac{9}{32}\e(2\vartheta)
+\frac14\e(3\vartheta)
,\\
\comp_{3,3}(\vartheta)&=-\frac1{16}\e(4\vartheta).
\end{aligned}
\end{equation*}

\begin{figure}[!ht]
\begin{tabular}{cc}
\includegraphics[width=0.45\linewidth]{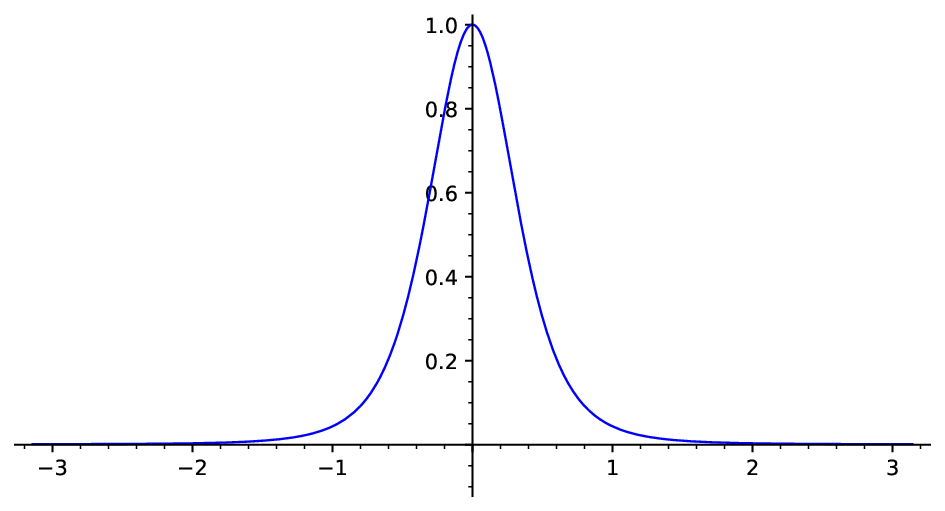}&
\includegraphics[width=0.45\linewidth]{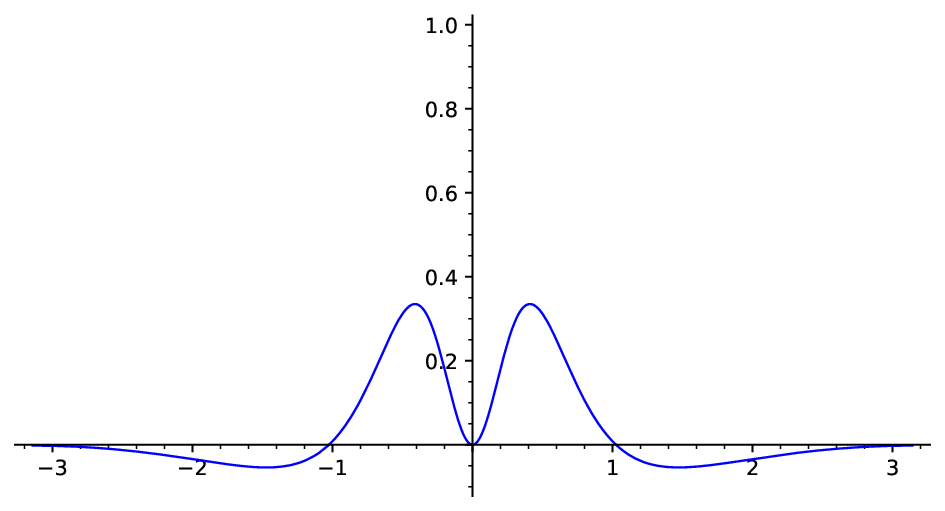}\\
\includegraphics[width=0.45\linewidth]{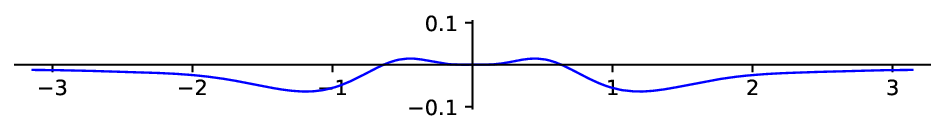}&
\includegraphics[width=0.45\linewidth]{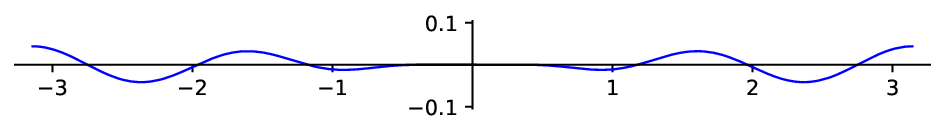}
\end{tabular}
\label{fig_components}
\caption{Real part of $(-1)^n\compPrefactor_{3,n}(\vartheta)\comp_{3,n}(\vartheta)$ for $t=153$ and $n\in\{0,1,2,3\}$.}
\end{figure}
\end{example}

\bigskip\noindent\textbf{Nonnegative coefficients.}
The representation~\eqref{eqn_comp_def} has the advantage that the components are ordered by the size of the central quantity $\lvert\bftau\rvert_{\tL\tO}$.
Meanwhile, the sign of the coefficients of $\compPrefactor_{N,n}$ is not so clear.

We state an alternative form of Theorem~\ref{thm_components} ``reversing these two parameters'': the signs of the appearing coefficients are obvious in this new formulation, but as a drawback, $\lvert\bftau\rvert_{\tL\tO}$ will appear in the summands.
To this end, let us use the abbreviation
\begin{equation*}
\gammaINF_m(\alpha,\beta) \eqdef
\compPrefactor_{m,0}(\alpha,\beta)
=\left(\frac{\beta\alpha}{(1-\alpha)(1-\beta)}\right)^m,
\end{equation*}
which is a bivariate power series in $\alpha$ and $\beta$ having nonnegative coefficients.
We express $\compPrefactor_{N,n}$ as a linear combination of the quantities $\compPrefactor_{m,0}$, where $N-n\leq m\leq N$,
by expanding the binomial expansion of $(1-\alpha-\beta)^n = ((1-\alpha)(1-\beta) -\alpha\beta)^n$.
We obtain
\[\compPrefactor_{N,n}=\sum_{j=0}^n (-1)^{n-j} \gammaINF_{N-j}\]
by
\eqref{eqn_component_decomp}, and thus the following proposition holds true.

\begin{proposition}\label{prp_components}
Let $N\ge1$ be an integer.
For all $\bft \in \NI^{2N}$ we have
\begin{equation}\label{eqn_gamma_components_alt}
\gamma_{\bft}(\alpha,\beta)=
\sum_{\bftau\in \{\tO,\tL\}^{2N}}
(-1)^{\lvert\bftau\rvert_{\tL}} \alpha^{\bftau\cdot\teven}\beta^{\bftau\cdot\todd}
 \sum_{j=0}^{\lvert\bftau\rvert_{\tL\tO}} \binom{\lvert\bftau\rvert_{\tL\tO}}{j} (-1)^j \gammaINF_{N-j}(\alpha,\beta).
\end{equation}
\end{proposition}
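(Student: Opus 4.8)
The plan is to derive Proposition~\ref{prp_components} directly from Theorem~\ref{thm_components} by substituting the expansion of the prefactors $\compPrefactor_{N,n}$ into the simpler quantities $\gammaINF_m=\compPrefactor_{m,0}$ and then interchanging the order of summation. The single algebraic input is the identity
\[
\compPrefactor_{N,n}=\sum_{j=0}^{n}\binom{n}{j}(-1)^{n-j}\gammaINF_{N-j},
\]
which is obtained from the definition~\eqref{eqn_component_decomp} by writing $1-\alpha-\beta=(1-\alpha)(1-\beta)-\alpha\beta$ and expanding $(1-\alpha-\beta)^n$ with the binomial theorem: each resulting summand is of the form $\bigl(\alpha\beta/((1-\alpha)(1-\beta))\bigr)^{m}=\gammaINF_m$ with $N-n\le m\le N$, and collecting the powers (after the index shift $m=N-j$) yields the stated binomial coefficients and signs.

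First I would insert this identity into the representation~\eqref{eqn_gamma_components} of Theorem~\ref{thm_components}, obtaining
\[
\gamma_{\bft}=\sum_{0\le n\le N}(-1)^n\,\comp_{N,n}\sum_{j=0}^{n}\binom{n}{j}(-1)^{n-j}\gammaINF_{N-j}.
\]
The outer sign collapses via $(-1)^n(-1)^{n-j}=(-1)^{j}$, so the double sum reduces to $\sum_{n}\comp_{N,n}\sum_{j=0}^{n}\binom{n}{j}(-1)^{j}\gammaINF_{N-j}$. Next I would expand each component $\comp_{N,n}$ by its definition~\eqref{eqn_comp_def}, replacing it with the sum over words $\bftau\in\{\tO,\tL\}^{2N}$ satisfying $\lvert\bftau\rvert_{\tL\tO}=n$ of the terms $(-1)^{\lvert\bftau\rvert_{\tL}}\alpha^{\bftau\cdot\teven}\beta^{\bftau\cdot\todd}$.

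The final step is to merge the outer sum over $n$ with the sum over words. Since every word $\bftau$ contributes to exactly one component, namely the one indexed by $n=\lvert\bftau\rvert_{\tL\tO}$, the pair of sums over $n$ and over $\bftau$ with $\lvert\bftau\rvert_{\tL\tO}=n$ collapses into a single unrestricted sum over $\bftau\in\{\tO,\tL\}^{2N}$, with the inner index $j$ then ranging up to $\lvert\bftau\rvert_{\tL\tO}$. This reproduces the right-hand side of~\eqref{eqn_gamma_components_alt} verbatim.

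Everything takes place in the ring $\mathbb{C}[[\alpha,\beta]]$ of formal power series, the convergence and analytic questions having already been settled in Section~\ref{sec_preliminaries}, so all of these manipulations are purely formal rearrangements of finite sums and require no analytic justification. The only point demanding care is the bookkeeping in the binomial identity: correctly tracking the index shift $m=N-j$, the residual sign $(-1)^{n-j}$, and the admissible range $N-n\le m\le N$ for the exponent. I expect this to be the main—though entirely routine—obstacle; once the identity is verified, the rest is a mechanical interchange of finite summations.
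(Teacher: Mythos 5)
Your proposal is correct and matches the paper's own derivation exactly: the paper likewise obtains $\compPrefactor_{N,n}=\sum_{j=0}^n\binom{n}{j}(-1)^{n-j}\gammaINF_{N-j}$ by expanding $(1-\alpha-\beta)^n=((1-\alpha)(1-\beta)-\alpha\beta)^n$ and then substitutes this into Theorem~\ref{thm_components}, regrouping the sum over words $\bftau$. (If anything, your write-up is slightly more careful than the paper's, which drops the binomial coefficient in the displayed identity for $\compPrefactor_{N,n}$ --- evidently a typo, since the coefficient reappears in~\eqref{eqn_gamma_components_alt}.)
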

\begin{remark}
The main results above may be translated to statements on $c_{\bft}$, by substituting $\alpha=\e(\vartheta)/2$ and $\beta=\e(-\vartheta)/2$, and expanding the components.
This will be made explicit in Section~\ref{sec:simplest_cases}.
\end{remark}

\subsection{Remarks and corollaries}\label{sec_remarks}
Several remarks and observations on the main theorem are in order.
\begin{enumerate}
\item (Graduation).
The decomposition~\eqref{eqn_gamma_components} of $\gamma_\bft$ into components satisfies the relation
\begin{equation*}
\bigl(\mathfrak g_{N,n}\comp_{N,n}\bigr)(\vartheta)=\LandauO\bigr(\vartheta^{2n}\bigr)\quad\mbox{as}\quad\vartheta\rightarrow0.
\end{equation*}
\item (Base case).
Consider the (sum-of-digits) case that $\ell_{N-1}=\infty$.
If $\lvert \bftau\rvert_{\tL\tO}=0$, we have $\bftau=\tO^a\tL^b$ for some nonnegative integers $a,b$ such that $a+b=2N$.
The only contributing $\bftau$ with this property satisfies $b=0$ (since selecting an infinite block causes the contribution to be $0$), that is, $\bftau=(\tO,\ldots,\tO)$.
We therefore have
\begin{equation*}
\bigl(\mathfrak g_{N,0}\comp_{N,0}\bigr)(\vartheta)=
\mathfrak g_{N,0}(\vartheta)
=\gammaINF_N(\vartheta)
=\biggl(\frac{1}{5-4\cos\vartheta}\biggr)^N.
\end{equation*}

This ``base case'' always appears, but has to be completed by further components as soon as blocks of finite length are present.
If $\bft = (\infty,\infty,\ldots,\infty) \in \NI^{2N}$,
no other cases occur, and by Theorem \ref{thm_components} we have
$\gamma_\bft=\gammaINF_N$.

\item (Counting factors $\tL\tO$). Besides the number $\lvert\bftau\rvert_{\tL}$ of selected summands $k_j,\ell_j$, also the factor-counting function $\lvert\bftau\rvert_{\tL\tO}$ appears,
which counts occurrences of $\tL\tO$ at both even and odd indices.
That is, equations~\eqref{eqn_gamma_components} and~\eqref{eqn_gamma_components_alt} feature
the number of times that either a block of ones is selected while block of zeros next to it (to the left) is ignored, or a block of zeros is selected while the (left) adjacent block of ones is ignored.
Via the presence of the term $\lvert\bftau\rvert_{\tL\tO}$, the ordering of blocks enters into Equations~\eqref{eqn_gamma_components} and~\eqref{eqn_gamma_components_alt}.
In fact, the arrangement of blocks is \emph{only} detected by virtue of the term $\lvert\bftau\rvert_{\tL\tO}$,
which contributes strongly to our impression that this term is the ``right'' quantity to consider. 

Counting $\tL\tO$-blocks thus appears to be an integral component 
of the study of the correlation $\digitsum(n+t)-\digitsum(n)$, which came very much as a surprise to the authors.
We consider this as a manifestation of the principle --- which we might call ``nonlocality'' --- that blocks of $\tO$s or $\tL$s cannot be considered independently from each other when adding two binary integers,
caused by carries propagating through arbitrarily many adjacent blocks.
To borrow a formulation by J.~S.~Bell~\cite{Bell1980},
we consider it a merit of our theorem to bring out the dependence on this term so explicitly that it cannot be ignored.

\item (Summary).
We may therefore give the following intuitive explanation of Theorem~\ref{thm_components}.
As the \emph{length of the shortest block} tends to $\infty$, the characteristic function approaches the base case $\gammaINF_N$, since --- rougly --- the contribution of a block decreases with increasing length of the block.
As the \emph{number of blocks} increases, the corresponding sequence of base cases converges to a Gaussian, in a suitable sense, as we consider powers of a function $1+c\vartheta^2+\LandauO(\vartheta^3)$, where $c<0$.
The contributions of the non-base components impose a certain \emph{perturbation} of the Gaussian behaviour. These components are controlled by the finite blocks, whose contributions are mixed according to their order of appearance.

\end{enumerate}

%{{{ sec_dissection
\subsubsection{Dissection by infinite blocks}\label{sec_dissection}

Note that those $\bftau$ hitting an ``infinite blocks of $\tO$s or $\tL$s'' do not contribute to the sum.
This fact implies that the order in which the blocks of $\tO$s appear does not play a role if they are separated by ``infinite blocks of $\tL$s'', see Corollary~\ref{cor_swap} below.
Now and in the following, we let $\bfk\ast\bfl$ denote the \emph{interleaving} of the sequences $\bfk,\bfl\in\NI^{N}$:
\begin{equation}\label{eqn_interleaving_def}
\bigl(k_0,\ldots,k_{N-1}\bigr)\ast\bigl(\ell_0,\ldots,\ell_{N-1}\bigr)\eqdef
\bigl(k_0,\ell_0,k_1,\ell_1,\ldots,k_{N-1},\ell_{N-1}\bigr).
\end{equation}
Using this convenient notation, we can state the result as follows.
\begin{corollary}\label{cor_swap}
Let $\bfk=(k_0,\ldots,k_{N-1}),\bfl=(\ell_0,\ldots,\ell_{n-1})\in\NI^{N}$.
\begin{itemize}
\item If $\ell_0=\cdots=\ell_{N-1}=\infty$, we have
\begin{equation}\label{eqn_cor_swap_0}
\gamma_{\bfk\ast\bfl}=\gamma_{\sigma\bfk\ast\bfl}
\end{equation}
for any permutation $\sigma$ of $\{0,\ldots,{N-1}\}$, where
\[\sigma\bfk\eqdef\bigl(k_{\sigma^{-1}(0)},\ldots,k_{\sigma^{-1}(N-1)}\bigr).\]

\item Analogously, if $k_0,\ldots,k_{N-1}=\infty$, we have
\begin{equation}\label{eqn_cor_swap_1}
\gamma_{\bfk\ast\bfl}=\gamma_{\bfk\ast\sigma\bfl}
\end{equation}
for any permutation $\sigma$ of $\{0,\ldots,N-1\}$ such that $\sigma(N-1)=N-1$.
\end{itemize}
\end{corollary}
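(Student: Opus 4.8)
The cleanest route is to exploit the fact that the ``infinite powers'' $A_0^\infty$ and $A_1^\infty$ from~\eqref{eqn_powers_limit} are matrices of \emph{rank one}. Substituting these rank-one factors into the product~\eqref{eqn_Mt_def} collapses $M(\bfk\ast\bfl)$ into a product of scalars, after which the symmetry claims become transparent. (The same conclusion can be read off from Theorem~\ref{thm_components} by recognizing that, once the infinite blocks have killed the offending words $\bftau$, the sum defining $\comp_{N,n}$ reduces to an elementary symmetric polynomial; but the matrix telescoping is shorter and explains the boundary asymmetry directly.)

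For~\eqref{eqn_cor_swap_0}, assume $\ell_0=\cdots=\ell_{N-1}=\infty$, so every $A_0^{\ell_i}$ equals $A_0^\infty=\mathbf u\,\mathbf v^\top$ with $\mathbf u=\begin{pmatrix}1\\ \tfrac{\alpha}{1-\beta}\end{pmatrix}$ and $\mathbf v=\begin{pmatrix}1\\0\end{pmatrix}$. I would plug this into~\eqref{eqn_Mt_def} and telescope: each sandwiched factor $\mathbf v^\top A_1^{k_i}\mathbf u$ is a \emph{scalar} $s(k_i)$ depending only on $k_i$ (and on $\alpha,\beta$), and since $\mathbf v^\top\mathbf v=1$ the two end pieces also combine into scalars, yielding
\[
\gamma_{\bfk\ast\bfl}=\begin{pmatrix}1&0\end{pmatrix}M(\bfk\ast\bfl)\begin{pmatrix}1\\0\end{pmatrix}=\prod_{i=0}^{N-1}s(k_i),\qquad s(k)=\alpha^k+\frac{\alpha\beta(1-\alpha^k)}{(1-\alpha)(1-\beta)}.
\]
This product is manifestly invariant under any permutation of $k_0,\dots,k_{N-1}$, giving~\eqref{eqn_cor_swap_0}.

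For~\eqref{eqn_cor_swap_1}, assume $k_0=\cdots=k_{N-1}=\infty$, so every $A_1^{k_i}$ equals $A_1^\infty=\mathbf u'(\mathbf v')^\top$ with $\mathbf u'=\begin{pmatrix}\tfrac{\beta}{1-\alpha}\\1\end{pmatrix}$ and $\mathbf v'=\begin{pmatrix}0\\1\end{pmatrix}$. The same telescoping applies, but now the outermost-right factor of~\eqref{eqn_Mt_def} is $A_0^{\ell_{N-1}}$, which is a \emph{genuine} (non-rank-one) power and therefore is not absorbed into a symmetric scalar. One obtains
\[
\gamma_{\bfk\ast\bfl}=\frac{\beta}{1-\alpha}\;\Biggl(\prod_{i=0}^{N-2}s'(\ell_i)\Biggr)\;\frac{\alpha\,(1-\beta^{\ell_{N-1}})}{1-\beta},\qquad s'(\ell)=\beta^\ell+\frac{\alpha\beta(1-\beta^\ell)}{(1-\alpha)(1-\beta)},
\]
so that $\ell_0,\dots,\ell_{N-2}$ enter symmetrically through the product while $\ell_{N-1}$ occupies a distinguished factor. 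Hence $\gamma$ is invariant under precisely the permutations fixing the last index, which is~\eqref{eqn_cor_swap_1}.

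The main thing to get right is this \emph{boundary asymmetry}: in the first case no $k$-block is distinguished, whereas in the second the final $\ell$-block is. In the matrix picture the reason is structural --- the product~\eqref{eqn_Mt_def} terminates in an $A_0$-block, so when the $A_0$-powers are the rank-one ones (first case) the right end collapses cleanly, but when the $A_1$-powers are the rank-one ones (second case) the last $A_0^{\ell_{N-1}}$ survives. In the language of Theorem~\ref{thm_components} the same fact appears as the observation that $\lvert\bftau\rvert_{\tL\tO}$ counts descents $\tL\tO$ only at positions $j<2N-1$, so the rightmost coordinate (an $\ell$-position) has no right neighbour and is handled specially, forcing $\sigma(N-1)=N-1$. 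I would close by checking that the rank-one collapse is a legitimate identity of formal power series and that the convention $\alpha^\infty=\beta^\infty=0$ makes $s(\infty)$ and $s'(\infty)$ well-defined, so that infinite entries among the permuted block lengths are treated uniformly.
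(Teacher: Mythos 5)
Your proof is correct, and it takes a genuinely different route from the paper's. The paper proves the corollary as an application of Theorem~\ref{thm_components}: it identifies which words $\bftau$ survive (those not hitting an infinite block), and shows that each component $\comp_{N,n}$ then collapses to $(-1)^n$ times the elementary symmetric polynomial of degree $n$ in the quantities $\alpha^{k_j}$ (respectively, to $(1-\beta^{\ell_{N-1}})$ times the one in $\beta^{\ell_0},\ldots,\beta^{\ell_{N-2}}$), so the symmetry is visible \emph{component by component} --- this is the route you mention parenthetically and set aside. Your argument instead returns to the matrix product \eqref{eqn_Mt_def} and exploits the rank-one factorizations $A_0^\infty=\mathbf u\,\mathbf v^\top$, $A_1^\infty=\mathbf u'(\mathbf v')^\top$ to telescope the whole product into scalars; I checked the scalars $s(k)=\mathbf v^\top A_1^k\mathbf u$ and $s'(\ell)=(\mathbf v')^\top A_0^\ell\mathbf u'$ and the boundary factors $\mathbf v^\top\mathbf u'=\beta/(1-\alpha)$ and $(\mathbf v')^\top A_0^{\ell_{N-1}}\mathbf v=\alpha(1-\beta^{\ell_{N-1}})/(1-\beta)$, and they are all as you state; the convention $\alpha^\infty=\beta^\infty=0$ indeed makes $s(\infty)$ consistent with $\mathbf v^\top A_1^\infty\mathbf u$. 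What each approach buys: yours is more elementary (it bypasses the main theorem entirely), yields an explicit closed product formula for $\gamma_{\bfk\ast\bfl}$ in these degenerate cases, and explains the boundary asymmetry structurally via which end of the product fails to collapse; the paper's version is weaker as a standalone argument but demonstrates the intended use of the component decomposition and gives the slightly finer information that each $\comp_{N,n}$ is separately permutation-invariant. Both proofs are identities of formal power series over $\mathbb C[[\alpha,\beta]]$, so no analytic care is needed beyond what you already note.
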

Equations~\eqref{eqn_cor_swap_0} and~\eqref{eqn_cor_swap_1} are identities of formal power series in two variables $\alpha$ and $\beta$.

\begin{proof}[Proof of Corollary~\ref{cor_swap}]
We prove the first statement.
As $\bftau$ runs through $\{\tO,\tL\}^{2N}$,
every set $\subseteq \{0,2,4,\ldots,2N-2\}$ occurs as the set of positions
of appearances of $\tL\tO$.
Since we consider ``infinite blocks of $\tO$s'' only,
vectors $\bftau$ having an occurrence of $\tL\tO$ at an odd position do not contribute to the sum.
It follows that
\[\comp_{N,n}=
\sum_{\substack{\bftau\in \{\tO,\tL\}^{2N}\\
\lvert\bftau\rvert_{\tL\tO}=n}}
(-1)^{\lvert\bftau\rvert_{\tL}}
\alpha^{\bftau\cdot\teven}\beta^{\bftau\cdot\todd}
=
(-1)^{n}
\sum_{\substack{J\subseteq\{0,\ldots,N-1\}\\\lvert J\rvert=n}}
\alpha^{\sum_{j\in J}k_j}.
\]
This expression is obviously invariant under the action of $\sigma$ on $\bfk$.

\noindent In the proof of the second statement,
we need to consider $\ell_{N-1}$ separately, since $\tL\tO$ cannot appear at position $2N-1$.
Analogously to the first case, the set of positions of appearances of $\tL\tO$ is a subset of $\{1,3,5,\ldots,2N-3\}$, and every subset actually appears.
Therefore $\comp_{N,N}=0$, and
\[\comp_{N,n}=
\sum_{\substack{\bftau\in \{\tO,\tL\}^{2N}\\
\lvert\bftau\rvert_{\tL\tO}=n}}
(-1)^{\lvert\bftau\rvert_{\tL}}
\alpha^{\bftau\cdot\teven}\beta^{\bftau\cdot\todd}
=
\bigl(1-\beta^{\ell_{N-1}}\bigr)
(-1)^n
\sum_{\substack{J\subseteq\{0,\ldots,N-2\}\\\lvert J\rvert=n}}
\beta^{\sum_{j\in J}\ell_j}.
\]
for $0\leq n<N-1$.
This expression is not changed by the action of $\sigma$ on $\bfl$, since $\sigma(N-1)=N-1$.
\end{proof}

\begin{example}
If
\[(t_m)_2=\tL^m\tO\tL^m\tO\tO\tL^m\tO\tL^m,\quad (t'_m)_2=\tL^m\tO\tL^m\tO\tL^m\tO\tO\tL^m,\]
by the corollary we have identical limits
\[\lim_{m\rightarrow\infty}c_{t_m}
=\lim_{m\rightarrow\infty}c_{t_m'},
\]
which can be evaluated to yield the value $2737/(2^63^4)$.
\end{example}
We note that in the case of only two blocks of adjacent $\tO$s, we have equality of the $c_t$-values \emph{for each $m$}.
This follows from the \emph{digit reversal property} proved by Morgenbesser and the second author~\cite{MorgenbesserSpiegelhofer2012}.
The special case $q=2$ (the binary sum-of-digits) of the cited result also follows from our main theorem, as we show now.
%}}}
%{{{ sec_curious
\subsubsection{The ``reverse order property''}\label{sec_curious}
Morgenbesser and the second author~\cite{MorgenbesserSpiegelhofer2012} proved that $\gamma_t$ is invariant under reversal of the base-$q$ digits.
It remained unclear, intuitively, why such a ``curious property'' should hold;
we give a new proof, for the binary case, shedding some light on the situation. In particular, using Theorem~\ref{thm_components}, the (almost-) invariance of $\lvert\cdot\rvert_{\tL\tO}$ under reflection explains this unexpected behaviour.

\begin{corollary} \label{cor_reversal}
Assume that $t'$ is the integer obtained from $t$ by reversing its binary expansion. Then $\gamma_t=\gamma_{t'}$, in particular, $c_t=c_{t'}$.
\end{corollary}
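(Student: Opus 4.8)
The plan is to read the decomposition straight off Theorem~\ref{thm_components} and exhibit an explicit involution on words that matches the components of $\gamma_t$ and $\gamma_{t'}$ term by term. First I would reduce to odd $t$, using $\gamma_{2t}=\gamma_t$ together with the fact that trailing zeros of $t$ become (droppable) leading zeros of $t'$; for odd $t$ with binary expansion~\eqref{eqn_t_binary} the reversed integer $t'$ is again odd, and at the level of extended binary expansions the passage $t\mapsto t'$ simply reverses the order of the finite block lengths while keeping the infinite leading block in last position. Concretely, if $\bft=(k_0,\ell_0,\ldots,k_{N-1},\infty)$ then $\bft'=(k_{N-1},\ell_{N-2},k_{N-2},\ldots,\ell_0,k_0,\infty)$, so that $k'_i=k_{N-1-i}$ and $\ell'_i=\ell_{N-2-i}$ for $0\le i\le N-2$. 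Since the prefactors $\compPrefactor_{N,n}$ in~\eqref{eqn_gamma_components} do not depend on $\bft$ at all, Theorem~\ref{thm_components} reduces the claim $\gamma_{\bft}=\gamma_{\bft'}$ to the identities $\comp_{N,n}(\bft)=\comp_{N,n}(\bft')$ for $0\le n\le N$, where $\comp_{N,n}(\bft)$ denotes the component~\eqref{eqn_comp_def} formed from the data $\teven,\todd$ of $\bft$.

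Next I would introduce the candidate bijection on words $\bftau\in\{\tO,\tL\}^{2N}$: let $\Phi$ reverse the first $2N-1$ letters and fix the last, i.e. $(\Phi\bftau)_j=\bftau_{2N-2-j}$ for $0\le j\le 2N-2$ and $(\Phi\bftau)_{2N-1}=\bftau_{2N-1}$. This $\Phi$ is an involution; because the $\tL$-blocks sit at even positions and the $\tO$-blocks at odd positions, and $2N-2-(\text{even})$ is again even, $\Phi$ sends $\tL$-blocks to $\tL$-blocks and $\tO$-blocks to $\tO$-blocks while reversing their indices. Hence it preserves $\lvert\bftau\rvert_{\tL}$ (and thus the sign $(-1)^{\lvert\bftau\rvert_\tL}$), and a direct substitution $m=N-1-i$ (respectively $m=N-2-i$) shows that, on the words that actually contribute (those with $\bftau_{2N-1}=\tO$, since selecting the infinite block forces $\beta^\infty=0$), the monomial $\alpha^{\bftau\cdot\teven}\beta^{\bftau\cdot\todd}$ computed from $\bft$ equals $\alpha^{(\Phi\bftau)\cdot\teven'}\beta^{(\Phi\bftau)\cdot\todd'}$ computed from $\bft'$.

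The heart of the matter — and the step I expect to be the main obstacle — is to verify that $\Phi$ also preserves $\lvert\bftau\rvert_{\tL\tO}$ on contributing words, so that $\Phi$ carries the summands of $\comp_{N,n}(\bft)$ bijectively onto those of $\comp_{N,n}(\bft')$ for each fixed $n$. This is delicate because a genuine reflection turns each pattern $\tL\tO$ into a pattern $\tO\tL$, so that $\lvert\cdot\rvert_{\tL\tO}$ is only \emph{almost} invariant. I would make this precise with the elementary identity $\#\{\tL\tO\}-\#\{\tO\tL\}=[\text{first letter}=\tL]-[\text{last letter}=\tL]$, applied to the subword $w=\bftau_0\cdots\bftau_{2N-2}$. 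Writing $A$ and $A'$ for the numbers of $\tL\tO$- and $\tO\tL$-patterns inside $w$, one finds $\lvert\bftau\rvert_{\tL\tO}=A+[\bftau_{2N-2}=\tL]$, where the extra term is the contribution of the last pair (using $\bftau_{2N-1}=\tO$), while $\lvert\Phi\bftau\rvert_{\tL\tO}=A'+[\bftau_0=\tL]$, since the reflected interior pairs contribute $A'$ and the new boundary pair is $((\Phi\bftau)_{2N-2},(\Phi\bftau)_{2N-1})=(\bftau_0,\bftau_{2N-1})$.

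The boundary identity applied to $w$ gives exactly $A-A'=[\bftau_0=\tL]-[\bftau_{2N-2}=\tL]$, so the two correction terms cancel and $\lvert\Phi\bftau\rvert_{\tL\tO}=\lvert\bftau\rvert_{\tL\tO}$. This is precisely the ``almost-invariance of $\lvert\cdot\rvert_{\tL\tO}$ under reflection'' referred to above, and it is the fixed final letter $\tO$ (forced by the infinite leading block) that makes the boundary terms match up. With all four invariants accounted for — the sign $(-1)^{\lvert\bftau\rvert_\tL}$, the $\alpha$- and $\beta$-exponents, and $\lvert\cdot\rvert_{\tL\tO}$ — the involution $\Phi$ identifies the two decompositions, giving $\gamma_t=\gamma_{t'}$; the equality $c_t=c_{t'}$ is then immediate from~\eqref{eqn_Delta_def}.
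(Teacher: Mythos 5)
Your proof is correct and follows essentially the same route as the paper: both arguments apply the component decomposition and construct a restricted-reversal involution on the words $\bftau$ that fixes the position of the infinite leading $\tO$-block, preserves the sign and the exponents, and preserves $\lvert\bftau\rvert_{\tL\tO}$ on the contributing words. The only differences are cosmetic (the paper pads $\bft$ with a leading $(0,\infty)$ to symmetrize the reversal, while you fix the last letter instead), and your boundary identity $\#\{\tL\tO\}-\#\{\tO\tL\}=[\text{first}=\tL]-[\text{last}=\tL]$ supplies exactly the detail the paper leaves unstated when it asserts $\lvert\bftau\rvert_{\tL\tO}=\lvert\bftau'\rvert_{\tL\tO}$.
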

\begin{proof}
It is sufficient to consider tuples
\[\bft=\bigl(0,\infty,t_2,\ldots,t_{2N-2},\infty\bigr).
\]
That is, 
$\bft=\bfk\ast\bfl$, where
$\bfk=(0,k_1,\ldots,k_{N-1})$ and $\bfl=(\infty,\ell_1,\ldots,\ell_{N-2},\infty)$. 
Digit reversal corresponds to defining $t'_2=t_{2N-2},\ldots,t'_{2N-2}=t_2$.

We also define the restricted reversal of $\bftau\in\{\tO,\tL\}^{2N}$ by
$\bftau'_0=\bftau_0$, $\bftau'_j=\bftau_{2N-j}$ for $1\leq j\leq 2N-1$.
Terms in~\eqref{eqn_Delta_kl_R} such that $\bftau_1=\tL$ or $\bftau_{2N-1}=\tL$ do not contribute, and the remaining choices of $\bftau$ satisfy the central property
\[\lvert\bftau\rvert_{\tL\tO}=\lvert\bftau'\rvert_{\tL\tO}.\]
Also,
\[\bftau'\cdot\teven'=\bftau\cdot\teven,
\quad\mbox\quad
\bftau'\cdot\todd'=\bftau\cdot\todd.
\]
Thus we have found a bijection on the set $\{\tO,\tL\}^{2N}$ for which each of the summands in~\eqref{eqn_Delta_kl_R}, for the case of $\bft$, equals a summand for the case $\bft'$.
\end{proof}

\begin{example}
We have
\[\gamma_{(\mathtt{1011010011})_2}(\pi)=
\gamma_{(\mathtt{1100101101})_2}(\pi)=\frac{17}{384}.\]
\end{example}
%}}}
%{{{ sec_Thue_Morse
\subsubsection{Correlations of the Thue--Morse sequence}\label{sec_Thue_Morse}
Let us consider the special values $\gamma_t\eqdef\gamma_t(\pi)$,
corresponding to $\alpha=\beta=-1/2$.
These values are correlations of the Thue--Morse sequence, as considered in the survey~\cite{Mauduit2001} by Mauduit, and the article~\cite{BaakeCoons2024} by Baake and Coons.
Let $t\ge1$ be an integer, and $\bft\in\NI^{2N}$ the corresponding sequence of lengths of binary blocks 
(where $\ell_{N-1}=\infty$, and $k_j, \ell_j<\infty$ otherwise).
By~\eqref{eqn_gamma_kl_rho_alt}, we obtain the particularly simple decomposition into components
\begin{equation}\label{eqn_gamma_kl_rho_alt2}
\gamma_t=
\biggl(\frac19\biggr)^N
\sum_{0\leq n\leq N}
(-8)^n
C_{N,n},
\end{equation}
where
\begin{equation*}
C_{N,n}\eqdef\sum_{\substack{\bftau\in \{\tO,\tL\}^{2N-1}\\
\tau_{2N-1}=0\\
\lvert\bftau\rvert_{\tL\tO}=n}}
\bigl(-1\bigr)^{\lvert\bftau\rvert_{\tL}}
\biggl(-\frac12\biggr)^{\bftau\cdot\bft}.
\end{equation*}
In the simplest case that $t=1$, we have $\bft=(1,\infty)$,
and only two summands, corresponding to $\bftau\in\{\tO\tO,\tL\tO\}$.
Equation~\eqref{eqn_gamma_kl_rho_alt} yields $\gamma_1=1/9-8/18=-1/3$, which is consistent with $\gamma_1(\vartheta)=\e(\vartheta)/(2-\e(-\vartheta))$.

The values $\gamma_t$ satisfy the recurrence~\cite{Mauduit2001}
\begin{equation*}
\gamma_0=1,\quad
\gamma_1=-1/3,\quad
\gamma_{2t}=\gamma_t,\quad
\gamma_{2t+1}=-\frac{\gamma_t+\gamma_{t+1}}2.
\end{equation*}
These values can be arranged nicely in a table (see Figure~\ref{fig_some_values}), where the first row contains the values $\gamma_1$ and $\gamma_2$, the second row the values $\gamma_2$ through $\gamma_4$, the third $\gamma_4$ through $\gamma_8$, and so on (compare Stern~\cite{Stern1858}, Lehmer~\cite{Lehmer1929}, Northshield~\cite{Northshield2010})
For greater clarity, we scale the values by the factor $-3$.
\setlength{\extrarowheight}{3pt}
\begin{figure}
\begin{equation*}
\begin{array}{lllllllllllllllll}
1&&&&&&&&&&&&&&&&1\\
1&&&&&&&&-1&&&&&&&&1\\
1&&&&0&&&&-1&&&&0&&&&1\\
1&&-\tfrac12&&0&&\tfrac12&&-1&&\tfrac12&&0&&-\tfrac12&&1\\
1&-\tfrac14&-\tfrac12&\boxed{\tfrac14}&0&-\tfrac14&\tfrac12&\tfrac14&-1&\boxed{\tfrac14}&\tfrac12&-\tfrac14&0&\tfrac14&-\tfrac12&-\tfrac14&1
\end{array}
\end{equation*}
\caption{Some values $-3\gamma_t$, where $-3\gamma_{19}$ and $-3\gamma_{25}$ are boxed.}\label{fig_some_values}
\end{figure}
\setlength{\extrarowheight}{0pt}
For example, as the system $a+b=1$; $a-b=-1/12$ of linear equations has the unique solution $(a,b)=(11/24,13/24)$, we see that the Thue--Morse sequence changes sign on a set of asymptotic density $13/24$ upon adding $19=(10011)_2$, and also when adding the integer with reversed binary expansion, $25=(11001)_2$~\cite{MorgenbesserSpiegelhofer2012}.
%}}}

%{{{ sec_proof
\section{Proof of Theorem \ref{thm_components}} \label{sec_proof}

Let $\bft = (k_0, \ell_0, \ldots, k_{N-1}, \ell_{N-1}) \in \NI^{2N}$.
We analyze the matrix product in \eqref{eqn_Mt_def}, written in the form
 \begin{equation}   \label{eq:long_matrix_product}
 \prod_{j=0}^{N-1} (A_1^{k_j} A_0^{\ell_j}) = \frac{1}{ ((1-\alpha)(1-\beta))^N} \prod_{j=0}^{N-1} (T_{\tO\tO} + \alpha^{k_j} T_{\tL\tO} + \beta^{\ell_j} T_{\tO\tL} +\alpha^{k_j} \beta^{\ell_j} T_{\tL\tL}),
 \end{equation}
 where $T_{\tO\tO}, T_{\tL\tO}, T_{\tO\tL},  T_{\tL\tL}$ are $2 \times 2$ matrices given by
\begin{alignat*}{2}  
 &T_{\tO\tO} = \begin{pmatrix}
    \alpha\beta & 0 \\ \alpha(1-\alpha) & 0
\end{pmatrix}, 
 &&T_{\tL\tO} = \begin{pmatrix}
   1-\alpha-\beta & 0 \\ 0 & 0
\end{pmatrix},  \\  
 &T_{\tO\tL} = \begin{pmatrix}
   -\alpha\beta & \beta(1-\beta) \\ -\alpha(1-\alpha) & (1-\alpha)(1-\beta)
\end{pmatrix},  \qquad   
 &&T_{\tL\tL} = \begin{pmatrix}
    \alpha\beta & -\beta(1-\beta) \\ 0 & 0
\end{pmatrix}.\end{alignat*}

 Expanding the right-hand side of  \eqref{eq:long_matrix_product}, we see that each possible product 
  $T_{u_0} \cdots T_{u_{N-1}}$
 appears (where $u_j \in \{\tO,\tL\}^2$), and the accompanying product of monomials $\alpha^{k_j}, \beta^{\ell_j}$ is encoded by the binary word 
 $u_0 \cdots u_{N-1}$. 
 More precisely, writing $\bftau = u_0 \cdots u_{N-1} \in \{\tO,\tL\}^{2N}$, we get
\begin{equation} \label{eq:matrix_product_expanded} \prod_{j=1}^N (A_1^{k_j} A_0^{\ell_j}) = \frac{1}{ ((1-\alpha)(1-\beta))^N} \sum_{\boldsymbol{\tau} \in\{\tO, \tL\}^{2N}} \alpha^{\bftau \cdot \teven} \beta^{\bftau \cdot \todd}T_{u_0} \cdots T_{u_{N-1}},
\end{equation}
where $\teven, \todd$ are defined as in Section \ref{sec_main}.

We now investigate the products $T_{u_0} \cdots T_{u_{N-1}}$ in more detail. First, we can observe that products of two matrices among $T_{\tO\tO},T_{\tO\tL},T_{\tL\tO}, T_{\tL\tL}$ give another such matrix, multiplied by $\pm\alpha\beta$ or $\pm (\alpha+\beta-1)$. All the possible products are described in Table \ref{tab:multiplication}.
\begin{table}[h!]
\centering
\begin{tabular}{c|c c c c}  
 & $T_{\tO\tO}$ & $T_{\tL\tO}$ & $T_{\tO\tL}$ & $T_{\tL\tL}$ \\ 
 \hline
 $T_{\tO\tO}$ & $\alpha\beta T_{\tO\tO}$ & $-(\alpha+\beta-1)T_{\tO\tO}$ & $\alpha\beta T_{\tO\tL}$ & $-\alpha\beta T_{\tO\tL}$ \\ 
 $T_{\tL\tO}$ & $\alpha\beta T_{\tL\tO}$ & $-(\alpha+\beta-1)T_{\tL\tO}$ & $(\alpha+\beta-1) T_{\tL\tL}$ & $-(\alpha+\beta-1) T_{\tL\tL}$ \\
 $T_{\tO\tL}$ & $-(\alpha+\beta-1)T_{\tO\tO}$ & $(\alpha+\beta-1)T_{\tO\tO}$ & $-(\alpha+\beta-1)T_{\tO\tL}$ & $\alpha \beta T_{\tO\tL}$ \\
 $T_{\tL\tL}$ & $-\alpha\beta T_{\tL\tO}$ & $\alpha\beta T_{\tL\tO}$ & $-(\alpha+\beta-1)T_{\tL\tL}$ & $\alpha \beta T_{\tL\tL}$
\end{tabular}
\caption{Multiplication table for the matrices $T_v$}
\label{tab:multiplication}
\end{table}

It is thus clear that for any $\bftau = u_0 \cdots u_{N-1}$, we can write
\begin{equation} \label{eq:matrix_product}
    T_{u_0} \cdots T_{u_{N-1}} = (-1)^{\sigma(\bftau)} (\alpha+\beta-1)^{\lambda(\bftau)} (\alpha \beta)^{N-1-\lambda(\bftau)} T_{\nu(\bftau)},
\end{equation} 
where $\sigma(\bftau) \in \{0,1\}, \lambda(\bftau) \in \{0,1,\ldots,N-1\}$, and $\nu(\bftau) \in \{\tO\tO,\tO\tL,\tL\tO,\tL\tL\}$ are uniquely determined by $\bftau$. We claim that the functions $\nu, \sigma, \lambda$ can be explicitly described in the following way:
\begin{enumerate}
    \item[(a)] $\nu(\bftau) = xy$, where $x$ and $y$ are, respectively, the first and last digit in $\bftau$; 
    \item[(b)] $\sigma(\bftau) \equiv |\bftau|_\tL - |\nu(\bftau)|_\tL \pmod{2}$;
    \item[(c)] $\lambda(\bftau) = |\bftau|_{\tL\tO} - |\nu(\bftau)|_{\tL\tO}$.
\end{enumerate}

These formulas can be proved by simple induction on $N$. Indeed, for $N=1$ we have $\nu(\bftau)= \bftau$ and there is nothing to prove, while for $N=2$ they follow by inspection of Table \ref{tab:multiplication}.

Now, assume that (a)--(c) hold for some $N \geq 1$ and put $\mathbf{v} =  \bftau u_N =  u_0 \cdots u_{N-1} u_N$, where $u_N \in \{\tO\tO,\tO\tL,\tL\tO,\tL\tL\}$.
Writing $T_{u_0} \cdots T_{u_{N-1}} T_{u_N} = (T_{u_0} \cdots T_{u_{N-1}})\cdot T_{u_N}$, we get
$$ \nu(\mathbf{v})  = \nu(\nu(\bftau)u_N) = xy, $$
where $x$ is the first digit of $\nu(\bftau)$ and $y$ is  the last digit of $u_N$ (using (a) for $N=2$). By the inductive assumption $x$ is also the first digit of $\bftau$ (and $\mathbf{v}$), and thus we get (a) for $N+1$.

Part (b) follows from
\begin{align*} \sigma(\mathbf{v}) &\equiv \sigma(\bftau) + \sigma(\nu(\bftau)u_N) \\
&\equiv |\bftau|_\tL - |\nu(\bftau)|_\tL +  |\nu(\bftau)u_N|_\tL - |\nu(\nu(\bftau)u_N)|_\tL \\
&\equiv |\mathbf{v}|_\tL -|\nu(\mathbf{v})|_\tL \pmod{2},
\end{align*}
where we have again used (b) for $N=2$ and the inductive assumption.

Finally, in part (c) we get in a similar fashion
$$
\lambda(\mathbf{v}) = \lambda(\bftau) + \lambda(\nu(\bftau)u_N)
= |\bftau|_{\tL\tO} - |\nu(\bftau)|_{\tL\tO} +  |\nu(\bftau)u_N|_{\tL\tO} - |\nu(\mathbf{v})|_{\tL\tO}.
$$
Now, if $y$ denotes the last digit of $\bftau$ (and $\nu(\bftau)$), then 
$$ |\nu(\bftau)u_N|_{\tL\tO} - |\nu(\bftau)|_{\tL\tO} = |y u_N|_{\tL\tO} = |\bftau_{N+\tL}|_{\tL\tO} - |\bftau|_{\tL\tO},$$
and thus (c) also follows.

Recall that $\gamma_{\bft}$ is the top left entry of \eqref{eq:matrix_product_expanded}, and from equality \eqref{eq:matrix_product} we can deduce that the contribution of $T_{u_0} \cdots T_{u_{N-1}}$ is precisely
$$
   (-1)^{|\bftau|_\tL}  (\alpha+\beta-1)^{|\bftau|_{\tL\tO}} (\alpha \beta)^{N-|\bftau|_{\tL\tO}},  
$$
where $\bftau = u_0 \cdots u_{N-1}$. Summing over $\bft$, and grouping according to $|\bftau|_{\tL\tO}$, we reach the desired formula for $\gamma_{\bft}$.
%}}}

\section{Explicit computation of $c_\bft$} \label{sec:simplest_cases}

Using Theorem \ref{thm_components} or Proposition \ref{prp_components}, one can compute $c_\bft$ by inserting $\alpha=\e(\vartheta)/2, \beta=\e(-\vartheta)/2$ and expanding $\gamma_\bft$ into a Fourier series. The formulation in Proposition \ref{prp_components} is arguably better suited for this, since it only involves the components $\gammaINF_m$, as opposed to the $\compPrefactor_{N,n}$ depending on two parameters. Hence, we will only consider this formulation.

Let $b_j$ denote the coefficients in the mentioned Fourier expansion:
$$\gammaINF_m\left(\frac{\e(\vartheta)}{2},\frac{\e(-\vartheta)}{2}\right) =\frac{1}{(5-4\cos \vartheta)^m}=\sum_{j \in \Z} b_{m,j} \e(j\vartheta).$$
For $a \in \Z$ it is useful to have an expression for the sum of these coefficients over $j \geq a$:
$$\rsum_m(a) \eqdef \sum_{j \geq a} b_{m,j}.$$
Equivalently, we could define $\rsum_m(a)$ without substituting for $\alpha,\beta$, as follows:
\begin{equation*}
\rsum_m(a)=\sum_{i-j\ge a}
\frac1{2^{i+j}}
\bigl[\alpha^i\beta^j\bigr]
\gammaINF_m(\alpha,\beta)
=\sum_{i-j\ge a}
\frac1{2^{i+j}}
\bigl[\alpha^i\beta^j\bigr]
\biggl(\frac{\beta\alpha}{(1-\alpha)(1-\beta)}
\biggr)^m.
\end{equation*}
Then, by Proposition \ref{prp_components} we have the following formula, where for $\bft=(k_0,\ell_0,\ldots,k_{N-1},\ell_{N-1})\in\NI^{2N}$ we again use the notation $\teven = (k_0,0,\ldots,k_{N-1},0)$ and $\todd=(k_0,0,\ldots,k_{N-1},0)$.

\begin{corollary}\label{cor_c_explicit}
Let $N\ge1$ be an integer, $\bft\in \NI^{2N}$, and $a\in\mathbb Z$.
We have\begin{equation}\label{eqn_Delta_kl_R}
\begin{aligned}
c_\bft&=
\sum_{\substack{\bftau\in \{\tO,\tL\}^{2N}\\
\bftau\cdot\bft<\infty}}
(-1)^{\lvert\bftau\rvert_{\tL}} 2^{-\bftau\cdot\bft}
\mathcal D_{\lvert\bftau\rvert_{\tL\tO}}\bigl(\bftau\cdot(\todd-\teven)\bigr),
\end{aligned}
\end{equation}
where
\begin{equation*}
\mathcal D_{\ell}(r)\eqdef
\sum_{j=0}^{\ell} \binom{\ell}{j} (-1)^j
\rsum_{N-j}(r).
\end{equation*}
\end{corollary}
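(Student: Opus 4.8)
The plan is to derive Corollary~\ref{cor_c_explicit} directly from Proposition~\ref{prp_components} by substituting $\alpha=\e(\vartheta)/2$, $\beta=\e(-\vartheta)/2$ and summing the resulting Fourier series against the indicator of $\{m\ge 0\}$. Recall that $c_{\bft}=\Delta_{\bft}(0)=\sum_{m\ge0}\mu_{\bft}(m)$, and by the definition~\eqref{eqn_deltac_def} of $\mu_{\bft}(m)$ this amounts to the operation $\sum_{i-j\ge0}2^{-(i+j)}[\alpha^i\beta^j](\cdot)$ applied to $\gamma_{\bft}(\alpha,\beta)$. The whole point is that this operation is \emph{linear} in the power series it is applied to, so I can apply it termwise to the right-hand side of~\eqref{eqn_gamma_components_alt}.

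First I would fix an arbitrary $\bftau\in\{\tO,\tL\}^{2N}$ and examine its summand in~\eqref{eqn_gamma_components_alt}, namely
\[
(-1)^{\lvert\bftau\rvert_{\tL}}\,\alpha^{\bftau\cdot\teven}\beta^{\bftau\cdot\todd}
\sum_{j=0}^{\lvert\bftau\rvert_{\tL\tO}}\binom{\lvert\bftau\rvert_{\tL\tO}}{j}(-1)^j\,\gammaINF_{N-j}(\alpha,\beta).
\]
The prefactor $\alpha^{\bftau\cdot\teven}\beta^{\bftau\cdot\todd}$ is a pure monomial, so I must understand how the density-type operator $\sum_{i-j\ge0}2^{-(i+j)}[\alpha^i\beta^j]$ interacts with multiplication by $\alpha^{p}\beta^{q}$, where $p=\bftau\cdot\teven$ and $q=\bftau\cdot\todd$. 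The key elementary observation is a \emph{shift identity}: for any power series $F$,
\[
\sum_{i-j\ge a}2^{-(i+j)}[\alpha^i\beta^j]\bigl(\alpha^{p}\beta^{q}F(\alpha,\beta)\bigr)
=2^{-(p+q)}\sum_{i-j\ge a-(p-q)}2^{-(i+j)}[\alpha^i\beta^j]F(\alpha,\beta),
\]
obtained by the reindexing $i\mapsto i-p$, $j\mapsto j-q$, which sends the constraint $i-j\ge a$ to $i-j\ge a-(p-q)$ and pulls out the factor $2^{-(p+q)}$. Applying this with $a=0$, $F=\gammaINF_{N-j}$, and noting $p-q=\bftau\cdot(\teven-\todd)$, converts the inner density into $2^{-(p+q)}\rsum_{N-j}\bigl(-(p-q)\bigr)=2^{-\bftau\cdot\bft}\,\rsum_{N-j}\bigl(\bftau\cdot(\todd-\teven)\bigr)$, exactly matching the target since $p+q=\bftau\cdot\bft$. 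Summing the binomial combination over $j$ then produces $\mathcal D_{\lvert\bftau\rvert_{\tL\tO}}\bigl(\bftau\cdot(\todd-\teven)\bigr)$, and restoring the sign $(-1)^{\lvert\bftau\rvert_{\tL}}$ and the factor $2^{-\bftau\cdot\bft}$ gives the summand in~\eqref{eqn_Delta_kl_R}.

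Two points require care, and the second is the main obstacle. First, I must justify restricting the outer sum to those $\bftau$ with $\bftau\cdot\bft<\infty$: if $\bftau$ ``hits'' an infinite block (so $\bftau\cdot\bft=\infty$), then under the convention $\alpha^\infty=\beta^\infty=0$ the monomial $\alpha^{\bftau\cdot\teven}\beta^{\bftau\cdot\todd}$ vanishes, so such $\bftau$ drop out — this follows immediately from the conventions fixed before Theorem~\ref{thm_components}. Second, and more delicately, the termwise application of the operator $\sum_{i-j\ge0}2^{-(i+j)}[\alpha^i\beta^j]$ to the finite sum~\eqref{eqn_gamma_components_alt} is legitimate precisely because the sum over $\bftau$ is finite, and each $\rsum_{N-j}(r)$ converges absolutely; the absolute convergence is guaranteed by Lemma~\ref{lem_LR_splitting}, which bounds the total mass of the coefficients of $\gammaINF_{N-j}$ (a specialization of $M(\bft)$) and hence ensures $\rsum_{N-j}(r)$ is a well-defined finite quantity for every $r\in\mathbb Z$. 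Thus the linearity manipulation involves no interchange of infinite sums beyond what is already validated in Section~\ref{sec_evaluating} and Proposition~\ref{prop_limit_commute}, and the identity~\eqref{eqn_Delta_kl_R} follows. The statement for general $a$ is identical, replacing $0$ by $a$ in the operator and $\bftau\cdot(\todd-\teven)$ by $a+\bftau\cdot(\todd-\teven)$ throughout.
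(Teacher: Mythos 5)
Your proposal is correct and follows exactly the route the paper intends: the paper states Corollary~\ref{cor_c_explicit} as an immediate consequence of Proposition~\ref{prp_components} (applying the density operator $\sum_{i-j\ge a}2^{-(i+j)}[\alpha^i\beta^j]$ termwise, with the monomial shift turning each $\gammaINF_{N-j}$ into $2^{-\bftau\cdot\bft}\rsum_{N-j}(\bftau\cdot(\todd-\teven))$), and your write-up supplies precisely the linearity, reindexing, and convergence details that the paper leaves implicit. Nothing is missing.
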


\begin{remark}\label{rem_canonical_reduction}
Corollary~\ref{cor_c_explicit} contains an explicit reduction of $c_{\bft}(a)$ to the canonical cases $\rsum_n(a)$.
The contribution of the ``base case'' $\bftau=(0,\ldots,0)$ 
to $c_{\bft}(a)$ is just $\rsum_N(0)>1/2$ (see Proposition~\ref{prp_dominant_component} below).
Meanwhile, the expression $\mathcal D_\ell(r)$ is a higher difference of values $\rsum_j(r)$.
For example, $\ell=\lvert \bftau\rvert_{\tL\tO}=2$ yields
\[\mathcal D_2(r)=\rsum_{N-0}(r)-2\rsum_{N-1}(r)+\rsum_{N-2}(r)=
\bigl(\rsum_{N-0}(r)-\rsum_{N-1}(r)\bigr)-\bigl(\rsum_{N-1}(r)-\rsum_{N-2}(r)\bigr).\]
These higher differences seem to be quite small in absolute value, and appear with a factor $\leq 2^{-\lvert\bftau\rvert_{\tL}}$.
A precise understanding of these objects might provide a new path towards Cusick's conjecture.
\end{remark}
For given $m$, the coefficients $b_{m,j}$ and their sums $\rsum_m(a)$ can be computed in a routine way. As an example, the five initial cases are given as follows.
\begin{align*}
   \rsum_0(a) &= \begin{cases}
0 &\text{if } a > 0, \\
1 &\text{if } a \leq 0,
       \end{cases}
   \\
   \rsum_1(a) &= \begin{cases}
 \frac{2^{-a}}{3} \cdot 2 &\text{if } a > 0, \\
1  - \frac{2^a}{3} &\text{if } a \leq 0,
       \end{cases}\\
   \rsum_2(a) &= \begin{cases}
\frac{2^{-a}}{27}(6a + 16) &\text{if } a > 0, \\
1 - \frac{2^a}{27}(-3a+11) &\text{if } a \leq 0,
       \end{cases}\\
   \rsum_3(a) &= \begin{cases}
\frac{2^{-a}}{81}  \left(3 a^2+21 a+46\right) &\text{if } a > 0, \\
1-\frac{2^a}{162}  \left(3 a^2-27 a+70\right) &\text{if } a \leq 0,
       \end{cases}\\ 
   \rsum_4(a) &= \begin{cases}
\frac{2^{-a}}{2187} \left(9 a^3+117 a^2+600 a+1216\right) &\text{if } a > 0, \\
1-\frac{2^a}{4374} \left(-9 a^3+144 a^2-861 a+1942\right) &\text{if } a \leq 0.
       \end{cases}
\end{align*}
%{{{ sec_base_case
\subsection{Evaluating the base cases}\label{sec_base_case}
We have the following lower bound for the values $\rsum_m(0)$.
\begin{proposition}\label{prp_dominant_component}
Let $m\ge1$ be an integer, and 
$\bft=(\infty,\ldots,\infty)\in\NI^{2m}$.
We have
\begin{equation*}
\rsum_m(0)=\DeltaC{\bft}{0}
>\frac12.
\end{equation*}
\end{proposition}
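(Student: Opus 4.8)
The plan is to reduce the claimed inequality to the positivity of the single, central Fourier coefficient $b_{m,0}$. The identity $\rsum_m(0) = \DeltaC{\bft}{0}$ for $\bft = (\infty,\ldots,\infty)$ is immediate from the ``base case'' observation that $\gamma_\bft = \gammaINF_m$ in this situation, so it suffices to treat $\rsum_m(0) = \sum_{j \geq 0} b_{m,j}$. First I would record the two elementary analytic facts about the function
\[
\gammaINF_m\!\left(\tfrac{\e(\vartheta)}2, \tfrac{\e(-\vartheta)}2\right) = \frac{1}{(5 - 4\cos\vartheta)^m} = \sum_{j \in \Z} b_{m,j}\,\e(j\vartheta):
\]
it is real-valued, strictly positive, and even in $\vartheta$. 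Evenness forces the symmetry $b_{m,j} = b_{m,-j}$, and evaluating the Fourier series at $\vartheta = 0$ gives $\sum_{j \in \Z} b_{m,j} = (5-4)^{-m} = 1$.

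Second, I would exploit this symmetry to compute $\rsum_m(0)$ exactly in terms of $b_{m,0}$. Splitting the normalisation identity into its central term and its two symmetric tails yields $1 = b_{m,0} + 2\sum_{j \geq 1} b_{m,j}$, hence $\sum_{j \geq 1} b_{m,j} = (1 - b_{m,0})/2$. Therefore
\[
\rsum_m(0) = b_{m,0} + \sum_{j \geq 1} b_{m,j} = \frac{1 + b_{m,0}}{2},
\]
so the proposition is \emph{equivalent} to the single statement $b_{m,0} > 0$.

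Finally, the positivity of the constant coefficient is transparent: $b_{m,0}$ is the mean value
\[
b_{m,0} = \frac{1}{2\pi}\int_0^{2\pi} \frac{d\vartheta}{(5 - 4\cos\vartheta)^m},
\]
the average of a strictly positive continuous function, hence strictly positive. This completes the argument.

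I do not expect a genuine obstacle here; the proof is short, and the only points requiring care are bookkeeping ones. One must justify the rearrangements of $\sum_j b_{m,j}$ by absolute convergence, which holds since the integrand extends to a meromorphic function of $\e(\vartheta)$ with poles only at $\e(\vartheta)\in\{1/2,2\}$, giving $b_{m,j} = \LandauO\bigl(2^{-\lvert j\rvert}\lvert j\rvert^{m-1}\bigr)$; and one should confirm that the two definitions of $\rsum_m$ given earlier agree, which follows from $\alpha^i\beta^j = 2^{-(i+j)}\e((i-j)\vartheta)$ under the substitution. If a fully explicit statement were desired, $b_{m,0}$ can be evaluated in closed form, but the bare positivity above already suffices for the proposition.
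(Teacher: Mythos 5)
Your proof is correct, and it reaches the paper's key identity $\rsum_m(0)=\tfrac12(1+b_{m,0})$ by a genuinely more elementary route. The paper's proof first identifies $\gamma_{\bft}=\gammaINF_m$ exactly as you do, but then invokes the general Lemma~\ref{lem_cot}, namely $c_{\bft}=\tfrac12+\tfrac12\mu_{\bft}(0)+\tfrac1{4\pi}\int_0^{2\pi}\imagpart\gamma_{\bft}(\vartheta)\cot(\vartheta/2)\,\mathrm d\vartheta$, whose proof requires the cotangent-kernel identity~\eqref{eqn_integral_identity} and a truncation and limit-interchange argument; for $\bft=(\infty,\ldots,\infty)$ the function $\gamma_{\bft}(\vartheta)=(5-4\cos\vartheta)^{-m}$ is real-valued, so the integral term vanishes and $c_{\bft}=\tfrac12+\tfrac12\mu_{\bft}(0)$ with $\mu_{\bft}(0)=b_{m,0}$. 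You instead exploit the evenness of $(5-4\cos\vartheta)^{-m}$ directly: the coefficient symmetry $b_{m,j}=b_{m,-j}$ together with the normalisation $\sum_{j\in\mathbb Z}b_{m,j}=1$ (evaluation at $\vartheta=0$) gives the same formula in two lines. Your justification of absolute convergence via the pole locations of the meromorphic extension is fine, though it can be replaced by the even simpler remark that $b_{m,j}=\mu_{\bft}(j)\ge0$ with $\sum_j b_{m,j}\le1$ by Lemma~\ref{lem_LR_splitting}. The final step --- positivity of the mean value $b_{m,0}=\tfrac1{2\pi}\int_0^{2\pi}(5-4\cos\vartheta)^{-m}\,\mathrm d\vartheta$ --- is identical to the paper's~\eqref{eqn_Fourier_coefficient}. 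What the paper's route buys is a reusable tool (Lemma~\ref{lem_cot}) that applies to non-real-valued $\gamma_{\bft}$ as well; what yours buys is a short, self-contained proof of this particular proposition that bypasses that machinery entirely.
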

In the proof of this proposition, we will make use of the following Lemma (compare~\cite{Spiegelhofer2022}).
\begin{lemma}\label{lem_cot}
Let $\bft\in\NI^{2N}$.
We have
\begin{equation*}
c_{\bft} = \frac 12 + \frac{\mu_{\bft}(0)}2 +
\frac 1{4\pi}
\int_0^{2\pi}
\imagpart \gamma_{\bft}(\vartheta)\cot(\vartheta/2)\,\mathrm d\vartheta,
\end{equation*}
where the integrand is a bounded function.
\end{lemma}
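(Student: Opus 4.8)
The plan is to recover the one-sided sum $c_{\bft}=\sum_{m\ge0}\mu_{\bft}(m)$ from the full characteristic function by integrating $\imagpart\gamma_{\bft}$ against a kernel whose Fourier coefficients encode the sign function. Write $\gamma_{\bft}(\vartheta)=\sum_{m\in\Z}\mu_{\bft}(m)\e(m\vartheta)$ with real, nonnegative coefficients (Lemma~\ref{lem_LR_splitting}), and set
\[
S\eqdef\sum_{m\in\Z}\mu_{\bft}(m),\qquad
P\eqdef\sum_{m\ge1}\mu_{\bft}(m),\qquad
M\eqdef\sum_{m\le-1}\mu_{\bft}(m),
\]
so that $S=P+M+\mu_{\bft}(0)$ and $c_{\bft}=\mu_{\bft}(0)+P$. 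Since $\imagpart\gamma_{\bft}(\vartheta)=\sum_m\mu_{\bft}(m)\sin(m\vartheta)$, everything reduces to the single-frequency integral $\int_0^{2\pi}\sin(m\vartheta)\cot(\vartheta/2)\,\mathrm d\vartheta$.

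The computational heart is the identity
\[
I_m\eqdef\int_0^{2\pi}\sin(m\vartheta)\cot(\vartheta/2)\,\mathrm d\vartheta=2\pi\,\mathrm{sgn}(m),
\]
with the convention $\mathrm{sgn}(0)=0$. I would prove it by substituting $z=\e(\vartheta)$ and $\cot(\vartheta/2)=i(z+1)/(z-1)$, and using the telescoping geometric sum $\tfrac{z^m-z^{-m}}{z-1}=\sum_{k=-m}^{m-1}z^k$ to obtain, for $m\ge1$,
\[
\sin(m\vartheta)\cot(\vartheta/2)=\frac{z^m+z^{-m}}2+\sum_{k=-(m-1)}^{m-1}z^k.
\]
Only the $k=0$ term has nonzero integral over a full period, so $I_m=2\pi$; the case $m\le-1$ follows by oddness, and $I_0=0$ is trivial.

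Next I would justify integrating the Fourier series term by term, which is the only genuinely delicate point because $\cot(\vartheta/2)$ has simple poles at $\vartheta\in\{0,2\pi\}$. Two facts make this routine. First, the integrand is bounded: $\gamma_{\bft}(\vartheta)$ is real-analytic in $\vartheta$ (the denominators $(1-\alpha)(1-\beta)$ stay away from $0$ for $|\alpha|=|\beta|=1/2$) and $\gamma_{\bft}(0)=S$ is real, whence $\imagpart\gamma_{\bft}(\vartheta)=\LandauO(\vartheta)$ near $0$, and similarly near $2\pi$, cancelling the pole. Second, the elementary bound $|\sin(m\vartheta)\cot(\vartheta/2)|\le C|m|$ for $\vartheta\in(0,2\pi)$ (from $|\sin(m\vartheta)|\le|m|\min(\vartheta,2\pi-\vartheta)$ and $|\cot(\vartheta/2)|\le\pi/\min(\vartheta,2\pi-\vartheta)$), together with the exponential decay of $\mu_{\bft}(m)$ giving $\sum_m|m|\,\mu_{\bft}(m)<\infty$, shows that the tails $\sum_{|m|>K}\mu_{\bft}(m)\sin(m\vartheta)\cot(\vartheta/2)$ tend to $0$ uniformly. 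Hence the symmetric partial sums may be integrated termwise.

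Assembling, $\tfrac1{4\pi}\int_0^{2\pi}\imagpart\gamma_{\bft}\cot(\vartheta/2)\,\mathrm d\vartheta=\tfrac1{4\pi}\sum_m\mu_{\bft}(m)\,I_m=\tfrac12(P-M)$, and eliminating $P,M$ via $c_{\bft}=\mu_{\bft}(0)+P$ and $S=P+M+\mu_{\bft}(0)$ yields
\[
c_{\bft}=\frac{S}2+\frac{\mu_{\bft}(0)}2+\frac1{4\pi}\int_0^{2\pi}\imagpart\gamma_{\bft}(\vartheta)\cot(\vartheta/2)\,\mathrm d\vartheta.
\]
The stated form is the case $S=1$, i.e. $\gamma_{\bft}(0)=\sum_m\mu_{\bft}(m)=1$, which holds precisely when the right column of $M(\bft)$ vanishes — in particular whenever $\ell_{N-1}=\infty$, covering the application to $\bft=(\infty,\dots,\infty)$ in Proposition~\ref{prp_dominant_component}. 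I expect the main obstacle to be exactly the termwise integration past the singular kernel; a secondary point worth making explicit is the normalization $S=1$, since for general $\bft$ with $\ell_{N-1}<\infty$ the total mass can be strictly below $1$ and the constant becomes $S/2$ rather than $1/2$.
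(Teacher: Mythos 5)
Your proof is correct, and it takes a genuinely more direct route than the paper's. Both arguments ultimately rest on the same kernel identity $\int_0^{2\pi}\sin(j\vartheta)\cot(\vartheta/2)\,\mathrm d\vartheta=2\pi$ for $j\ge1$, but you apply it by integrating the \emph{bilateral} Fourier series $\imagpart\gamma_{\bft}(\vartheta)=\sum_m\mu_{\bft}(m)\sin(m\vartheta)$ termwise against $\cot(\vartheta/2)$, extracting $\tfrac12(P-M)$ via the sign function and finishing with elementary algebra; the termwise integration is cleanly justified by the Weierstrass-type bound $\lvert\sin(m\vartheta)\cot(\vartheta/2)\rvert\le C\lvert m\rvert$ together with $\sum_m\lvert m\rvert\,\mu_{\bft}(m)<\infty$, which the paper also establishes (existence of all moments). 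The paper instead truncates to a one-sided Dirichlet kernel $\sum_{0\le j<m}\e(-j\vartheta)=\frac{1-\e(-m\vartheta)}{1-\e(-\vartheta)}$, splits $\frac1{1-\e(-\vartheta)}$ into real and imaginary parts to produce the constant, the $\mu_{\bft}(0)$ term and the cotangent integral, and then disposes of the $\e(-m\vartheta)$ remainder by an $\varepsilon$-argument; your version avoids that two-stage limit entirely and is, to our mind, easier to check. Your closing caveat is also a genuine and valuable observation rather than a pedantic one: the constant produced by either argument is $\gamma_{\bft}(0)/2=\bigl(\sum_m\mu_{\bft}(m)\bigr)/2$, and by Lemma~\ref{lem_LR_splitting} this total mass can be strictly less than $1$ when $\ell_{N-1}<\infty$ (already for $\bft=(k,\ell)$ with both entries finite, the entry $\mathfrak b$ is nonzero). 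So the lemma as stated should either be restricted to $\bft$ with $\ell_{N-1}=\infty$ --- which covers its only application, Proposition~\ref{prp_dominant_component} --- or have its leading term $\tfrac12$ replaced by $\gamma_{\bft}(0)/2$. Your proof makes this normalization explicit where the paper's implicitly assumes $\sum_{\ell\ge1}a_\ell\to1$.
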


We also repeat~\cite[Lemma~2.2]{Spiegelhofer2022}, and adapt the proof of Proposition~2.1 in that paper.
\begin{lemma}
For $j\geq 1$ we have
\begin{equation}\label{eqn_integral_identity}
\int_0^{2\pi}
\sin(j\vartheta)\cot(\vartheta/2)\,\mathrm d\vartheta
=2\pi,
\end{equation}
where the integrand is bounded.
\end{lemma}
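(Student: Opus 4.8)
The plan is to reduce the integrand to a finite trigonometric polynomial, every nonconstant term of which integrates to zero over a full period, leaving only the constant contribution. First I would dispose of the boundedness claim, since this underpins all the manipulations that follow. Near $\vartheta=0$ one has $\cot(\vartheta/2)\sim 2/\vartheta$ while $\sin(j\vartheta)\sim j\vartheta$, so the product tends to $2j$; near $\vartheta=2\pi$ the simple pole of $\cot(\vartheta/2)$ at $\vartheta/2=\pi$ is cancelled by the first-order zero of $\sin(j\vartheta)$ at $\vartheta=2\pi$. Hence the integrand extends to a continuous (indeed real-analytic) function on $[0,2\pi]$, and the integral is well defined.

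For the evaluation, the key step is to write $\cot(\vartheta/2)=\cos(\vartheta/2)/\sin(\vartheta/2)$ and apply the product-to-sum formula $\sin(j\vartheta)\cos(\vartheta/2)=\tfrac12\bigl(\sin((j+\tfrac12)\vartheta)+\sin((j-\tfrac12)\vartheta)\bigr)$, which exhibits the integrand as an average of two Dirichlet kernels. Writing $D_n(\vartheta)=1+2\sum_{k=1}^{n}\cos(k\vartheta)=\sin((n+\tfrac12)\vartheta)/\sin(\vartheta/2)$, this gives
\[
\sin(j\vartheta)\cot(\vartheta/2)=\frac12\bigl(D_j(\vartheta)+D_{j-1}(\vartheta)\bigr)=1+\cos(j\vartheta)+2\sum_{k=1}^{j-1}\cos(k\vartheta),
\]
a finite linear combination of $1$ and the cosines $\cos(k\vartheta)$ with $1\le k\le j$. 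I then integrate term by term over $[0,2\pi]$: the constant term contributes $2\pi$, while $\int_0^{2\pi}\cos(k\vartheta)\,\mathrm d\vartheta=0$ for every $k\ge1$, which yields the asserted value $2\pi$.

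An equivalent and equally short route is induction on $j$. The base case $j=1$ follows from $\sin\vartheta\cot(\vartheta/2)=2\cos^2(\vartheta/2)=1+\cos\vartheta$, whose integral is $2\pi$, and the inductive step uses the telescoped difference
\[
\bigl(\sin((j+1)\vartheta)-\sin(j\vartheta)\bigr)\cot(\vartheta/2)=2\cos\bigl((j+\tfrac12)\vartheta\bigr)\cos(\vartheta/2)=\cos((j+1)\vartheta)+\cos(j\vartheta),
\]
which integrates to $0$ for $j\ge1$, so the integral is independent of $j$. There is no genuine obstacle here; the only point requiring a modicum of care is the cancellation of the endpoint poles of $\cot(\vartheta/2)$ by the zeros of $\sin(j\vartheta)$, which guarantees that the Dirichlet-kernel identity holds as an equality of continuous functions on the whole interval and legitimizes the termwise integration.
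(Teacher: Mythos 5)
Your proof is correct. The paper itself does not prove this lemma --- it is quoted verbatim from \cite[Lemma~2.2]{Spiegelhofer2022} --- so there is no in-paper argument to compare against; your self-contained derivation fills that gap cleanly. Both of your routes work: the identity
\[
\sin(j\vartheta)\cot(\vartheta/2)=\tfrac12\bigl(D_j(\vartheta)+D_{j-1}(\vartheta)\bigr)=1+\cos(j\vartheta)+2\sum_{k=1}^{j-1}\cos(k\vartheta)
\]
is exactly the standard Dirichlet-kernel computation, and termwise integration over a full period immediately gives $2\pi$; the inductive variant via the telescoping difference $\bigl(\sin((j+1)\vartheta)-\sin(j\vartheta)\bigr)\cot(\vartheta/2)=\cos((j+1)\vartheta)+\cos(j\vartheta)$ is equally valid. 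Your treatment of the endpoint behaviour (the simple poles of $\cot(\vartheta/2)$ at $\vartheta=0$ and $\vartheta=2\pi$ being cancelled by the zeros of $\sin(j\vartheta)$, so that the integrand extends continuously with boundary values $2j$) also justifies the boundedness claim in the statement. Nothing is missing.
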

\begin{proof}[Proof of Lemma~\ref{lem_cot}]
Let $\varepsilon>0$, and choose $m$ so large that
\[\sum_{j\ge m}\mu_{\bft}(j)<\varepsilon.\]
It follows from~\eqref{eqn_Delta_def}) that
there exists an error term $E$ such that $\lvert E\rvert<\varepsilon$
and
\begin{equation}\label{eqn_ct_summation}
c_{\bft}
=\sum_{0\leq j<m}\mu_{\bft}(j)
+E
=\int_0^1
\gamma_{\bft}(\vartheta)
\sum_{0\leq j<m} \e(-j\vartheta)\,\mathrm \,\mathrm d\vartheta
+E
=
\int_0^1
\realpart
\gamma_{\bft}(\vartheta)
\frac{1-\e(-m\vartheta)}{1-\e(-\vartheta)}
\,\mathrm d\vartheta+E.
\end{equation}

Consider the product representation~\eqref{eqn_Mt_def},~\eqref{eqn_gamma_blocks_infinite} of $\gamma_{\bft}(\alpha,\beta)$,
as well as the definitions of $A_0^\ell$, $A_1^k$ for $\ell,k\in\NI$.
We see that
bivariate polynomials in $\alpha,\beta$ appear in the numerators of each of the four matrix entries, and denominators of the form $(1-\alpha)^K(1-\beta)^L$, \emph{where $K,L\leq N$}.
Expanding these formal power series, a relation
\[\bigl[\alpha^i\beta^j]\gamma_{\bft}(\alpha,\beta)\leq C(i+1)^N(j+1)^N
\]
follows (where the constant may depend on $\bft$).
In particular, \emph{all moments of $\mu_{\bft}$ exist},
as each of the expressions
\[
m_L(\bft)
\eqdef
\sum_{a\in\mathbb Z}
a^L
\mu_{\bft}(a)
=
\sum_{a\in\mathbb Z}
a^L
\sum_{i-j=L}
\frac1{2^{i+j}}
\bigl[\alpha^i\beta^j\bigr]
\gamma_{\bft}\bigl(\alpha,\beta\bigr)
\]
is finite.
Thus,
\[\imagpart\gamma_{\bft}(\vartheta)=\sum_{a\in\mathbb Z}\mu_{\bft}(a)\sin(a\vartheta)
=\LandauO(\vartheta)\]
for $\vartheta\rightarrow 0$, while
$\cot(-\pi\vartheta)=\LandauO(1/\vartheta)$.
Note also that
\begin{equation}\label{eqn_cot_introduction}
\realpart \frac 1{1-\e(-\vartheta)}=\frac 12\quad\mbox{and}\quad
\imagpart \frac 1{1-\e(-\vartheta)}=\frac 12\cot(-\vartheta/2).
\end{equation}

Equations~\eqref{eqn_ct_summation} and~\eqref{eqn_cot_introduction} imply
\begin{equation}\label{eqn_ct_expansion}
\begin{aligned}
c_{\bft}
&=
\int_0^1 
\realpart
\frac{\gamma_{\bft}(\vartheta)}{1-\e(-\vartheta)}
-
\realpart
\frac{\gamma_{\bft}(\vartheta)\e(-m\vartheta)}{1-\e(-\vartheta)}
\,\mathrm d\vartheta+E
\\&=
\int_0^1 
\realpart
\frac{\gamma_{\bft}(\vartheta)}{1-\e(-\vartheta)}
-\frac 12\realpart \bigl(\gamma_{\bft}(\vartheta)\e(-m\vartheta)\bigr)
\\&\hspace{10em}+
\frac 12\imagpart \bigl(\gamma_{\bft}(\vartheta)\e(-m\vartheta)\bigr)\cot(-\pi\vartheta)
\,\mathrm d\vartheta+E
.
\end{aligned}
\end{equation}
All occurring summands are bounded functions.
Since $\sum_{j<m}\mu_{\bft}(j)\le1$, it follows that
\[\bigl\lvert \gamma_{\bft}(\vartheta)\e(-m\vartheta)\bigr\rvert
\leq \biggl\lvert\sum_{\ell\geq 1}a_\ell \e(-\ell\vartheta)\biggr\rvert+\varepsilon
\]
for some nonnegative $a_\ell$ such that $\sum_{\ell\geq 1}a_\ell\leq 1$.
The contribution of the second term in the middle line of~\eqref{eqn_ct_expansion} is therefore bounded by $\varepsilon$.
We obtain
\[c_{\bft}=\int_0^1 \realpart\frac{\gamma_{\bft}(\vartheta)}{1-\e(-\vartheta)}
+\frac 12\sum_{\ell\geq 1}
a_\ell\sin(-2\pi\ell\vartheta)\cot(-\pi\vartheta)\,\mathrm d\vartheta.    \]
The partial sums $\sum_{1\leq \ell<L}a_\ell\sin(-2\pi\ell\vartheta)\cot(-\pi\vartheta)\,\mathrm d\vartheta$ are bounded, uniformly in $L$, by an integrable function on $[0,1]$.
Interchanging the summation and the integral, applying the identities (\ref{eqn_integral_identity}) and~\eqref{eqn_cot_introduction},
and letting $\varepsilon\to0$, the statement follows.
\end{proof}

\begin{proof}[Proof of Proposition~\ref{prp_dominant_component}]
In Corollary~\ref{cor_c_explicit}, the only summand that remains satisfies
$\bftau=(\tO,\ldots,\tO)\in\NI^{2m}$, which leads to the identity
%$\rsum_m(0)=
$\DeltaC{\bft}{0}=c_{\bft}(0)$.
By Theorem~\ref{thm_components}, the function $\gamma_{\bft}$ satisfies
\[\gamma_{\bft}(\vartheta)=\compPrefactor_{N,0}(\vartheta)=\frac1{(5-4\cos \vartheta)^N},\]
and is therefore real-valued and symmetric.
Lemma~\ref{lem_cot} yields
\[c_{\bft}=\frac12+\frac{\mu_{\bft}(0)}{2},\]
where $\mu_{\bft}(0)$ is the Fourier coefficient
\begin{equation}\label{eqn_Fourier_coefficient}
\mu_{\bft}(0)=
\hat \gamma_{\bft}(0)=\frac1{2\pi}\int_0^{2\pi}
\frac{\mathrm d\vartheta}{(5-4\cos\vartheta)^N}>0.
\end{equation}
\end{proof}

The first identities produced by this proposition are
{\everymath={\displaystyle}
\begin{equation}\label{eqn_rsum_first_values}
\begin{array}{lllr}
\rsum_1(0)&=\lim_{k\rightarrow\infty}c_{2^k-1}&=&2/3,\\[2mm]
\rsum_2(0)&=\lim_{k\rightarrow\infty}c_{(2^k-1)(2^{2k}+1)}&=&16/27,\\[2mm]
\rsum_3(0)&=\lim_{k\rightarrow\infty}c_{(2^k-1)(2^{4k}+2^{2k}+1)}&=&138/243,\\[2mm]
\rsum_4(0)&=\lim_{k\rightarrow\infty}c_{(2^k-1)(2^{6k}+2^{4k}+2^{2k}+1)}&=&1216/2187,\\[2mm]
\rsum_5(0)&=\lim_{k\rightarrow\infty}c_{(2^k-1)(2^{8k}+2^{6k}+2^{4k}+2^{2k}+1)}&=&10802/19683.
\end{array}
\end{equation}
}
This is consistent with the formulas given after Remark~\ref{rem_canonical_reduction}.
The sequence defined by Proposition~\ref{prp_dominant_component} apparently can be found in the OEIS~\cite[\texttt{A369580}]{OEIS}:
the identity
\begin{equation}\label{eqn_OEIS_A369580}
\rsum_m(0)
=A_{m,m}
\end{equation}
seems to be satisfied,
where the right hand side is the diagonal of the array $A$ defined by
{\everymath={\displaystyle}
\begin{equation*}
\begin{array}{rll}
A_{k,0}&=1&\mbox{for }k\ge0,\\
A_{0,\ell}&=0&\mbox{for }\ell\ge1,\\
A_{k,\ell}&=\frac{A_{k-1,\ell}+A_{k,\ell-1}+A_{k-1,\ell-1}}3&\mbox{for }k,\ell\ge1,
\end{array}
\end{equation*}
}
see Figure~\ref{fig_diagonal}.
  \begin{figure}[!ht]
{\everymath={\displaystyle}
\[
\begin{array}{rrrrrrrr}
1&\tfrac01&\tfrac03&\tfrac09&\tfrac0{27}&\tfrac0{81}\\[2mm]
\tfrac11&\tfrac23&\tfrac29&\tfrac{2}{27}&\tfrac{2}{81}&\tfrac{2}{243}\\[2mm]
\tfrac33&\tfrac89&\tfrac{16}{27}&\tfrac{24}{81}&\tfrac{32}{243}&\tfrac{40}{729}\\[2mm] 
\tfrac99&\tfrac{26}{27}&\tfrac{66}{81}&\tfrac{138}{243}&\tfrac{242}{729}&\tfrac{378}{2187}\\[2mm]
\tfrac{27}{27}&\tfrac{80}{81}&\tfrac{224}{243}&\tfrac{560}{729}&\tfrac{1216}{2187}&\tfrac{2320}{6561}\\[2mm]
\tfrac{81}{81}&\tfrac{242}{243}&\tfrac{706}{729}&\tfrac{1938}{2187}&\tfrac{4834}{6561}&\tfrac{10802}{19683}
\end{array}
\]
}
\caption{An array containing $\rsum_0(0),\ldots,\rsum_5(0)$ as its diagonal}
\label{fig_diagonal}
\end{figure}
We expect that proving $\rsum_k(0)=A_{k,k}$ for all $k$ is a straightforward application of the Weierstrass substitution $u=\tan(\vartheta/2)$ on the integral~\eqref{eqn_Fourier_coefficient}.

%}}}

To demonstrate Corollary \ref{cor_c_explicit}, we now perform explicit calculations of $c_t$, for all non-negative integers $t$ having $N=1,2$ maximal blocks of $\tL$s in its binary expansion. In particular, we are going to show that Cusick's conjecture holds for these cases.

\subsection{The case $N=1$}
Let $t$ have binary expansion $\tL^k$, i.e., $t=2^k-1$. Then its extended binary expansion is
 $\mathbf{t}=(k,\infty) \in \NI^2$ and we have $\teven=(k,0),\todd=(0,\infty)$. 
The formula \eqref{eqn_Delta_kl_R} comprises two nonzero summands, corresponding to $\bftau \in \{\tO\tO,\tL\tO  \}$, and becomes 
\begin{align*} c_{\bft} &= \mathcal{D}_0(0) - 2^{-k} \mathcal{D}_1(-k) \\
&= \rsum_1(0) - 2^{-k}\left(\rsum_1(-k)-\rsum_0(-k) \right) \\
&= \frac{2}{3} - 2^{-k}\left(1-\frac{2^{-k}}{3}-1\right) \\
&= \frac{2}{3} + \frac{1}{3 \cdot 4^k},
\end{align*}
which is bounded from below by $2/3$. Hence, Cusick's conjecture holds in this case, and furthermore the values $c_t$ decrease as the length of the block $\tL^k$ increases.
 
\subsection{The case {$N=2$}}
Consider $t$ with usual binary expansion $\tL^k \tO^\ell \tL^m$, where $k,\ell,m \geq 1$. We are now going to prove that $c_t > 5/9$, which will follow from the chain of inequalities
\begin{equation} \label{eq:c_inequalities}
    c_{(m,\ell,k,\infty)} > c_{(m,\ell,\infty,\infty)} \geq c_{(\infty,\ell,\infty,\infty)} \geq c_{(\infty,1,\infty,\infty)}= \frac{5}{9}.
\end{equation}
We note that it should be possible to directly obtain the inequality between outermost terms. However, this step-by-step method gives more insight and illustrates a more general phenomenon, discussed in Section \ref{sec_conjectures}.
Let 
$$\bft_1 = (m,\ell,k,\infty), \quad \bft_2 = (m,\ell,\infty,\infty),  \quad  \bft_3 = (\infty,\ell,\infty,\infty),  \quad  \bft_4= (\infty,1,\infty,\infty)$$
denote extended binary expansions appearing in \eqref{eq:c_inequalities}. 
Again, the only $\bftau$ contributing to $c_{\bft_1}$ are those with a $\tO$ on the last position. Hence, from \eqref{eqn_Delta_kl_R} we get the following expression, where subsequent terms correspond to such $\bftau$ ordered lexicographically:
\begin{align*} c_{\bft_1} = &\mathcal{D}_0(0)-2^{-m}\mathcal{D}_1(-m)-2^{-\ell}\mathcal{D}_1(\ell) +2^{-(\ell+m)}\mathcal{D}_1(\ell-m)-2^{-k}\mathcal{D}_1(-k) \\&+2^{-(k+m)} 
\mathcal{D}_2(-(k+m))+2^{-k-\ell}\mathcal{D}_1(-k + \ell)-2^{-(k+\ell+m)}\mathcal{D}_1(-k+\ell-m). 
\end{align*}
Now, the formula for $c_{\bft_2}$ is obtained by passing $k \to \infty$, which sets all summands containing $k$ to $0$. Hence, we have
$$ c_{\bft_1} - c_{\bft_2} = 2^{-k}\mathcal{D}_1(-k) +2^{-(k+m)} 
\mathcal{D}_2(-(k+m))+2^{-k-\ell}\mathcal{D}_1(-k + \ell)-2^{-(k+\ell+m)}\mathcal{D}_1(-k+\ell-m). $$
After substituting the expressions for $\rsum_m(a)$ and performing some algebraic manipulation, we finally arrive at the formula
\begin{equation*}
c_{\bft_1} - c_{\bft_2} =\frac{1}{27} \begin{cases}
  4^{-k-m} \left( 3k(4^m-1) +2 \cdot 4^m-3 m+7\right)+4^{-l} \cdot 6m  &\text{if }  k < \ell, k+m < \ell, \\
  4^{-k-m} \left(4^m(3k+2)-3 l+9\right) + 2 \cdot 4^{-l} (3 l-3 k-1)&\text{if }  k < \ell, k+m \geq \ell, \\
  3 \cdot 4^{-k-m} \left( \ell\left(4^m-1\right)+3\right) &\text{if }  k \geq \ell.     
\end{cases} 
\end{equation*}
In each case the obtained expression is positive.

The inequality $c_{\bft_2} \geq c_{\bft_3}$ follows by passing to the limit with $m$ in the inequality we have just proved, and using the digit reversal property (see \cite{MorgenbesserSpiegelhofer2012} or Corollary \ref{cor_reversal}).

Furthermore, we have
\begin{align*} c_{\bft_3} - c_{\bft_4} &= -2^{-\ell} \mathcal{D}_1(\ell) + 2^{-1} \mathcal{D}_1(1) \\
&=-2^{-\ell}(\rsum_2(\ell)-\rsum_1(\ell))+\frac{1}{2}(\rsum_2(1)-\rsum_1(1)) \\
&= \frac{1}{27 \cdot 4^\ell}  \left(4^\ell-6 \ell+2\right) \geq  0.
\end{align*}

Finally, we get
$$c_{\bft_4} = \mathcal{D}_0(0) - \frac{1}{2} \mathcal{D}_1(1) = \rsum_2(0) - \frac{1}{2} (\rsum_2(1)-\rsum_1(1)) =\frac{5}{9}.$$

\subsection{General $N$}
As we have seen, the calculations may quickly become tedious even for very small $N$. Possible approaches to this problem are discussed in Section \ref{sec_conjectures}. For now, in the remainder of this section we give a brief description of $c_\bft$ for general $N$.

Consider the formula \eqref{eqn_Delta_kl_R} for $c_\bft$. 
It is a sum of basic terms of the form $C \cdot 2^{-\bftau \cdot \bft} \rsum_m (\bftau \cdot (\todd-\teven))$,
where $C$ is a constant and
$$  \rsum_m(a) =\sum_{i-j\ge a}
\frac1{2^{i+j}}
\bigl[\alpha^i\beta^j\bigr]
\biggl(\frac{\beta\alpha}{(1-\alpha)(1-\beta)}
\biggr)^m. $$  
As was the case with $\rsum_m$ for $m=0,1,2,3,4$ given explicitly earlier in this section, in general we have
$$
\rsum_m(a) = \begin{cases}
    2^a P^+_m(a) &\text{if } a>0, \\
    1- 2^{-a} P^-_m(a) &\text{if } a\leq 0,
\end{cases}
$$
where $P^+_m, P^-_m$ are certain polynomials of degree $m-1$. Since in our case the values $a$ are of the form $\bftau \cdot (\todd-\teven)$, we get a piecewise formula for $c_\bft$, where the domain $\NI^{2N}$ is divided into parts generated by the sets
 \begin{equation*}
    \{\bft: \bftau \cdot \teven < \bftau \cdot \todd \}, \quad \bftau \in \{\tO, \tL\}^{2N}.
 \end{equation*}
 We have already seen this when considering $N=2$, although to a lesser extent due to taking the difference
$c_{\bft_1}-c_{\bft_2}$ (which led to the cancellation of some terms).

Now, by Theorem \ref{thm_SW23} it is only necessary to prove Cusick's conjecture $N < N_0$, where $N_0$ is effective. However, due to the behavior described above (i.e. huge number of cases) we run into computational difficulties, even for relatively small $N$. Further still, even if we were to obtain a full expression for $c_\bft$ in each case, we would need to verify that for all possible $\bft \in \N$ we have $c_\bft > 1/2$. Clearly, if we were to take the computational approach for small $N$, further refinement of the method is required. In Section \ref{sec_conjectures} below we discuss some possibilities.

\section{Further results and conjectures} \label{sec_conjectures}

In this section we explore some possible ways to make further progress by dividing the problem into smaller parts. Roughly speaking, we analyze the effect on $c_\bft$ of the following operations:
\begin{enumerate}
    \item changing lengths of blocks of $\tO$s and $\tL$s;
    \item appending new blocks of digits.
\end{enumerate}
For simplicity we focus on the case where $\ell_{N-1} = \infty$, which includes all odd $t \in \N$. 
\subsection{Changing block lengths}

Based on explicit computations in Section \ref{sec:simplest_cases} and numerical experiments, we believe it may be fruitful to study the effect of prolonging blocks of $\tL$s and shortening blocks of $\tO$s in the binary expansion. Here and in the sequel, for a symbol $a$ and $n \geq 1$ we write $(a)^n = (a,a, \ldots, a)$, where $a$ is repeated $n$ times. To better distinguish the lengths of blocks of $\tL$s and $\tO$s we use the notation $(k_0, \ldots k_{N-1}) \ast (\ell_0, \ldots \ell_{N-1}) = (k_0, \ell_0, \ldots k_{N-1}, \ell_{N-1})$, as in \eqref{eqn_interleaving_def}.

We state a conjecture which says that among $\bft \in \NI^{2N}$ with $\ell_{N-1} = \infty$, a lower bound for $c_\bft$ is achieved in the extreme case:
$$  \bft_N \eqdef (\infty)^N \ast ((1)^{N-1},\infty) = (\infty,1,\ldots \infty,1,\infty,\infty),  $$
namely for blocks of $\tL$s having length $\infty$ and blocks of $\tO$ --- length $1$, yields .

\begin{conjecture} \label{con:c_ineq}
For all $\bft =(k_0, \ell_0,\ldots,k_{N-1},\ell_{N-1})\in \NI^{2N}$ such that $\ell_{N-1} = \infty$ we have
    \begin{equation} \label{eq:c_ineq}
        c_\bft \geq  c_{\bft_N}.
    \end{equation}  
\end{conjecture}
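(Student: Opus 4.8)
The plan is to deduce~\eqref{eq:c_ineq} from two single-coordinate monotonicity properties of $c_\bft$ and then to drive every block length to its extremal value, so that $\bft_N$ emerges as the global minimizer. Concretely, I would establish the following, valid for all $\bft\in\NI^{2N}$ with $\ell_{N-1}=\infty$. Property \textbf{(L)}: prolonging a block of $\tL$s decreases $c_\bft$, i.e.\ replacing some $k_i$ by a larger value $k_i'\in\NI$ yields $c_{\bft'}\le c_\bft$. Property \textbf{(O)}: prolonging an interior block of $\tO$s increases $c_\bft$, i.e.\ replacing some $\ell_i$ with $i<N-1$ by a larger value yields $c_{\bft'}\ge c_\bft$. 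Granting these, the conclusion is immediate: starting from an arbitrary admissible $\bft$, apply \textbf{(L)} to each $k_i$ in turn, sending it to $\infty$ (with Proposition~\ref{prop_limit_commute} justifying the passage to the limit), and then apply \textbf{(O)} in reverse to lower each $\ell_i$ with $i<N-1$ to its minimal admissible value $1$. Every move keeps $\ell_{N-1}=\infty$ and does not increase $c_\bft$, and the endpoint is exactly $\bft_N=(\infty,1,\ldots,\infty,1,\infty,\infty)$; hence $c_\bft\ge c_{\bft_N}$. This is precisely the mechanism behind the chain~\eqref{eq:c_inequalities} in the case $N=2$, now reorganised as a sequence of independent single-coordinate moves.

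For the proof of \textbf{(L)} and \textbf{(O)} I would use the explicit formula of Corollary~\ref{cor_c_explicit}. Since sending $k_i\to\infty$ annihilates every word $\bftau$ selecting the even position $2i$, the difference telescopes to
\[
c_\bft-c_{\bft|_{k_i=\infty}}
=\sum_{\substack{\bftau\in\{\tO,\tL\}^{2N}\\ \bftau_{2i}=\tL,\ \bftau\cdot\bft<\infty}}
(-1)^{\lvert\bftau\rvert_\tL}\,2^{-\bftau\cdot\bft}\,
\mathcal D_{\lvert\bftau\rvert_{\tL\tO}}\bigl(\bftau\cdot(\todd-\teven)\bigr),
\]
and the task is to show that this quantity is $\ge 0$; an analogous alternating sum, restricted to words with $\bftau_{2i+1}=\tL$, governs \textbf{(O)}. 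A cleaner handle on the same differences comes from the rank-one perturbation
$A_1^{k}-A_1^\infty=\alpha^{k}\begin{pmatrix}1\\0\end{pmatrix}\begin{pmatrix}1&-\frac{\beta}{1-\alpha}\end{pmatrix}$: writing $M(\bft)=L\,A_1^{k_i}\,R$ with $L,R$ the surrounding matrix products, one gets
$M(\bft)-M(\bft|_{k_i=\infty})=\alpha^{k_i}\bigl(L\begin{pmatrix}1\\0\end{pmatrix}\bigr)\bigl(\begin{pmatrix}1&-\frac{\beta}{1-\alpha}\end{pmatrix}R\bigr)$, so that $\gamma_\bft(\vartheta)-\gamma_{\bft|_{k_i=\infty}}(\vartheta)$ factors as $\alpha^{k_i}$ times the top-left entry of $L$ times the first component of the row vector $\begin{pmatrix}1&-\frac{\beta}{1-\alpha}\end{pmatrix}R$. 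Feeding this factorisation into the integral representation of Lemma~\ref{lem_cot} should let one read off the sign of the $c$-difference more transparently. Throughout I would exploit the reversal symmetry (Corollary~\ref{cor_reversal}) and the swap invariance (Corollary~\ref{cor_swap}) to collapse configurations of the surrounding blocks that are known \emph{a priori} to give equal contributions.

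The hard part will be the uniform control of the signs of the higher differences $\mathcal D_\ell(r)=\sum_{j=0}^{\ell}\binom{\ell}{j}(-1)^j\rsum_{N-j}(r)$, which are exactly the $\ell$-th finite differences of the sequence $m\mapsto\rsum_m(r)$, weighted against the alternating signs $(-1)^{\lvert\bftau\rvert_\tL}$ over exponentially many words $\bftau$. Already for $N=2$ the single difference $c_{\bft_1}-c_{\bft_2}$ split into three regimes according to the sign of $\bftau\cdot(\todd-\teven)$, using the piecewise shape $\rsum_m(a)=2^aP_m^+(a)$ or $1-2^{-a}P_m^-(a)$; for general $N$ this case analysis proliferates and a direct attack is infeasible. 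I expect the decisive step to be an induction on $N$: grouping the words $\bftau$ by the behaviour of the two blocks flanking position $2i$ should collapse the inner alternating sum to a lower-order difference, reducing \textbf{(L)} (respectively \textbf{(O)}) for $\NI^{2N}$ to the same statement for $\NI^{2(N-1)}$. To make such an induction close, one needs a structural positivity statement for the differences $\mathcal D_\ell$ --- plausibly a complete-monotonicity or total-positivity property of the family $m\mapsto\rsum_m(a)$, strongly suggested by the explicit formulas for $\rsum_0,\ldots,\rsum_4$ and by the array in~\eqref{eqn_OEIS_A369580} --- and proving that positivity is, in my view, the genuine obstacle.
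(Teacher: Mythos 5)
The statement you are trying to prove is Conjecture~\ref{con:c_ineq}, which the paper leaves \emph{open}: the authors only verify it numerically for all $t<2^{20}$ and discuss heuristics, so there is no proof to match yours against, and your proposal does not close the gap either. More importantly, both of your single-coordinate monotonicity properties are \emph{false} in the generality you state them, and the paper itself records the counterexamples. Against Property \textbf{(O)} (prolonging an interior block of $\tO$s increases $c_\bft$): the paper observes
\[
c_{(1,2,1)\ast(1,2,\infty)}\approx 0.602 \;<\; c_{(1,2,1)\ast(1,1,\infty)}\approx 0.609,
\]
so lengthening $\ell_1$ from $1$ to $2$ \emph{decreases} $c_\bft$ here. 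Against Property \textbf{(L)} (prolonging a block of $\tL$s decreases $c_\bft$): for $\bft=(3,2,2,2,1)\ast(1,1,1,3,\infty)$, increasing $k_4$ from $1$ to $2$ \emph{increases} $c_\bft$. This is exactly why the authors write that ``not all paths leading from $c_\bft$ to $c_{\bft_N}$ are viable'' and explicitly refrain from formulating a monotonicity conjecture. Your reduction of~\eqref{eq:c_ineq} to \textbf{(L)} and \textbf{(O)} therefore rests on false lemmas; at best one could hope for the restricted path the paper suggests (first send every $k_i$ all the way to $\infty$, and only afterwards shorten the $\ell_i$), but even that ordering is only supported numerically and is not implied by any coordinatewise monotonicity.

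Beyond the false lemmas, your plan concedes the decisive point: the sign control of the higher differences $\mathcal D_\ell(r)=\sum_{j=0}^{\ell}\binom{\ell}{j}(-1)^j\rsum_{N-j}(r)$ summed against $(-1)^{\lvert\bftau\rvert_\tL}$ over exponentially many words. The paper shows (Section~\ref{sec_conjectures}) that already the \emph{single} simplest instance --- prolonging the last block of $\tL$s in $\bft=(k,(\infty)^{N-1})\ast((1)^{N-1},\infty)$ --- reduces to the unproved inequality $a_{N,N-3}\ge 0$ in~\eqref{eq_binom_ineq}, which the authors leave as an open problem. So even the one special case of your Property \textbf{(L)} that the authors analyse in detail is not settled. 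The rank-one perturbation identity for $A_1^{k}-A_1^\infty$ and the appeal to Lemma~\ref{lem_cot} are reasonable tools (the paper uses essentially the same factorisation in that computation), but they do not by themselves produce the required positivity. In short: the architecture of your argument is contradicted by the paper's own data, and the analytic core is acknowledged by both you and the authors to be out of reach at present.
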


We have verified that this conjecture holds for the extended binary expansion $\bft$ of each positive integer $t<2^{20}$.
Clearly, it is only useful (in proving Cusick's conjecture) if the right-hand side of \eqref{eq:c_ineq} is strictly greater than $1/2$. 
By Theorem \ref{thm_SW23}, it suffices to check this inequality for $N < N_0$, where $N_0$ is an effective constant whose value can be extracted from the proof of the theorem. This is a quite tedious task, which likely yields a value $N_0$ too large to perform the computations on a standard computer.
Instead, we use a more elegant approach and give an exact expression for $c_{\bft_N}$ in terms of a finite sum, which yields a simple lower bound $>1/2$, valid for any $N \in \N$. First, we state two standard lemmas concerning binomial coefficients.

\begin{lemma} \label{lem:binomial_identities}
For all $m,n \in \N$ and $x \in \C \setminus \{1\}$ we have
    \begin{equation} \label{eq:identity1}
    \sum_{0 \leq k \leq n} \binom{m+k}{k} x^k = \frac{1}{(1-x)^{m+1}} - \frac{x^{n+1}}{1-x}  \sum_{0 \leq j \leq m} \binom{m+n+1}{m-j} \left( \frac{x}{1-x}\right)^j.
\end{equation} 
In particular, we have
\begin{equation} \label{eq:identity2}
    \sum_{0 \leq k \leq m}  \binom{m+k}{k} \frac{1}{2^k} = 2^m
\end{equation}
and 
\begin{equation} \label{eq:identity3}
    \sum_{0 \leq k \leq m}  \binom{m+k}{k} \frac{1}{4^k} = \left( \frac{4}{3}\right)^{m+1} - \frac{1}{3} \cdot \frac{1}{12^m} \sum_{0 \leq j \leq m} \binom{2m+1}{j} 3^j.
\end{equation}
\end{lemma}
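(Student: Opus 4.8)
The plan is to derive \eqref{eq:identity1} from the standard negative binomial generating function together with Vandermonde's convolution identity, and then to obtain \eqref{eq:identity2} and \eqref{eq:identity3} as the specializations $x=1/2$ and $x=1/4$ (both with $n=m$).

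First I would recall the generating function $\sum_{k\ge0}\binom{m+k}{k}x^k=(1-x)^{-(m+1)}$, valid for $|x|<1$, so that the partial sum on the left of \eqref{eq:identity1} equals this closed form minus the tail $\sum_{k\ge n+1}\binom{m+k}{k}x^k$. Reindexing the tail by $k=n+1+i$ and using $\binom{m+n+1+i}{n+1+i}=\binom{m+n+1+i}{m}$ reduces the task to evaluating $\sum_{i\ge0}\binom{M+i}{m}x^i$ with $M=m+n+1$. Here I would apply Vandermonde's identity $\binom{M+i}{m}=\sum_{j=0}^m\binom{M}{m-j}\binom{i}{j}$, interchange the two summations, and insert the elementary identity $\sum_{i\ge0}\binom{i}{j}x^i=x^j/(1-x)^{j+1}$. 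Pulling out the factor $x^{n+1}$ produced by the reindexing then yields precisely the right-hand side of \eqref{eq:identity1}.

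For \eqref{eq:identity2} I would set $x=1/2$ and $n=m$, so that $x/(1-x)=1$ and $(1-x)^{-(m+1)}=2^{m+1}$; the remaining finite sum becomes $\sum_{0\le j\le m}\binom{2m+1}{m-j}=\sum_{0\le i\le m}\binom{2m+1}{i}=2^{2m}$, the last equality being the symmetry of the row $\binom{2m+1}{\cdot}$ about its midpoint. This leaves $2^{m+1}-2^{-m}\cdot2^{2m}=2^m$. For \eqref{eq:identity3} I would set $x=1/4$ and $n=m$, so that $x/(1-x)=1/3$ and $(1-x)^{-(m+1)}=(4/3)^{m+1}$; after the substitution $i=m-j$ the finite sum turns into $3^{-m}\sum_{0\le j\le m}\binom{2m+1}{j}3^j$, and tracking the prefactors $\tfrac{(1/4)^{m+1}}{3/4}\cdot3^{-m}=\tfrac{1}{3}\cdot\tfrac{1}{12^m}$ gives exactly the stated form.

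I do not anticipate a genuine obstacle, since all three identities are classical; the only points requiring care are the bookkeeping of the two reindexings ($k=n+1+i$ and $i=m-j$) and the justification for interchanging the order of summation after applying Vandermonde, which is immediate because for each fixed $i$ only finitely many $j$ contribute. Convergence of the generating-function manipulations is guaranteed for $|x|<1$, and both specialization values $x=1/2,1/4$ lie in this range; the resulting identities, being polynomial relations in $m$, then hold for all $m\in\N$.
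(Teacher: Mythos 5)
Your proof is correct, but it follows a different route from the paper's. The paper obtains \eqref{eq:identity1} in one stroke by differentiating the finite geometric sum identity $\frac{x^m}{m!}\sum_{0\le k\le n}x^k=\frac{x^m}{m!}\cdot\frac{x^{n+1}-1}{x-1}$ exactly $m$ times (the left side turns termwise into $\sum_{0\le k\le n}\binom{m+k}{k}x^k$, and the right side is handled by the Leibniz rule), whereas you subtract the tail of the negative binomial series $\sum_{k\ge0}\binom{m+k}{k}x^k=(1-x)^{-(m+1)}$, reindex it, and evaluate it via Vandermonde's convolution together with $\sum_{i\ge0}\binom{i}{j}x^i=x^j/(1-x)^{j+1}$. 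Both arguments are elementary and of comparable length; the paper's is a purely algebraic/calculus manipulation valid verbatim for every $x\ne1$, while yours is more combinatorial and makes the shape of the right-hand side of \eqref{eq:identity1} (the sum over $j$ with $\binom{m+n+1}{m-j}$) transparent. One small point: your derivation is carried out for $|x|<1$, and your closing justification for extending it should refer to polynomiality in $x$ rather than in $m$ --- after multiplying through by $(1-x)^{m+1}$ both sides of \eqref{eq:identity1} are polynomials in $x$, so agreement on $|x|<1$ gives the identity for all $x\in\C\setminus\{1\}$. Since the two specializations actually used, $x=1/2$ and $x=1/4$, lie inside the unit disc, this does not affect \eqref{eq:identity2} or \eqref{eq:identity3}, and your evaluations of both (including the symmetry of the row $\binom{2m+1}{\cdot}$ and the prefactor $\tfrac{1}{3}\cdot\tfrac{1}{12^m}$) check out.
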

\begin{proof}
  The formula \eqref{eq:identity1} follows from $m$-fold differentiation of the function 
$$  \frac{x^m}{m!} \sum_{0 \leq k \leq n} x^k = \frac{x^m}{m!} \cdot  \frac{x^{n+1}-1}{x-1}.$$
Plugging in $n=m$ and $x=1/2, x=1/4$, we get \eqref{eq:identity2}, \eqref{eq:identity3}.
\end{proof}

The next lemma can be quickly derived from the famous Wallis' formula for $\pi/2$ (see \cite{Hirschhorn2015} for a proof).

\begin{lemma} \label{lem:central_binomial_bounds}
For all $n \geq 1$ we have
$$ \frac{1}{\sqrt{\pi(n+1/2)}} < \frac{1}{4^n} \binom{2n}{n} < \frac{1}{\sqrt{\pi n}}.$$
\end{lemma}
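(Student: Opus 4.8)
The plan is to express the central binomial coefficient as a Wallis-type product and then trap it between two explicitly monotone auxiliary sequences whose common limit is pinned down by Wallis' formula. First I would use $(2n)! = 2^n n!\,(2n-1)!!$ to rewrite
\[
P_n \eqdef \frac{1}{4^n}\binom{2n}{n} = \frac{(2n-1)!!}{(2n)!!} = \prod_{1\le k\le n}\frac{2k-1}{2k},
\]
so that the assertion becomes $P_n\sqrt{\pi n} < 1 < P_n\sqrt{\pi(n+1/2)}$ for every $n\ge 1$.

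Next I would introduce $b_n \eqdef P_n\sqrt{\pi n}$ and $c_n \eqdef P_n\sqrt{\pi(n+1/2)}$ and examine their consecutive ratios. Since $P_{n+1}/P_n = (2n+1)/(2n+2)$, a direct computation yields
\[
\left(\frac{b_{n+1}}{b_n}\right)^{2} = \frac{(2n+1)^2}{4n(n+1)}, \qquad \left(\frac{c_{n+1}}{c_n}\right)^{2} = \frac{(2n+1)(2n+3)}{4(n+1)^2}.
\]
The elementary identities $(2n+1)^2 = 4n(n+1)+1$ and $(2n+1)(2n+3) = 4(n+1)^2-1$ then show that the first ratio strictly exceeds $1$ while the second is strictly less than $1$. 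Hence $b_n$ is strictly increasing and $c_n$ is strictly decreasing.

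Finally I would invoke Wallis' formula in the form $\lim_{n\to\infty} P_n\sqrt{\pi n} = 1$, which is the content of the cited reference~\cite{Hirschhorn2015}; since $c_n = b_n\sqrt{1+1/(2n)}$, the sequence $c_n$ shares this limit. A strictly increasing sequence tending to $1$ must lie below $1$, and a strictly decreasing sequence tending to $1$ must lie above $1$, so $b_n < 1 < c_n$ for all $n\ge 1$, which is exactly the desired pair of inequalities.

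The only non-elementary ingredient is the evaluation of the common limit, for which I would rely entirely on the cited Wallis formula; the monotonicity, being a one-line comparison of two quadratics, poses no real difficulty. The single point meriting care is the \emph{strictness} of both inequalities, and this is guaranteed precisely by the nonvanishing ``$+1$'' and ``$-1$'' appearing in the two quadratic identities above, which keep each ratio strictly away from $1$.
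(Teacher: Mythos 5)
Your argument is correct: the two ratio computations are right, the identities $(2n+1)^2=4n(n+1)+1$ and $(2n+1)(2n+3)=4(n+1)^2-1$ do give strict monotonicity of $b_n$ and $c_n$, and Wallis' product (whose partial products are exactly $1/((2n+1)P_n^2)$) supplies the common limit $1$, so the sandwich is complete. The paper itself gives no proof and simply defers to the cited reference, and your monotone-sequences argument is essentially the standard one found there, so this matches the intended approach.
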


We are now ready to compute an exact expression for $c_{\bft_N}$ and a simple lower bound for this value.

\begin{proposition} \label{prop:c_infty}
    For all $N \in \N$ we have
    \begin{equation} \label{eq:c_infty}
        c_{\bft_N} = \frac{1}{2} +\frac{1}{4^N} \binom{2N-2}{N-1} - \frac{1}{12^N} \sum_{0 \leq j \leq N-1} \binom{2N-1}{j} 3^j. 
    \end{equation}
Moreover, the following inequality holds:
$$c_{\bft_N} >  \frac{1}{2} + \frac{1}{4 \sqrt{\pi}} N^{-3/2}.$$
\end{proposition}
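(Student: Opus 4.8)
The plan is to obtain a closed form for $\gamma_{\bft_N}$ and then read off $c_{\bft_N}$ by extracting and summing coefficients. First I would evaluate the matrix product $M(\bft_N)$ of \eqref{eqn_Mt_def}. Since every $k_j=\infty$ and $\ell_0=\dots=\ell_{N-2}=1$, $\ell_{N-1}=\infty$, it factors as $M(\bft_N)=(A_1^\infty A_0)^{N-1}\,A_1^\infty A_0^\infty$. The decisive observation is that $P:=A_1^\infty A_0$ has $\det P=0$ and $\operatorname{tr}P=\beta/(1-\alpha)=:\tau$, so by Cayley--Hamilton $P^{N-1}=\tau^{N-2}P$ for $N\ge2$; this rank-one collapse removes all combinatorial complexity. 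Multiplying out and reading off the top-left entry (cf. \eqref{eqn_gamma_blocks_infinite}) gives the clean closed form
\[\gamma_{\bft_N}(\alpha,\beta)=\frac{\alpha\beta^N}{(1-\alpha)^N(1-\beta)}.\]

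Next I extract coefficients. Expanding $(1-\alpha)^{-N}(1-\beta)^{-1}$ yields $[\alpha^i\beta^j]\gamma_{\bft_N}=\binom{N+i-2}{i-1}$ for $i\ge1$, $j\ge N$ (and $0$ otherwise). From the definition $c_{\bft_N}=\Delta_{\bft_N}(0)=\sum_{i\ge j}2^{-(i+j)}[\alpha^i\beta^j]\gamma_{\bft_N}$ I would interchange the order of summation, summing over $j$ for each fixed $i$ (a finite geometric sum $\sum_{j=N}^{i}2^{-j}=2^{1-N}-2^{-i}$); this reduces everything to the two one-dimensional tails $S_1=\sum_{i\ge N}\binom{N-2+i}{N-1}2^{-i}$ and $S_2=\sum_{i\ge N}\binom{N-2+i}{N-1}4^{-i}$, with $c_{\bft_N}=2^{1-N}S_1-S_2$. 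Writing each tail as the full sum $\sum_{i\ge1}\binom{N-2+i}{N-1}x^i=\frac{x}{(1-x)^N}$ (at $x=\tfrac12,\tfrac14$) minus its head $\sum_{i=1}^{N-1}$, and evaluating the heads by \eqref{eq:identity2} and \eqref{eq:identity3} (with $m=N-1$), gives $2^{1-N}S_1=\tfrac12+2B$ and $S_2=B+12^{-N}T$, where $B:=4^{-N}\binom{2N-2}{N-1}$ and $T:=\sum_{0\le j\le N-1}\binom{2N-1}{j}3^j$. Subtracting yields exactly \eqref{eq:c_infty}.

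For the inequality I would write $c_{\bft_N}-\tfrac12=B-R$, where $R:=12^{-N}T=S_2-B=\sum_{i\ge N+1}\binom{N-2+i}{N-1}4^{-i}$ is the tail of $S_2$ past its first term. The ratio of consecutive summands of $R$ equals $\frac{N-1+i}{4i}$, which is decreasing in $i$ and hence at most $\frac{N}{2(N+1)}$ for $i\ge N+1$; comparing with the geometric series gives $R\le\frac{2N-1}{4N}\cdot\frac{2N+2}{N+2}\,B$, whence $B-R\ge\frac{3N+1}{2N(N+2)}B$. I then insert the lower bound $B>\frac1{4\sqrt{\pi(N-1/2)}}$ from Lemma~\ref{lem:central_binomial_bounds} (with $n=N-1$), reducing the target $c_{\bft_N}-\tfrac12>\frac1{4\sqrt\pi}N^{-3/2}$ to the elementary polynomial inequality $5N^3-8N^2-7N+8>0$, valid for all $N\ge2$; the case $N=1$ (where $c_{\bft_1}=2/3$) is checked directly.

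I expect the main obstacle to be the tightness of the tail estimate. The crude bound (ratio $\le\tfrac12$) only yields $R\le 2\cdot\frac{2N-1}{4N}B$, hence the weaker constant $\frac1{8\sqrt\pi}$, missing the stated target by a factor $2$. Landing exactly on $\frac1{4\sqrt\pi}$ forces the use of the sharper ratio $\frac{N}{2(N+1)}$ together with the $(N-\tfrac12)$-form of the central binomial bound, plus a separate treatment of the smallest values of $N$. By contrast, the exact-formula part is essentially mechanical once the rank-one collapse of $P$ has produced the closed form for $\gamma_{\bft_N}$.
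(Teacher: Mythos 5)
Your proposal is correct and follows essentially the same route as the paper: the same closed form $\gamma_{\bft_N}=\alpha\beta^N(1-\alpha)^{-N}(1-\beta)^{-1}$, coefficient extraction via the identities \eqref{eq:identity2} and \eqref{eq:identity3}, and a geometric-series tail bound combined with Lemma~\ref{lem:central_binomial_bounds}. The only noteworthy difference is that your sharper ratio bound $\tfrac{N}{2(N+1)}$ on the tail of $S_2$ handles all $N\ge2$ uniformly, whereas the paper's estimate on $\sum_j 3^j\binom{2N-1}{j}$ requires $N\ge5$ and a separate numerical check for $1\le N<5$.
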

\begin{proof}
    We have
    $$  \gamma_{\bft_N}(\alpha,\beta) = 
    \begin{pmatrix} 1&0 \end{pmatrix} \bigl(A_1^\infty  A_0 \bigr)^{N-1} A_1^\infty
        \begin{pmatrix} 1 \\ \frac{\alpha}{1-\beta} \end{pmatrix} 
    =\left( \frac{\beta}{1-\alpha} \right)^N \frac{\alpha}{1-\beta}.  $$
    Hence,
    \begin{align*}
        \gamma_{\bft_N}\left(\frac{\e(\vartheta)}{2},\frac{\e(-\vartheta)}{2} \right) &=  \frac{\e((1-N)\theta)}{2^{N+1}} \sum_{j,k \geq 0} \frac{\e((k-j) \theta)}{2^{k+j}} \binom{N+k-1}{k} \\
        &= \frac{\e((1-N)\theta)}{2^{N+1}} \sum_{m \in \Z} 2^m \e(m \theta) \sum_{k \geq  \max\{m, 0 \}} \frac{1}{4^k} \binom{N+k-1}{k} \\
        &= \e((1-N)\theta) \sum_{m \in \Z} a_m \e(m \theta),
    \end{align*} 
    where
    $$ a_m =  2^{m-N-1} \left( \left(\frac{4}{3}\right)^N - \sum_{0 \leq k < \max\{m, 0 \}} \frac{1}{4^k} \binom{N+k-1}{k} \right).$$
    We thus get
   \begin{align}
       c_{\bft_N} &= \sum_{m \geq N-1} a_m = 1 - \sum_{-\infty \leq m < N-1} a_m \nonumber \\
       &= 1 - \frac{1}{4} \cdot \left(\frac{4}{3}\right)^N  + \frac{1}{2^{N+1}} \sum_{0 \leq m < N-1} 2^m \sum_{0 \leq k < m} \frac{1}{4^k} \binom{N+k-1}{k}. \label{eq:c_infty2}
   \end{align}
   Switching the order of summation, the double sum can be rewritten as
   \begin{align}
   &\sum_{0 \leq k < N-2}  \frac{1}{4^k} \binom{N+k-1}{k} (2^{N-1}-2^{k+1})  = \nonumber \\
   &\frac{1}{2^{N-1}} \binom{2N-2}{N-1} + 2^{N-1} \sum_{0 \leq k \leq N-1} \frac{1}{4^k} \binom{N+k-1}{k} -2 \sum_{0 \leq k \leq N-1} \frac{1}{2^k} \binom{N+k-1}{k} = \label{eq:double_sum}\\
   & \frac{1}{2^{N-1}} \binom{2N-2}{N-1} +2^{N-1} \left(  \left( \frac{4}{3}\right)^{N} - \frac{1}{3} \cdot \frac{1}{12^{N-1}} \sum_{0 \leq j \leq m} \binom{2N-1}{j} 3^j\right)- 2^N, \nonumber
   \end{align}
   where we have applied the identities \eqref{eq:identity2} and \eqref{eq:identity3} to $m=N-1$.
  Plugging this into \eqref{eq:c_infty2}, after some cancellation we finally reach \eqref{eq:c_infty}.

   In order to get a lower bound, we extract the final summand in \eqref{eq:c_infty} and bound the remaining terms by a geometric progression
\begin{align*}
   \sum_{0 \leq j \leq N-1} 3^j\binom{2N-1}{j} &\leq  3^{N-1} \binom{2N-1}{N-1} + \binom{2N-1}{N-2} \sum_{0 \leq j \leq N-2} 3^j \\
    &<  3^{N-1} \binom{2N-1}{N-1} + \frac{ 3^{N-1}}{6}  \binom{2N-1}{N-2} = 3^{N-1}\left(\frac{N}{1+N}-\frac{1}{6N} \right)\binom{2N-2}{N-1} \\
    &\leq 3^{N-1}\left(1-\frac{1}{N} \right)\binom{2N-2}{N-1},
\end{align*}
where the last inequality holds for $N \geq 5$.

Using Lemma \ref{lem:central_binomial_bounds}, for $N \geq 5$ we finally obtain
\begin{equation} \label{eq:c_infty_ineq}
    c_\bft > \frac{1}{2} + \frac{1}{4^N\cdot N} \binom{2N-2}{N-1} > \frac{1}{2} + \frac{1}{4 N \sqrt{\pi (N-1/2) }} > \frac{1}{2} +\frac{1}{4 \sqrt{\pi}} N^{-3/2}.
\end{equation}
We can check numerically that the inequality between outermost terms also holds for $1 \leq N < 5$, which ends the proof.
\end{proof}

\begin{remark}
The lower bound $1/2+N^{-3/2}/(4\sqrt\pi)$ should be compared to~\cite[Equation~2.30]{SpiegelhoferWallner2023}.
It implies that
\begin{equation}\label{eqn_SW2023}
c_t>1/2+CN^{-3/2}
\end{equation}
for all $t$ having $N$ blocks in binary, where $N$ is large enough, and $C$ is some absolute constant.
The requirement that $N$ be large is a significant restriction, not present in~\eqref{eq:c_infty_ineq}.
Theorem~2 in~\cite{DrmotaKauersSpiegelhofer2016} is of a similar nature.
It states that the sequence of integers $t_j$ having binary expansion $(\tL\tO)^{j-1}\tL$ satisfies
\begin{equation}\label{eqn_DKS2016}
c_{t_j}=\frac12+\frac{\sqrt{3}}{4\sqrt{2\pi}}j^{-1/2}+\LandauO\bigl(j^{-3/2}\bigr).
\end{equation}
\end{remark}

As already discussed, we get the following corollary. 

\begin{corollary}
   If Conjecture \ref{con:c_ineq} holds for all $N \leq N_0$, then Cusick's conjecture is true.
\end{corollary}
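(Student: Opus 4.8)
The plan is to split on the number $N$ of maximal blocks of $\tL$s in the binary expansion of $t$, using Theorem~\ref{thm_SW23} to dispatch the case of many blocks and the assumed instance of Conjecture~\ref{con:c_ineq} together with Proposition~\ref{prop:c_infty} for the case of few blocks. First I would reduce to odd $t$: since $\gamma_{2t}=\gamma_t$ by~\eqref{eqn_gamma_recurrence}, we have $c_{2t}=c_t$, so every $t\geq1$ has the same Cusick density as its odd part, while the trivial case $t=0$ gives $c_0=1>1/2$ directly. Thus it suffices to prove $c_t>1/2$ for odd $t\geq1$, each of which has some number $N\geq1$ of maximal blocks of $\tL$s.

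For such a $t$, if $N\geq N_0$ then Theorem~\ref{thm_SW23} immediately yields $c_t>1/2$. So assume $1\leq N<N_0$. The extended binary expansion of $t$ is then $\bft=(k_0,\ell_0,\ldots,k_{N-2},\ell_{N-2},k_{N-1},\infty)\in\NI^{2N}$ with $\ell_{N-1}=\infty$, and $c_t=c_\bft$ as established in Section~\ref{sec_preliminaries}. Since we are assuming Conjecture~\ref{con:c_ineq} for all $N\leq N_0$, in particular for this value $N<N_0$, the inequality~\eqref{eq:c_ineq} gives $c_\bft\geq c_{\bft_N}$. By Proposition~\ref{prop:c_infty} we have $c_{\bft_N}>\tfrac12+\tfrac1{4\sqrt\pi}N^{-3/2}>\tfrac12$, hence $c_t=c_\bft>1/2$. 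Combining both ranges of $N$ covers all $t$, which is exactly Cusick's conjecture (Conjecture~\ref{conj_cusick}).

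The proof is essentially a bookkeeping synthesis of these three inputs, so there is no genuine analytic obstacle; the real content lies entirely in the assumed Conjecture~\ref{con:c_ineq} and in the unconditional bound of Proposition~\ref{prop:c_infty}. The only points requiring care are organizational: one must check that the dichotomy at $N_0$ is exhaustive (Theorem~\ref{thm_SW23} covers $N\geq N_0$ while the conjecture is invoked only for $N<N_0$, so the two ranges overlap harmlessly at $N=N_0$), that the extended expansion attached to an odd $t$ indeed satisfies $\ell_{N-1}=\infty$ so that it falls within the scope of Conjecture~\ref{con:c_ineq}, and that the reduction to odd $t$ together with the trivial case $t=0$ is carried out before the dichotomy.
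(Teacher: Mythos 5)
Your proposal is correct and follows exactly the argument the paper sketches in the surrounding discussion: dispatch $N\geq N_0$ via Theorem~\ref{thm_SW23}, and for $N<N_0$ combine the assumed instance of Conjecture~\ref{con:c_ineq} with the unconditional bound $c_{\bft_N}>1/2$ from Proposition~\ref{prop:c_infty}. The additional bookkeeping you supply (reduction to odd $t$, the case $t=0$, and checking that $\ell_{N-1}=\infty$ places $\bft$ in the scope of the conjecture) is all accurate and merely makes explicit what the paper leaves implicit.
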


\begin{remark}
    With additional effort it is possible to obtain a tighter lower bound by extracting $m$ final terms (instead of just one) from the sum in \eqref{eq:c_infty}.    
\end{remark}

Now, numerical results suggest that it may be possible to refine Conjecture \ref{con:c_ineq} by inserting intermediate inequalities into $c_{\bft} \geq c_{\bft_N}$. More precisely, these are obtained by changing one block length at a time, setting $k_j = \infty$ or $\ell_j=1$. We have already seen an example of this in the case $N=2$, where we have obtained the chain of inequalities \eqref{eq:c_inequalities}.

In the considered case $\ell_{N-1} = \infty$ we note that not all ``paths'' leading from $c_\bft$ to $c_{\bft_N}$ are viable. In our experiments, we have observed that an ``incorrect'' inequality can occur when a block of $\tO$s of length $2$ is shortened to length $1$, as in the following example  (the modified block is written in bold):
$$     c_{(1,2,1)\ast(1,\mathbf{2},\infty)} \approx 0.602 < c_{(1,2,1)\ast(1,1,\infty)} \approx 0.609. $$
 On the other hand, prolonging blocks of $\tL$s to $\infty$, and only then shortening the block of $\tO$s is viable: 
   \begin{align*}
     c_{(\mathbf{1},2,1)\ast(1,2,\infty)} &> c_{(\infty,\mathbf{2},1)\ast(1,2,\infty)} > c_{(\infty,\infty,\mathbf{1})\ast(1,2,\infty)} \\
     &> c_{(\infty,\infty,\infty)\ast(1,\mathbf{2},\infty)}  >   c_{(\infty,\infty,\infty)\ast(1,1,\infty)}.
 \end{align*} 

 Furthermore, it seems that values $c_\bft$ are usually decreasing in $k_j$ and increasing in $\ell_j$. However, this is not always the case, as seen above for blocks of $\tO$s. For blocks of $\tL$s a counterexample is given by $\bft= (3,2,2,2,1) \ast (1,1,1,3,\infty)$, 
 where increasing $k_4=1$ to $2$ yields a larger value $c_\bft$. We refrain from formulating a detailed conjecture concerning monotonicity, as one would need to impose certain restrictions on block lengths, based on more extensive numerical experiments.

We now take a look at one of the simplest cases in Conjecture \ref{con:c_ineq}, namely the inequality
$c_\bft \geq c_{\bft_N}$, where $\bft = (k, (\infty)^{N-1}) \ast ((\tL)^{N-1},\infty)$ and $k \in \N$ is a variable. Writing this in the form of a binary expansion, we would like to know whether prolonging the final block of $\tL$s in  $\tO^\infty \tL^\infty \tO \cdots \tL^\infty \tO \tL^k$ to infinite length yields a smaller value $c_{\bft}$. 
Using the definition of $\gamma_\bft$, we get
\begin{align*}
    \gamma_{\bft} - \gamma_{\bft_N}  &= \begin{pmatrix}
    1 & 0
\end{pmatrix}  
(A_1^k-A_1^\infty) A_0 \left( A_1^\infty A_0 \right)^{N-1} \begin{pmatrix}
    1 \\ 0
\end{pmatrix} \\
&= \alpha^{k+1}  \frac{1-\alpha-\beta}{(1-\alpha)(1-\beta)} \left( \frac{\beta}{1-\alpha}\right)^{N-1} \\
&= \alpha^{k+1} \beta^{N-1} \left(1- \frac{\alpha}{1-\beta} \right) \frac{1}{(1-\alpha)^N}.
\end{align*}  
Letting $\alpha=\e(\vartheta)/2, \beta=\e(-\vartheta)/2$, this expression becomes
$$\frac{\e((k-N+2)\vartheta)}{2^{k+N}} \left(1- \sum_{m \geq -1} \frac{\e(-m \vartheta)}{2^{m+2}}  \right) \sum_{j \geq 0} \frac{1}{2^j} \binom{N+j-1}{j} \e(j\vartheta)
    = \frac{\e((k-N+2)\vartheta)}{2^{k+N}} \sum_{j \in \Z} g_j \e(j\vartheta),
$$  
for suitable coefficients $g_j$. Substituting $q = N-k-2$, we would like to show that for any $q \leq N-3$ the sum $\sum_{j \geq q} g_j$ is nonnegative.

Starting with the case $q \leq 0$, we get
$$ \sum_{j \geq q} g_j = \sum_{j \geq 0} \frac{1}{2^j}  \binom{N+j-1}{j} \left(1- \sum_{-1 \leq m \leq j-q} \frac{1}{2^{m+2}} \right)  = 2^{q-2} \sum_{j \geq 0} \frac{1}{4^j} \binom{N+j-1}{j} = 2^{q-2} \left(\frac{4}{3} \right)^N.$$
If $q \geq 1$, then we have
\begin{align} \label{eq:g_sum}
    \sum_{j \geq q} g_j &= \sum_{j \geq q} \frac{1}{2^j}  \binom{N+j-1}{j} - \sum_{j \geq q-1} \frac{1}{2^j}  \binom{N+j-1}{j} \sum_{-1 \leq m \leq j-q} \frac{1}{2^{m+2}} \nonumber \\
    &=  2^{q-2} \sum_{j \geq q}\frac{1}{4^j} \binom{N+j-1}{j} - \frac{1}{2^q} \binom{N+q-2}{q-1}. 
\end{align} 
By identity \eqref{eq:identity1} applied to $n=q-1, m=N-1,x=1/4$, we get 
$$  \sum_{j \geq q}\frac{1}{4^j} \binom{N+j-1}{j} = \left( \frac{4}{3} \right)^N- \sum_{j \leq q-1}\frac{1}{4^j} \binom{N+j-1}{j} = \frac{1}{4^{q-1}} \sum_{0 \leq j \leq N-1} \frac{1}{3^{N-j}} \binom{N+q-1}{j}.$$
Plugging this into  \eqref{eq:g_sum}, our task reduces to proving that for any integers $q,N$ such that $1 \leq q \leq N-3$ we have
\begin{equation} \label{eq_binom_ineq}
    \frac{1}{3^N}\sum_{0 \leq j \leq N-1} 3^j \binom{N+q-1}{j} - \binom{N+q-2}{q-1} > 0.
\end{equation}   
Letting $a_{N,q}$ denote the left-hand side of this inequality, we get the recurrence relation
$$  a_{N,q}=a_{N,q-1} + a_{N-1,q},$$
with boundary conditions $a_{N,1} = (4/3)^N-2, a_{1,q}=-2/3$. The generating function for these terms is
$$ \sum_{N,q \geq 1} a_{N,q} x^N y^q = \frac{ 2xy(2 x-1)}{(4 x-3) (x+y-1)}. $$
By induction it suffices to only consider $q=N-3$, which reduces our initial question to the following problem.

\begin{problem}
    Prove that $a_{N,N-3} \geq 0$ for all $N \geq 4$.
\end{problem}

We do not pursue the problem further --- methods from analytic combinatorics should be applicable here. This example already showcases the difficulties that arise when trying to prove Conjecture \ref{con:c_ineq}. For general $\bft \in \NI^{2N}$ we would likely obtain inequalities similar to \eqref{eq_binom_ineq} with multiple additional parameters (block lengths).

To conclude this subsection, we take a look at a potential upper bound on $c_\bft$ for fixed $N$. In analogy to Conjecture \ref{con:c_ineq}, we expect that such a bound is obtained by shortening blocks of $\tL$s, and prolonging blocks of $\tO$s, namely for $\bft = (1)^N \ast (\infty)^N$. In the following proposition, we derive a simple formula for this value $c_\bft$.

\begin{proposition}
    For all $N \in \N$ we have
    $$  c_{(1)^N \ast (\infty)^N} = \frac{1}{2} + \frac{1}{4^N}\binom{2N-1}{N}.$$
\end{proposition}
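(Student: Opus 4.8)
The plan is to mirror the proof of Proposition~\ref{prop:c_infty}: first derive a closed form for the bivariate power series $\gamma_{\bft}$, then read off $c_{\bft}$ from its definition, and finally reduce the resulting finite sum using a binomial identity already recorded in Lemma~\ref{lem:binomial_identities}.

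First I would evaluate the matrix product. For $\bft=(1)^N\ast(\infty)^N=(1,\infty,\ldots,1,\infty)$ we have $k_j=1$ and $\ell_j=\infty$ for all $j$, so by~\eqref{eqn_Mt_def} $M(\bft)=(A_1A_0^\infty)^N$. Multiplying the explicit forms of $A_1$ and $A_0^\infty$ gives
$$A_1A_0^\infty=\begin{pmatrix}\frac{\alpha}{1-\beta}&0\\[2pt]\frac{\alpha}{1-\beta}&0\end{pmatrix},$$
which has the shape $\left(\begin{smallmatrix}x&0\\x&0\end{smallmatrix}\right)$ with $x=\alpha/(1-\beta)$. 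Powers of such matrices are immediate: an easy induction gives $\left(\begin{smallmatrix}x&0\\x&0\end{smallmatrix}\right)^N=\left(\begin{smallmatrix}x^N&0\\x^N&0\end{smallmatrix}\right)$, since the first column is reproduced at each step while the second stays zero. Reading off the top-left entry via~\eqref{eqn_gamma_blocks_infinite} yields
$$\gamma_{\bft}(\alpha,\beta)=\left(\frac{\alpha}{1-\beta}\right)^N=\alpha^N\sum_{j\ge0}\binom{N+j-1}{j}\beta^j.$$

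Next I would extract $c_{\bft}$ from~\eqref{eqn_Delta_def}. Since only the $\alpha$-exponent $i=N$ occurs, the constraint $i\ge j$ becomes $j\le N$, and the defining sum collapses to
$$c_{\bft}=\frac1{2^N}\sum_{0\le j\le N}\frac1{2^j}\binom{N+j-1}{j}.$$
I would then split off the top term $j=N$, which contributes $2^{-N}\binom{2N-1}{N}$, and apply~\eqref{eq:identity2} with $m=N-1$ to the remaining sum $\sum_{0\le j\le N-1}2^{-j}\binom{(N-1)+j}{j}=2^{N-1}$. Combining the two pieces yields $c_{\bft}=\tfrac12+4^{-N}\binom{2N-1}{N}$, as desired.

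There is no genuine obstacle here; the whole argument is a short computation. The only point demanding slight care is the interface with the known identity: the summand must be recognized as $\binom{(N-1)+j}{j}$ before~\eqref{eq:identity2} applies, and the natural summation range $0\le j\le N$ exceeds the range of that identity by exactly one term. Isolating this single extra term is precisely what produces the central binomial coefficient $\binom{2N-1}{N}$ in the final formula.
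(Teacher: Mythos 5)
Your proposal is correct and follows essentially the same route as the paper: compute $M(\bft)=(A_1A_0^\infty)^N$ to get $\gamma_{\bft}=(\alpha/(1-\beta))^N$, read off $c_{\bft}=2^{-N}\sum_{0\le j\le N}2^{-j}\binom{N+j-1}{j}$, and close with identity~\eqref{eq:identity2} at $m=N-1$ plus the isolated $j=N$ term. The only cosmetic difference is that you work directly with the coefficient definition~\eqref{eqn_Delta_def} rather than first substituting $\alpha=\e(\vartheta)/2$, $\beta=\e(-\vartheta)/2$ as the paper does.
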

\begin{proof}
Put $\bft = (1)^N \ast (\infty)^N$.
We have
$$ \gamma_\bft(\alpha,\beta) = \begin{pmatrix} 1&0 \end{pmatrix} \bigl(A_1 A_0^\infty \bigr)^N
\begin{pmatrix} 1 \\0 \end{pmatrix} = \left(\frac{\alpha}{1-\beta} \right)^N,$$
and thus
    $$ \gamma_\bft\left(\frac{\e(\vartheta)}{2}, \frac{\e(-\vartheta)}{2} \right) = \e(N\theta) \sum_{k \geq 0} \frac{1}{2^{k+N}} \binom{N+k-1}{k} \e(-k \theta).$$
    It follows that
    $$ c_\bft = \frac{1}{2^N} \sum_{0 \leq k \leq N} \frac{1}{2^k} \binom{N+k-1}{k} = \frac{1}{2} + \frac{1}{4^N}\binom{2N-1}{N},$$
    where we have used the identity \eqref{eq:identity2}.
\end{proof}

\subsection{Appending blocks}
The following conjecture says that ``replacing the rightmost $\tL$ of an odd integer $t$ by $\tO\tL^\infty$'' does not increase the value $c_{\bft}$.
\begin{conjecture}\label{con_convolution}
For each odd integer $t\ge1$ we have
\begin{equation}\label{eqn_conj_appending111}
c_t\ge\lim_{K\to\infty}c_{2^Kt-1}.
\end{equation}
\end{conjecture}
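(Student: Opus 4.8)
The plan is to evaluate the right-hand limit in closed form, recast the conjecture as a single concrete inequality for the left tail of $\mu_t$, and then attack that inequality; the last step is where the genuine difficulty lies.

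\emph{Identifying the limit.} Since $2^Kt-1=2(2^{K-1}t-1)+1$ and $\gamma_{2^{K-1}t}=\gamma_t$ by the first line of~\eqref{eqn_gamma_recurrence}, the second line gives the linear recurrence $f_K\eqdef\gamma_{2^Kt-1}=\alpha f_{K-1}+\beta\gamma_t$ for $K\ge1$. As $\lvert\alpha\rvert=1/2$, iterating yields $f_K=\alpha^Kf_0+\beta\gamma_t\,\tfrac{1-\alpha^K}{1-\alpha}\to\tfrac{\beta}{1-\alpha}\gamma_t$ uniformly. Prolonging the newly created bottom block of $\tL$s produces a sequence of extended expansions converging in the metric $d$ to a limit $\bft'$ that still has infinite leading block, so $\gamma_{\bft'}$ is a genuine characteristic function and Proposition~\ref{prop_limit_commute} gives $\lim_K c_{2^Kt-1}=c_{\bft'}$, where
\[\gamma_{\bft'}=\frac{\beta}{1-\alpha}\,\gamma_t=\overline{\gamma_1}\,\gamma_t,\]
the last equality because $\overline{\alpha/(1-\beta)}=\beta/(1-\alpha)$ when $\bar\alpha=\beta$, $\bar\beta=\alpha$.

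\emph{Reduction to a tail inequality.} The factor $\overline{\gamma_1}$ is the characteristic function of the reflected law $\check\mu_1(i)=\mu_1(-i)=2^{-i-2}$ (for $i\ge-1$), so $\mu_{\bft'}=\mu_t\ast\check\mu_1$. Writing $c_{\bft'}=\sum_j\mu_t(j)\sum_{i\ge-j}\check\mu_1(i)$ and using that the inner sum equals $1,\tfrac12,2^{\,j-1}$ according as $j\ge1,\ j=0,\ j\le-1$, everything collapses to
\[c_t-c_{\bft'}=\tfrac12\Bigl(\mu_t(0)-\sum_{j\ge1}\mu_t(-j)2^{-j}\Bigr).\]
Equivalently, extracting Fourier coefficients (the weight $\sum_{k\ge1}2^{-k}\e(k\vartheta)=\e(\vartheta)/(2-\e(\vartheta))$ produces the left tail), the conjecture is the single inequality $\tfrac1{2\pi}\int_0^{2\pi}\gamma_t(\vartheta)\tfrac{1-\e(\vartheta)}{2-\e(\vartheta)}\,\mathrm d\vartheta\ge0$, i.e.
\[c_t-c_{\bft'}=\frac1{2\pi}\int_0^{2\pi}\frac{3(1-\cos\vartheta)\,\realpart\gamma_t(\vartheta)+\sin\vartheta\,\imagpart\gamma_t(\vartheta)}{5-4\cos\vartheta}\,\mathrm d\vartheta\ge0.\]

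\emph{The main obstacle.} Everything now rests on proving $\mu_t(0)\ge\sum_{j\ge1}\mu_t(-j)2^{-j}$ for all odd $t$, and this is delicate because it is tight: for $t=5$ one has $\mu_5(-j)=\tfrac38\cdot2^{-j}$, so the two sides are \emph{equal} and $c_5=\lim_K c_{2^K\cdot5-1}$. In particular the tempting pointwise bound $\mu_t(0)\ge\mu_t(-j)$ (which would force $\sum_{j\ge1}\mu_t(-j)2^{-j}\le\mu_t(0)$) already fails at $t=5$, where $\mu_5(-1)=3/16>\mu_5(0)=1/8$. What is available is that the left tail of $\mu_t$ decays geometrically with ratio $1/2$, up to a polynomial factor governed by the number of $\tO$-blocks, because of the $(1-\beta)^{-1}$-type factors in $\gamma_{\bft}$; hence the target series is controlled by its first few terms. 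I see two plausible routes. Route A is an induction through~\eqref{eqn_gamma_recurrence}, tracking a suitably strengthened family of weighted tail functionals for \emph{all} $t$ (not only odd), the difficulty being that the recurrence mixes $\mu_t$ and $\mu_{t+1}$ at shifted arguments and the $t=5$ equality shows any invariant must be essentially sharp. Route B is analytic: bound the displayed integral using the component decomposition of Theorem~\ref{thm_components}/Proposition~\ref{prp_components} together with the explicit base cases $\rsum_m$, exploiting that the positive weight $3(1-\cos\vartheta)/(5-4\cos\vartheta)$ vanishes to order two at $\vartheta=0$ while $\sin\vartheta\,\imagpart\gamma_t=\LandauO(\vartheta^4)$ there, so that the task reduces to controlling the oscillatory $\imagpart\gamma_t$ contribution away from the origin. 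In either approach, the sharpness exhibited by $t=5$ is the crux and is the reason the statement is only conjectured.
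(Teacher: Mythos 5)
This statement is a \emph{conjecture}; the paper does not prove it, and neither do you --- so strictly speaking there is a gap, namely the entire crux, which you openly acknowledge. What you do establish is correct and matches (indeed slightly sharpens) the paper's own discussion. Your identification of the limit agrees with the paper's computation: iterating $\gamma_{2^Kt-1}=\alpha\gamma_{2^{K-1}t-1}+\beta\gamma_t$ gives $\gamma_{2^Kt-1}\to\frac{\beta}{1-\alpha}\gamma_t$, and the paper reformulates the conjecture as $\breve c_t\ge0$ with $\breve\gamma_t=\frac{1-\alpha-\beta}{1-\alpha}\gamma_t=\gamma_t-\frac{\beta}{1-\alpha}\gamma_t$, i.e.\ as nonnegativity of $\sum_{i\ge j}2^{-(i+j)}[\alpha^i\beta^j]\breve\gamma_t$. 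Your convolution computation unpacks exactly this quantity: I checked that $\mu_1(m)=2^{m-2}$ for $m\le1$, that the inner sums $1,\tfrac12,2^{j-1}$ are right, and that the resulting identity $c_t-\lim_Kc_{2^Kt-1}=\tfrac12\bigl(\mu_t(0)-\sum_{j\ge1}\mu_t(-j)2^{-j}\bigr)$ holds, as does the integral form with kernel $\frac{3(1-\cos\vartheta)\realpart\gamma_t+\sin\vartheta\,\imagpart\gamma_t}{5-4\cos\vartheta}$. Your sharpness check at $t=5$ (where $\mu_5(-j)=\tfrac38\cdot2^{-j}$ forces equality, and $\mu_5(-1)>\mu_5(0)$ kills the naive pointwise bound) is correct and consistent with the paper's numerical remark that equality is attained at $t=5$ and $t=107$.

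The one thing to flag is that your reduction, while more explicit than the paper's $\breve c_t\ge0$, is mathematically equivalent to it and does not advance the problem: the weighted left-tail inequality $\mu_t(0)\ge\sum_{j\ge1}\mu_t(-j)2^{-j}$ \emph{is} the conjecture, merely rewritten. Your Routes A and B are plausible directions (Route B is essentially the strategy the paper advocates via Corollary~\ref{cor_c_explicit} and the differences $\mathcal D_\ell$), but neither is carried out. So: correct preliminary work, an honest and accurate diagnosis of why the statement is hard, but no proof --- which is the appropriate state of affairs for a statement the authors themselves only conjecture.
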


The relevance of this conjecture can be explained appealing to~\cite{SpiegelhoferWallner2023}, which says in particular that $c_t>1/2$ as soon as $t$ contains sufficiently many blocks of $\tL$s.
Repeating the process $t\mapsto 2^Kt-1$,
Conjecture~\eqref{con_convolution} yields an integer having arbitrary many blocks of $\tL$s, and a smaller (or equal) $c_t$-value.
Theorem~1.1 in~\cite{SpiegelhoferWallner2023} then implies $c_t>1/2$.
Numerically, equality in~\eqref{eqn_conj_appending111} is attained for $t=5$ and $t=107$, and the remaining odd integers up to $2^{20}$ yield strict inequality.
Note that this conjecture concerns characteristic functions only differing by a factor
$\beta/(1-\alpha)$.
More precisely, 
using~\eqref{eqn_gamma_recurrence}, we have
\[
\begin{aligned}
\gamma_{2^Kt-1}(\alpha,\beta)&=\beta\gamma_t(\alpha,\beta)
+\alpha\gamma_{2^{K-1}t-1}(\alpha,\beta)=\cdots\\
&=\gamma_\alpha,\beta)\beta\sum_{0\leq \ell<K}\alpha^\ell
+\alpha^K\gamma_{t-1}(\alpha,\beta)\\
&=
\gamma_t(\alpha,\beta)\frac{\beta}{1-\alpha}
+\frac{-\beta^K}{1-\alpha}\gamma_t(\alpha,\beta)
+\alpha^K\gamma_{t-1}(\alpha,\beta),
\end{aligned}
\]
which converges to the bivariate power series $\gamma_t(\alpha,\beta)\frac{\beta}{1-\alpha}$ as $K\to\infty$. That is,
\[\gamma_t-\lim_{K\to\infty}\gamma_{2^Kt-1}=\breve\gamma_t,
\]
where
\begin{equation*}
\breve\gamma_t(\alpha,\beta)\eqdef
\frac{1-\alpha-\beta}{1-\alpha}\gamma_t(\alpha,\beta).
\end{equation*}
The quantity $\breve\gamma_t$ obviously satisfies the same recurrence as $\gamma_t$ itself (starting from a different power series).
In order to establish the sufficient condition~\eqref{eqn_conj_appending111}, it suffices to prove the simplified inequality
\begin{equation*}
\breve c_t\ge0,
\end{equation*}
where
\begin{equation*}
\breve c_t\eqdef
\sum_{0\leq j\leq i}
2^{-(i+j)}\bigl[\alpha^i\beta^j\bigr]
\breve\gamma_t(\alpha,\beta).
\end{equation*}
In this vein, future progress towards the solution of Cusick's conjecture might be obtained.

\begin{remark}
Similarly, ``appending $\tO\tL^\infty$'' to the right of the binary expansion seems to decrease the value $c_t$.
In terms of blocks $\bfk,\bfl \in \NI^{N}$, the statement
\begin{equation}\label{eqn_append}
c_{\bfk\ast\bfl} >  c_{(\infty,\bfk)\ast (1,\bfl)}
\end{equation}
appears to be true.
More concretely, for $t \in \N$ we expect $c_t > \lim_{k \to \infty} c_{2^k t +2^{k-1}-1}$.
We numerically checked the inequality $c_t>c_{2^{41}t+2^{40}-1}$ for all odd natural numbers $t<2^{20}$, which gives a strong indication that this holds true indefinitely.
\end{remark}
We now show that a special case of~\eqref{eqn_append} holds,
namely the case $\bft = \bft_N =  (\infty)^N \ast ((1)^{N-1},\infty)$, as denoted in the previous subsection.
(That is, ``infinite blocks of $\tL$s separated by isolated $\tO$s''.)
As a byproduct, we obtain $c_{\bft_N} \to 1/2$ when $N \to \infty$ (which follows from~\cite[Equation~2.30]{SpiegelhoferWallner2023}), a necessary condition for $c_{\bft_N}$ to be a lower bound. 

\begin{proposition}
    The sequence $(c_{\bft_N})_{N \geq 1}$ is strictly decreasing and 
    $$\lim_{N \to \infty} c_{\bft_N} = \frac{1}{2}.$$
\end{proposition}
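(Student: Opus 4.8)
The plan is to deduce both statements from the closed forms established for $\bft_N$ in Proposition~\ref{prop:c_infty}: the product representation $\gamma_{\bft_N}(\alpha,\beta)=\frac{\alpha}{1-\beta}\bigl(\frac{\beta}{1-\alpha}\bigr)^N$, and the exact value~\eqref{eq:c_infty} of $c_{\bft_N}$.

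For the limit I would write $c_{\bft_N}-\tfrac12=A_N-B_N$, where $A_N=4^{-N}\binom{2N-2}{N-1}$ and $B_N=12^{-N}\sum_{0\le j\le N-1}\binom{2N-1}{j}3^j$. Here $B_N\ge0$, while the lower bound already proved in Proposition~\ref{prop:c_infty} gives $c_{\bft_N}-\tfrac12>0$; hence $0<A_N-B_N\le A_N$. Rewriting $A_N=\tfrac14\cdot 4^{-(N-1)}\binom{2(N-1)}{N-1}$ and invoking the central binomial estimate of Lemma~\ref{lem:central_binomial_bounds} yields $A_N<\tfrac14\bigl(\pi(N-1)\bigr)^{-1/2}\to0$. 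A squeeze then forces $c_{\bft_N}\to\tfrac12$.

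For strict monotonicity the key point is that $\bft_{N+1}$ is obtained from $\bft_N$ by the appending operation discussed around~\eqref{eqn_append}: the product representation gives $\gamma_{\bft_{N+1}}=\frac{\beta}{1-\alpha}\gamma_{\bft_N}$, so that
\[
\gamma_{\bft_N}-\gamma_{\bft_{N+1}}=\Bigl(1-\frac{\beta}{1-\alpha}\Bigr)\gamma_{\bft_N}=\frac{1-\alpha-\beta}{1-\alpha}\,\gamma_{\bft_N}=\breve\gamma_{\bft_N}.
\]
Since $c_{\bft}$ depends linearly on $\gamma_{\bft}$, this identity reads $c_{\bft_N}-c_{\bft_{N+1}}=\breve c_{\bft_N}$, so the assertion is precisely the special case of~\eqref{eqn_append} (equivalently, of Conjecture~\ref{con_convolution}) announced before the proposition. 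To prove $\breve c_{\bft_N}>0$ I would expand $\breve\gamma_{\bft_N}=\frac{\alpha\beta^N(1-\alpha-\beta)}{(1-\beta)(1-\alpha)^{N+1}}$: using $[\alpha^i\beta^j]\gamma_{\bft_N}=\binom{N+i-2}{i-1}$ for $i\ge1,\ j\ge N$ and Pascal's rule, its coefficient of $\alpha^i\beta^j$ equals $\binom{N+i-2}{i-1}$ when $j=N$ and $-\binom{N+i-2}{i-2}$ when $j\ge N+1$ (and $0$ otherwise). Weighting by $2^{-(i+j)}$, summing over $i\ge j$, and evaluating the inner geometric sums collapses $\breve c_{\bft_N}$ to the single inequality
\[
\frac12\Bigl(\tfrac43\Bigr)^N-\frac12\sum_{0\le r\le N-2}\frac{1}{4^r}\binom{N+r-1}{r}>\frac{1}{4^{N-1}}\binom{2N-2}{N-2},
\]
whose left-hand side is a tail of the series for $(4/3)^N$ and can be put in closed form by identity~\eqref{eq:identity1} of Lemma~\ref{lem:binomial_identities}.

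The hard part is establishing this last inequality uniformly in $N$. I expect it to be the main obstacle: after inserting the closed form for the truncated sum one must compare an expression built from $(4/3)^N$ against a central binomial coefficient of order $\asymp 4^{N}/\sqrt N$, and the comparison is tight enough that the error terms in~\eqref{eq:identity1} have to be controlled by the two-sided bounds of Lemma~\ref{lem:central_binomial_bounds}. As in the proof of Proposition~\ref{prop:c_infty}, I anticipate that such an asymptotic argument settles all large $N$, leaving a handful of small cases to be verified directly.
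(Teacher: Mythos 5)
Your treatment of the limit is correct and is essentially the paper's: from \eqref{eq:c_infty} one gets $0<c_{\bft_N}-\tfrac12\le 4^{-N}\binom{2N-2}{N-1}$, and Lemma~\ref{lem:central_binomial_bounds} forces this to $0$. The gap is in the monotonicity half. Your reduction $c_{\bft_N}-c_{\bft_{N+1}}=\breve c_{\bft_N}$ via $\gamma_{\bft_{N+1}}=\frac{\beta}{1-\alpha}\gamma_{\bft_N}$ is legitimate (termwise manipulation is justified by the absolute convergence from Lemma~\ref{lem_LR_splitting}), but the argument then terminates at an inequality you explicitly do not prove: you call it ``the main obstacle'' and only ``anticipate'' that an asymptotic argument will handle large $N$. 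Since the positivity of that final expression \emph{is} the assertion that the sequence is strictly decreasing, the claim remains unproven. The inequality is also genuinely delicate: after closing the truncated sum, the positive side beats the central-binomial side only by a relative margin of order $1/N$, so the two-sided bounds of Lemma~\ref{lem:central_binomial_bounds} are not obviously sharp enough on their own. I was moreover unable to confirm your ``collapsed'' form: expanding $\breve\gamma_{\bft_N}=\frac{\alpha\beta^N}{(1-\alpha)^N}-\frac{\alpha^2\beta^{N+1}}{(1-\beta)(1-\alpha)^{N+1}}$ I obtain $\binom{N+r}{r}$ rather than $\binom{N+r-1}{r}$ in the truncated sum and $\bigl(1-\tfrac1N\bigr)4^{-N}\binom{2N-2}{N-1}$ on the right, so the reduction itself needs rechecking.

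The paper takes a different and self-contained route: it differences the exact formula \eqref{eq:c_infty} directly, writing $c_{\bft_N}-c_{\bft_{N+1}}$ as $\frac{1}{4^N\cdot 2N}\binom{2N-2}{N-1}+\frac{1}{12^{N+1}}-\frac{1}{12^N}\sum_{0\le j\le N-1}3^j\bigl(\binom{2N-1}{j}-\frac14\binom{2N+1}{j+1}\bigr)$, and then uses the termwise bound $\binom{2N-1}{j}-\frac14\binom{2N+1}{j+1}=\bigl(1-\frac{N(N+1/2)}{(j+1)(2N-j)}\bigr)\binom{2N-1}{j}<\frac{1}{2N+1}\binom{2N-1}{j}$, which follows from maximizing $(x+1)(2N-x)$ at $x=N-1/2$. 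Bounding the remaining sum by a geometric progression then yields strict positivity with room to spare. To salvage your route you would need an analogous termwise comparison inside your reduced inequality; as written, the monotonicity statement is not established.
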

\begin{proof}
    Using the formula \eqref{eq:c_infty}, we get
    $$ c_{\bft_N}- c_{\bft_{N+1}}= \frac{1}{4^N \cdot 2N} \binom{2N-2}{N-1} + \frac{1}{12^{N+1}} - \frac{1}{12^N} \sum_{0 \leq j \leq N-1} 3^j \left(\binom{2N-1}{j}- \frac{1}{4} \binom{2N+1}{j+1}\right).  $$
  We have
 $$       
    \binom{2N-1}{j} - \frac{1}{4} \binom{2N+1}{j+1} = \left(1-\frac{N(N+1/2)}{(j+1)(2N-j)} \right) \binom{2N-1}{j}  <  \frac{1}{2N+1}  \binom{2N-1}{j},
  $$
  where the last inequality follows from the fact that $(x+1)(2N-x)$ attains maximal value at $x=N-1/2$.
    This implies 
   \begin{align*}
       \sum_{0 \leq j \leq N-1} 3^j \left(\binom{2N-1}{j}- \frac{1}{4} \binom{2N+1}{j+1} \right) &<  \frac{3^N-1}{2(2N+1)} \binom{2N-1}{N-1} < \frac{3^N}{2N} \binom{2N-2}{N-1},
   \end{align*} 
   and the inequality $c_{\bft_N}> c_{\bft_{N+1}}$ follows.

   Passing to the limit, \eqref{eq:c_infty} implies
   $$ c_{\bft_N} < \frac{1}{2} +\frac{1}{4^N} \binom{2N-2}{N-1}.$$
   An application of Lemma \ref{lem:central_binomial_bounds} finishes the proof.
\end{proof}

\bibliographystyle{siam}
\bibliography{E}

\bigskip
%Bartosz Sobolewski
\begin{center}
\begin{tabular}{c}
Technical University of Leoben,\\[0.5mm]
Franz-Josef-Strasse 18, 8700 Leoben, Austria, and\\[0.5mm]
Jagiellonian University,\\
Krak\'ow, Poland\\[0.5mm]
bartosz.sobolewski@uj.edu.pl\\[0.5mm]
ORCID iD: \texttt{0000-0002-4911-0062}
\end{tabular}
\end{center}

\smallskip
%Lukas Spiegelhofer
\begin{center}
\begin{tabular}{c}
Technical University of Leoben,\\[0.5mm]
Franz-Josef-Strasse 18, 8700 Leoben, Austria\\[0.5mm]
lukas.spiegelhofer@unileoben.ac.at\\[0.5mm]
ORCID iD: \texttt{0000-0003-3552-603X}
\end{tabular}
\end{center}\end{document}